\def\Reals{\mathbb{R}} % real numbers
\def\E{\mathbb{E}} % expectation
\def\cN{{\mathcal N}} % multivariate Gaussian
\def\bd#1{\boldsymbol{#1}}
\def\argmin{\operatornamewithlimits{arg\,min}}
\title{Alternating Minimization Algorithm with Automatic Relevance Determination for  Transmission Tomography under Poisson Noise}
\author{Yan~Kaganovsky{\thanks{Department of Electrical and Computer Engineering, Duke University, Durham, NC 27708, USA.}},
        Shaobo~Han{\footnotemark[1]},
        Soysal~Degirmenci{\thanks{Department of Electrical and Systems Engineering, Washington University in St. Louis, MO 63130, USA.}},
        David~G.~Politte{\thanks{Mallinckrodt Institute of Radiology, Washington University in St. Louis, MO 63110, USA.}}, David J. Brady\footnotemark[1],
        Joseph~A.~O'Sullivan\footnotemark[2],
        and~Lawrence~Carin\footnotemark[1]. % <-this % stops a space
\footnote{Emails: yankagan@gmail.com, shaobohan@gmail.com, s.degirmenci@wustl.edu, dgpolitte@gmail.com, dbrady@duke.edu, jodyosullivan@gmail.com,  lcarin@duke.edu.}}
\begin{document}

\maketitle
\newcommand{\slugmaster}{%
\slugger{}{}{}{}{}}

\begin{abstract}
We propose a globally convergent alternating minimization (AM) algorithm for image reconstruction in transmission tomography, which extends automatic relevance determination (ARD) to Poisson noise models with Beer's law.  The algorithm promotes solutions that are sparse in the pixel/voxel-differences domain by introducing additional latent variables, one for each pixel/voxel, and then learning these variables from the data using a hierarchical Bayesian model.
Importantly, the proposed AM algorithm is \emph{free of any tuning parameters} with image quality comparable to standard penalized likelihood methods.
Our algorithm exploits optimization transfer principles which reduce the problem into parallel 1D optimization tasks (one for each pixel/voxel), making the algorithm feasible for large-scale problems;   This approach considerably reduces the computational bottleneck of ARD associated with the posterior variances.  Positivity constraints inherent in  transmission tomography problems are also enforced. We demonstrate the performance of the proposed algorithm for x-ray computed tomography using synthetic and real-world datasets. The algorithm is shown to have much better performance than prior ARD algorithms based on approximate Gaussian noise models, even for high photon flux.
Sample code is available at \url{http://www.yan-kaganovsky.com/#!code/c24bp}.
\end{abstract}  

\begin{keywords}
Alternating minimization, Automatic relevance determination, Transmission tomography, Poisson noise, X-ray CT, Optimization Transfer. 
\end{keywords}

%\begin{AMS}\end{AMS}

\pagestyle{myheadings}
\thispagestyle{plain}
\markboth{AM Algorithm with ARD for Transmission Tomography (Preprint Version)}{Y. Kaganovsky et al. (Preprint Version)}

\section{Introduction} \label{sec_intro}

Tomographic image reconstruction is the process of estimating an object from measurements of its line integrals along different angles \cite{slaney1988principles}. 
Tomographic reconstruction is an ill-posed problem \cite{natterer1986computerized} in the sense that there are multiple solutions that are consistent with the data. 
It is therefore desirable to make use of prior knowledge, which includes the statistical characteristics of the measured data, and properties that are expected of the image.   

The images in transmission tomography represent attenuation per unit length of the impinging beam due to energy absorption inside the imaged object. 
Well known examples are x-ray computed tomography (CT) \cite{kalender2006}, and electron tomography \cite{frank1992electron}. The attenuation must be nonnegative, since a negative value corresponds to an increase in the intensity of the exiting beam, which is non-physical.  These images can often be well approximated by a sparse representation in some transform domain. In some applications, sparsity is present directly in the native image domain \cite{li2002}, but more commonly it is present in the pixel-difference domain \cite{sonka2000handbook} or in some transform domain, such as the wavelet, curvelet or shearlet transforms \cite{simoncelli1999modeling,boubchir2005}.  

%\footnote{Here we also include the more realistic situation in which the object is not exactly sparse, but rather compressible, i.e., most of the coefficients in some representation are not exactly zero but are negligibly small.}

There are generally two types of approaches for tomographic image reconstruction. The first type consists of one-shot algorithms, such as filtered back-projection \cite{slaney1988principles} and its extensions, which rely on analytic formulas. These algorithms do not incorporate the type of prior knowledge mentioned above and also produce prominent artifacts when some of the data are missing. The second type consists of iterative algorithms which are based on minimizing some objective function. The latter enable the incorporation of prior knowledge about the data and the image of interest, e.g., the objective function can put less emphasis on measurements that are expected to be more noisy, include penalties that promote sparsity of the image in some representation, and positivity constraints for the image can also be included. In addition, iterative algorithms are far more robust to missing data.

Many transmission tomography problems involve physical processes consisting of quanta, e.g., the number of transmitted photons in x-ray CT, or the number of transmitted electrons in electron tomography. Therefore, there is a strong physical motivation for modeling these processes using independent Poisson random variables \cite{frank1992electron,sonka2000handbook}.  In the medical CT community, the Poisson noise model is commonly used \cite{sonka2000handbook} and provides the basis for a well known class of iterative algorithms  \cite{sonka2000handbook,lange,lange1995globally,Erdogan,OSullivan}. In addition, the mean of each measurement is modeled according to Beer's law \cite{sonka2000handbook}, which is an inherent non-linearity in transmission data.

In this paper, we develop a new class of statistical iterative algorithms for image reconstruction in transmission tomography that is inspired by automatic relevance determination (ARD) \cite{neal1995bayesian, tipping2001sparse, wipf2011latent,Danniel}. It incorporates prior knowledge that includes Poisson statistics of the underlying signals, Beer's law, positivity of the image, and also sparsity of the image in some underlying representation. What sets this new class apart from prior art is the fact that it \emph{automatically learns the balance between data-fidelity and prior knowledge}, thus avoiding the use of any tuning parameters and the difficulty in determining them. In addition, it also computes posterior variances, which can be used for adaptive sensing/experimental design, or for determining the system's sensitivity to noise.  To the best of our knowledge, the proposed algorithm is the first ARD-based algorithm for Poisson noise that is both scalable to very large problems and that is also guaranteed to converge. 

Our work is motivated by x-ray CT which is used in medical and security applications \cite{slaney1988principles}. Nevertheless, the methods presented here can be applied more generally to any type of transmission
tomography that involves count data. In formulating the problem presented in Sec.~\ref{model} we have tried to retain some generality in the hope that other imaging modalities could potentially benefit from the proposed approach. Naturally, this entails some simplifications of the forward model which are discussed later in Sec.~\ref{limits}.

It should be noted that most modern commercial CT scanners do not employ photon counting detectors. However, there are already photon counting detectors used in clinical studies \cite{Photon_Count}, and it is certainty possible that they will be integrated into commercial CT scanners in the near future.
Modern CT scanners also utilize a high photon flux, which led various researchers to propose algorithms that are based on an approximate post-log Gaussian noise model \cite{bouman1996,ramani2012splitting,sukovic2000penalized} rather than a Poisson noise model. 
The Gaussian model can be derived by making a 2nd order Taylor series approximation to the log-likelihood corresponding to the Poisson noise model with Beer's law, under the assumption of moderate to high photon flux. This results in a data-weighted least-squares datafit term \cite{bouman1996}, which also corresponds to a post-log Gaussian noise model. 
However, in low-count scenarios, which are critical to maintain low radiation dose in pediatric and small animal CT, there could be a substantial performance benefit in using the Poisson model once photon-counting detectors become widely available. For example, it has been shown empirically that the approximate Gaussian model leads to a systematic negative bias that increases as photon flux decreases \cite{fessler1995hybrid}. 
%Another example for low-count measurements is in security applications, where strong metal attenuators such as weapons might be present.}
%This further motivates the use of the proposed method even beyond the traditional arguments for Poisson-based algorithms that rely on low photon flux arguments.

\subsection{Model} \label{model}
We consider a Poisson noise model for transmission tomography \cite{sonka2000handbook} given by 
\begin{align}
p(\bd{y}|\bd{x})=\prod_{i=1}^{n}p(y_{i}|\bd{x})=\prod_{i=1}^{n}\text{Poisson}[\eta_i\exp(-\bd{\phi}^T_{i}\bd{x})], \label{poisson}
\end{align}
where here and henceforth $p(\bd{y}|\bd{x})$ denotes the likelihood function, $\bd{y}\in \mathbb{Z}^n$ contains the measurements, $\text{Poisson}[\lambda]$ denotes the Poisson univariate probability distribution with rate $\lambda$, and the exponential function encompasses Beer's law (a physical property of the signals \cite{sonka2000handbook}). The column vector $\bd{x} \in \Reals_+^{p\times 1}$ denotes the lexicographical ordering of the 2D/3D image to be reconstructed, where the entry $x_j$ is the linear attenuation coefficient at the $j$th pixel/voxel in units of inverse length. 
The operation $\bd{\phi}_i^T\bd{x}$ represents a line integral over the image corresponding to the $i$th source-detector pair (see Fig.~\ref{fig_geo}) and to the measurement $y_i$.  
We denote by $\bd{\Phi} \in \Reals_+^{n\times p}$ the system matrix that is formed by concatenating the row vectors $\bd{\phi}^T_i\in\Reals_+^{1\times p}$ ($i=1,2,...,n$). 
$\eta_i$ is the mean number of photons that would have been measured using the $i$th source-detector pair if the imaged object was removed. We make the standard assumption that $\eta_i$ was measured for all $i$ in an independent calibration experiment and is known to us. 

We construct $\bd{\Phi}$ using the approach in \cite{zhuang1994numerical}, where an entry $\phi_{ij}$ at the $i$th row and $j$th column of $\bd{\Phi}$ is equal to the length at which the line between the $i$th source-detector pair intersects the $j$th pixel/voxel (see Fig.~\ref{fig_geo}).
Each ray transverses at most $O(p^{1/D})$ pixels/voxels, where $p$ is the total number of pixels/voxels and $D=2,3$ for 2D/3D images, respectively. Accordingly, each row $\bd{\phi}^T_i$ in the system matrix has only $O(p^{1/D})$ non-zero entries. Henceforth, we absorb a reference linear attenuation coefficient into $\bd{\Phi}$ so that $\bd{x}$ is dimensionless.

\begin{figure}
\centering
\begin{minipage}{.45\textwidth}
\centering
\includegraphics[width=4cm]{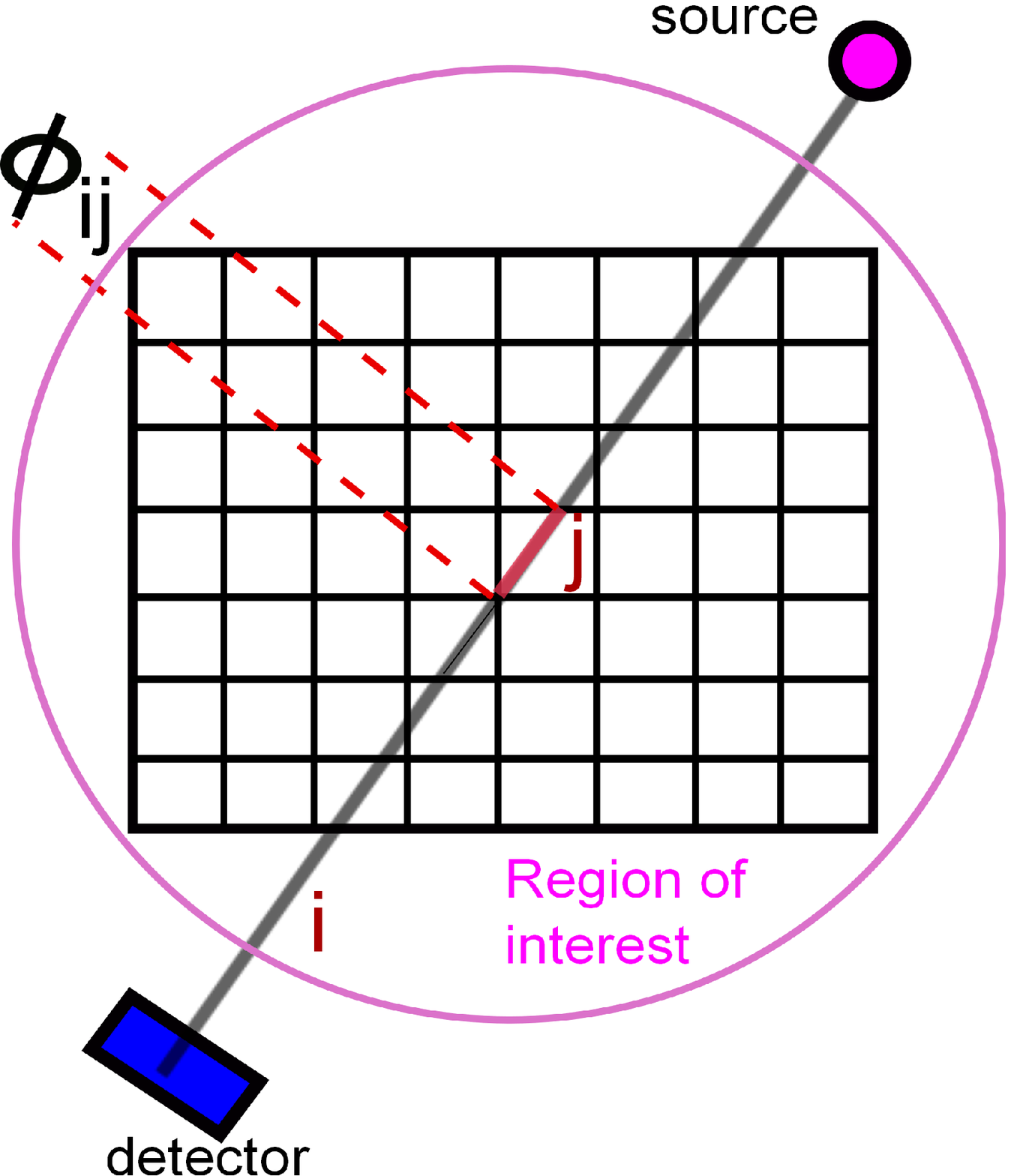}
\caption[]{Tomographic measurements in 2D. The total linear attenuation for the $i$th measurement is equal to the line integral over the imaged domain along the path defined by the $i$th source-detector pair. Several measurements are collected simultaneously using several detectors. The source and detectors are rotated to collect different views. Each line integral is modeled as $\bd{\phi}_i^T\bd{x}=\sum_j \phi_{ij}x_j$ where $x_j$ is the linear attenuation coefficient at the $j$th pixel in units of inverse length and $\phi_{ij}$ is the intersection length of the $i$th line with the $j$th pixel. If $p$ is the total number of pixels, then each line intersects at most $O(p^{1/2})$ pixels. }
\label{fig_geo}
\end{minipage}%
\hspace{0.2cm}
\begin{minipage}{.45\textwidth}
\centering
\includegraphics[width=3.5cm]{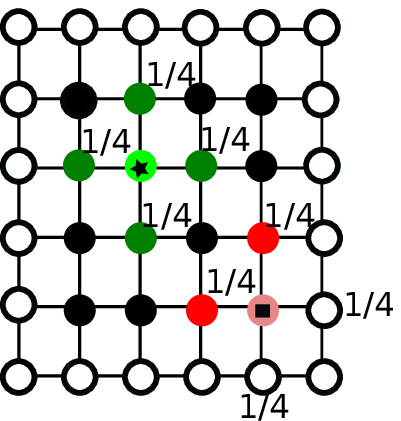}
\caption[]{An example for an adjacency graph defining a neighborhood $\Theta$ in \eqref{psi_diff}. Each node represents a pixel in the image and $\{j,k\}\in\Theta$ iff the $j$th and $k$th nodes are connected by an edge. In this example, the node colored in light green and marked by a star has 4 neighboring pixels shown in dark green. Each neighbor has a weight of $1/4$ when computing their average. The image is augmented to introduce a boundary of pixels with zero values, represented by white nodes. The latter are not included in the forward model and are merely introduced to provide the interpretation for the $1/4$ weight when averaging less than $4$ neighbors, e.g., consider the node colored in light red and marked by a square.} 
\label{fig_graph}
\end{minipage}
\end{figure}

\subsection{Background}
The method proposed in this paper is an extension of automatic relevance determination (ARD). For the convenience of readers unfamiliar with ARD we first provide background to explain the potential advantages of ARD motivating this work and discuss the key differences between ARD and other more common sparse estimation methods. Our aim here is to emphasize previous challenges in adapting ARD to the Poisson model in \eqref{poisson}, challenges which we aim to resolve with the proposed method.
\subsubsection{Maximum a Posteriori (MAP) Estimation} \label{typeI}
Statistical iterative methods for CT \cite{sonka2000handbook,lange,lange1995globally,OSullivan} cast the problem of image reconstruction as a penalized maximum likelihood estimation problem, given by
\begin{align}
\bd{x}^*=\arg\max_{\bd{x}} [\log p(\bd{y}|\bd{x})+ \log p(\bd{x}|\beta)], \label{type_I}
\end{align}
where $\log p(\bd{y}|\bd{x})$ is the log-likelihood function which represents the datafit. 
For the Poisson model in \eqref{poisson}, maximizing the log-likelihood is equivalent to minimizing the I-divergence \footnote{I-divergence between two vectors $\bd{p},\bd{q}\in\Reals_+^M$ is defined as $I(\bd{p}||\bd{q})=\sum_{j=1}^M p_j\log(\frac{p_j}{q_j})-p_j+q_j$ \cite{cizar}} between the actual measurements $\bd{y}$ and the predicted measurements $\bd{\eta}\odot\exp(-\bd{\Phi}\bd{x})$ \cite{OSullivan}, where $\odot$ denotes an elementwise product. 
$p(\bd{x}|\beta)$ in \eqref{type_I} is the prior probability distribution representing prior knowledge about $\bd{x}$ and $\log p(\bd{x}|\beta)$ is a penalty that promotes solutions with desired properties according to this prior knowledge, e.g., the $\ell_1$ penalty \cite{li2002}, or the total variation (TV) penalty \cite{sidky2,sidky}, which promote sparse solutions. $\beta$ is a tuning parameter that determines the tradeoff between the datafit and the prior, or the bias-variance tradeoff. The ``best'' value of $\beta$ is usually unknown and depends on the imaged object at hand.  To determine $\beta$, it is often required to solve \eqref{type_I} many times for different values of $\beta$, examine all reconstructed images and then select the value that best fits the needs of the application, based on some chosen figure of merit.  For many common choices of $p(\bd{x}|\beta)$  considered below, decreasing $\beta$ will put more weight on the datafit term and will result in increased variance and in noisier images. Increasing $\beta$ will put more weight on the penalty term, and will result in increased bias and increased artifacts in the image, e.g., staircase artifacts.  For a given $\beta$, the solution in \eqref{type_I} can be interpreted as the maximum a posteriori (MAP) solution, i.e., the maximizing point of the posterior distribution, which is given by Bayes' theorem
\begin{align}
p(\bd{x}|\bd{y})=p(\bd{y}|\bd{x})p(\bd{x})/p(\bd{y}). \label{Bayes}
\end{align}
The $\ell_1$ and TV penalties are non-differentiable and do not allow the use of standard gradient-based methods for solving \eqref{type_I}. An alternative to TV is the class of neighborhood roughness penalties (Gibbs priors) \cite{lange1995globally} which have the following form 
\begin{align}
 -\log p(\bd{x}|\beta)=\beta\sum_{\{j,k\} \in \Theta} w_{jk} \pi (x_j-x_k), \label{Gibbs}
\end{align}\\[-1em]
where $w_{jk}$ is a positive weight inversely proportional to the distance between the centers of the $j$th and $k$th pixels/voxels, $\Theta$ is a set of ordered pairs $\{j,k\}$ defining a neighborhood system and $\pi(x)$ is chosen to be a positive, even, and twice continuously differentiable function.  To preserve sharp edges in the image, $\pi(x)$ is often chosen such that $\pi(x) \propto |x|$ for large $|x|$ and can be viewed as a smooth version of the $\ell_1$ penalty. A common choice is \cite{holland1977robust,lange1995globally}
\begin{align}
\pi(x)=\delta^2[|x/\delta|-\log(1+|x/\delta|)] \label{Huber},
\end{align}
which approaches the quadratic $x^2$ for $\delta\rightarrow\infty$ and the linear function $\delta|x|$ as $\delta\rightarrow 0$. This type of penalties allows the use of standard gradient-based methods but comes with the price of having an additional tuning parameter $\delta$, so the search for the tuning parameters becomes even more cumbersome. Decreasing $\delta$ too much will result in noisy images but with sharp edges, and increasing $\delta$ too much will result in blurred images. 
%The penalty given by \eqref{Gibbs}--\eqref{Huber} allows to preserve smooth transitions at edges and in some cases leads to a more realistic image than the one obtained using the TV penalty. 

\subsubsection{Automatic Relevance Determination (ARD)} \label{ARD}
A different approach to sparse estimation is ``automatic relevance determination'' (ARD) \cite{neal1995bayesian}. Originally, ARD has been proposed for \emph{Gaussian} likelihoods with the prior \cite{tipping2001sparse}   
\begin{align}
 p(\bd{x}|\bd{\gamma})=\cN(\bd{x}|\bd{0},\bd{\Gamma}), \qquad \bd{\Gamma}=\text{Diag}(\bd{\gamma}),  \label{prior_ARD}
\end{align}
where $\text{Diag}(\bd{\gamma})$ denotes a diagonal matrix with main diagonal $\bd{\gamma}$. In this approach sparsity is promoted by treating $\bd{x}$ as latent variables and seeking the newly introduced \emph{hyperparameters} $\bd{\gamma}\in \mathbb{R}_+ ^p$ that maximize the marginal likelihood $p(\bd{y}|\bd{\gamma})$ (also called the ``model evidence'' \cite{wipf2011latent})
\begin{align}
\bd{\gamma}^*=\max_{\bd{\gamma}}\log p(\bd{y}|\bd{\gamma}). \label{evidence_max}
\end{align}
Once $\bd{\gamma}^*$ is found, the estimate for $\bd{x}$ is given by the mean of the posterior distribution $p(\bd{x}|\bd{y},\bd{\gamma}^*)$. Interestingly, the solutions to \eqref{evidence_max} have many $\gamma_j\approx 0$ and the posterior distribution for the corresponding parameters $x_j$ becomes highly concentrated around zero\footnote{By employing Bayes' rule in \eqref{Bayes}, it can be easily shown that if the prior is concentrated around $x_i=0$ then the corresponding posterior distribution is also concentrated around $x_i=0$}. The mechanism that promotes sparsity of $\bd{\gamma}^*$ has been revealed by Wipf et al.  \cite{wipf2004perspectives,Wipf2008}, with the analysis specifically tailored for Gaussian likelihoods. 

The solution of \eqref{evidence_max} was originally addressed by Tipping in \cite{tipping2001sparse} using an expectation-maximization (EM) algorithm \cite{dempster1977maximum}.  In the E-step, the Gaussian posterior probability distribution $p(\bd{x}|\bd{y},\bd{\gamma})$ is computed for given hyperparameters $\bd{\gamma}$. In the M-step, $\bd{\gamma}$ are updated based on the mean and variances of the posterior distribution found in the E-step. 
As opposed to MAP estimation methods, which focus on the mode (maximum) of the posterior distribution, ARD methods also compute the covariance matrix (during the E step), or at least the marginal variances, which provide additional information. ARD-based algorithms are therefore computationally more demanding than MAP methods, mainly due to the expensive task of estimating the variances. Previous extensions of ARD  \cite{Wipf2008,seeger2011large} require the same type of operations and rely on approximations to the  variances \cite{seeger2011large} to maintain scalability. 

Despite computational complications, the ARD approach can be quite rewarding and excel in hard or ill-posed problems, as evidenced in several recent applications \cite{seeger_mag}.  A very significant advantage of ARD methods is their ability to \emph{automatically learn the balance between the datafit and the degree of sparsity}. This avoids the tuning of nuisance parameters, which is often nonintuitive, object-dependent, and requires many trials.  The posterior variances are key in determining this tradeoff. The variances can also be used for adaptive sensing/experimental design \cite{Danniel,seeger2011large}, which is outside the scope of this paper and will be the subject of future work.

\subsubsection{Difficulties in Previous ARD-based Methods for Poisson Models} \label{prev_ARD}
The ARD-EM algorithm \cite{tipping2001sparse} described in Sec.~\ref{ARD}  cannot be directly applied to the Poisson likelihood in \eqref{poisson} since the E-step is intractable due to the non-conjugacy\footnote{A prior is said to be conjugate to the likelihood if both the prior and posterior probability distributions are in the same family. The posterior is then given in closed-form.} of the prior in \eqref{prior_ARD} and the likelihood. 
Although we are not aware of any previously published work on ARD for Poisson models with Beer's law, a simple remedy to the non-conjugacy is the Laplace method, discussed in \cite{tipping2001sparse}. In that approach, the posterior is approximated during the E-step using a multivariate Gaussian distribution, with the mean equal to the MAP solution and the covariance matrix equal to the inverse of the Hessian matrix for the log-likelihood, evaluated at the MAP solution.  However, the resulting ``EM-like'' algorithm  is not based upon a consistent objective function, \emph{and is not guaranteed to converge}. In addition, it requires inversions of the Hessian matrix \emph{with computational complexity $O(p^3)$}, where $p$ is the total number of pixels/voxels. This is infeasible for x-ray CT, where $p$ could be very large, e.g., $p=10^8$ or higher. Instead, one can use more sophisticated methods \cite{seeger2011large,bekas2007estimator, Papandreou2011} to indirectly estimate the diagonal of the inverse, i.e., the marginal variances, since only these are required for updating the hyperparameters $\bd{\gamma}$ \cite{tipping2001sparse}. 
%Seeger and Nickisch     
% \cite{seeger2011large} use the Lanczos algorithm to construct a low-rank approximation to the covariance matrix, which allows efficient estimation of the marginal variances. Papandreou and Yuille \cite{Papandreou2011}  propose an unbiased stochastic estimator which uses conjugate gradient (CG) methods. Another approach is by Bekas et al. \cite{bekas2007estimator}, which also uses CG. 
Note however that the work in \cite{seeger2011large,bekas2007estimator, Papandreou2011} does not consider  Laplace approximations, so these methods have not been tested for this case.   
Irrespective of how the variances are computed, the ``EM-like'' algorithm with the Laplace approximation is unsatisfactory from a principled perspective due to the lack of an objective function and due to the lack of convergence guarantees.  The objective is also desired in order to assess convergence in practice and in order to verify the correctness of the implementation.  \emph{In addition, we are faced with conceptual difficulties, since the analysis that revealed the sparsity promoting mechanism of ARD by Wipf et al. \cite{wipf2011latent,wipf2004perspectives,Wipf2008} is restricted to Gaussian likelihoods.}
%Several interesting extensions of AR have been proposed by Wipf and Nagarajan  \cite{Wipf2008}, and by Wu and Wipf \cite{Wipf_dual}, however they are all limited to Gaussian likelihoods.  
%Seeger and Nickisch \cite{seeger2011large} propose an inference method for approximating the posterior corresponding to super-Gaussian \cite{seeger2011large} likelihoods and a Gaussian prior, however, the Poisson likelihood in \eqref{poisson} is not super-Gaussian, so this method cannot be used in this case. 

%In addition to the above computational difficulties,

%However, CG methods can lead to slow convergence for transmission tomography problems, due to the typically high condition number of the system matrix and the posterior precision matrix.  To prevent slow convergence, CG methods require careful choices of preconditioners \cite{saad_book} that are specifically matched to the problem at hand. For x-ray CT, a motivating application for our work, developing suitable preconditioners is challenging because the enormous dynamic range of transmission data causes the Hessian of the log-likelihood term to be highly shift-variant, preventing the use of standard preconditioners that rely on this invariance property \cite{fessler1999conjugate,ramani2012splitting}. 

\subsubsection{Previous ARD Method for Transmission Tomography Based on an Approximate Post-Log Gaussian Noise Model}
As mentioned in the introduction, it is reasonable to replace the model in \eqref{poisson} with an approximate post-log Gaussian noise model under moderate to high photon flux conditions.
An ARD algorithm for transmission tomography based on the Gaussian model and the EM algorithm in  \cite{tipping2001sparse} was proposed by Jeon et al. \cite{Danniel}.
It should be noted that the original ARD algorithm in \cite{tipping2001sparse} has a computational cost of $O(p^3)$ due to the computation of posterior variances, and therefore, it is not practical for large scale problems. In contrast, the algorithm proposed in \cite{Danniel} scales as $O(n p^{1/D})$ where $D=2,3$ for 2D and 3D images, respectively.
There are several disadvantages of the algorithm in \cite{Danniel}: (1) it does not enforce positivity of the solution; (2) due to the approximate estimation of posterior variances, the overall algorithm does not have convergence guarantees; (3) it requires one to choose the type and number of probing vectors  for estimating the variances \cite{Danniel}, which depends on the specific geometry of the system and on the imaged object; (4) in order to estimate the posterior variances it requires solving \emph{many} $p\times p$ matrix  equations, which is computationally expensive; (5) computing the objective function scales as $O(p^3)$, making this operation infeasible for large scale problems; (6) it is limited to proper priors and complete sparse representations.
%(5) to maintain scalability, it relies on the use of conjugate gradient solvers which can suffer from slow convergence when a proper preconditioner is not available; finding a good preconditioner is a challenging task in CT (see \cite{ramani2012splitting} and references therein);  
%\newpage
\subsection{Contributions} \label{cont}
The contributions of this paper are summarized as follows. 
\begin{enumerate}
\item We present a new alternating minimization (AM) algorithm for transmission tomography, which extends automatic relevance determination (ARD) to Poisson noise models with Beer's law. The proposed algorithm is free of any tuning parameters and unlike prior ARD methods, also has a consistent objective function that can be evaluated in a computationally efficient way, in order to assess convergence in practice and to verify the correctness of the implementation.
\item We reveal the sparsity-promoting mechanism in the proposed ARD-based algorithm for Poisson likelihoods. This preserves insights similar to those gained from previous work \cite{wipf2011latent,wipf2004perspectives,Wipf2008} for Gaussian likelihoods.    
\item We derive a \emph{scalable} AM algorithm for transmission tomography, that is well adapted to parallel computing. We reduce the computational complexity considerably by simplifying the algorithm into parallel 1D optimization tasks (for each pixel/voxel) with the scalar mean and variance updates also done in parallel. 
\item We present a convergence analysis for the scalable AM algorithm and establish guarantees for global convergence\footnote{An algorithm is said to be ``globally convergent'' if for \emph{arbitrary starting points} the algorithm is guaranteed to converge to a point satisfying the necessary optimality conditions (local optima included) \cite{Luenberger}.}. To the best of our knowledge, this is the first \emph{ARD-based} algorithm for \emph{Poisson} models that has convergence guarantees. 
\item We study the performance of the algorithm on synthetic and real-world x-ray datasets. 
\item We compare the proposed algorithm to the Gaussian ARD method in \cite{Danniel} and demonstrate that the proposed algorithm leads to better performance under Poisson distributed data, even for high photon flux.
\end{enumerate}

\subsection{Outline of the Paper}
The rest of this paper is organized as follows. In Sec.~\ref{AM_ARD} we present a general overview of the proposed AM algorithm and explain how sparsity is promoted. In Sec.~\ref{fast_VARD} we present a scalable AM algorithm based on separable surrogates for the objective function, which is well adapted to parallel computing and feasible for large-scale problems. In Sec.~\ref{convergence} we present a detailed analysis of the convergence properties of the algorithm presented in Sec.~\ref{fast_VARD}.  In Sec.~\ref{connections} we make connections between the proposed algorithm and related previous methods. We also present an alternative view of the proposed ARD method that further explains how sparsity is promoted. In Sec.~\ref{gen_AM} we extend the proposed ARD framework to allow the modeling of sparsity in \emph{overcomplete} representations and the use of improper priors, while retaining the convergence properties studied in Sec.~\ref{convergence}. 
In Secs.~\ref{limits}--\ref{alter} we discuss model limitations and possible alternative optimization methods related to our work. 
In Sec.~\ref{results} we study the performance of the proposed scalable ARD algorithm on synthetic and real-world datasets (with a parallel implementation used in the experiments) and compare it to prior MAP estimation methods as well as to the ARD method in \cite{Danniel} which is based on a post-log Gaussian noise model. Conclusions are presented in Sec.~\ref{conc}.
%Finally, we also explore in Sec.~\ref{results} the use of a Gaussian approximation within the proposed ARD framework and demonstrate a clear advantage to using the Poisson model. 

\section{Variational ARD}\label{AM_ARD}
%In this section we present the proposed variational ARD (VARD) framework, which is the extension of ARD to the Poisson model in \eqref{poisson}. In Sec.~\ref{VARD_prior} we define a more general prior than that in \eqref{prior_ARD}, which also allows modeling of sparsity in a transform domain and not only directly in the image domain. 
%In Sec.~\ref{ARD_EM} we utilize a known variational view of the EM algorithm for ARD with Gaussian likelihoods, which will facilitate the extension in Sec.~\ref{Pres_AM_VARD} to an alternating minimization algorithm for non-Gaussian models. We shall also clarify the principle differences between the proposed approach and ARD. 
\subsection{Prior} \label{VARD_prior}
In many applications of transmission tomography sparsity is not present directly in the native basis but in some transform domain. Therefore,  we consider a generalized prior (similar to Seeger and Nickisch \cite{seeger2011large}) to address these cases 
\begin{align}
 p(\bd{x}|\bd{\gamma})=\cN(\bd{x}|\bd{0},(\bd{\Psi}^T\bd{\Gamma}^{-1}\bd{\Psi})^{-1}), \qquad \bd{\Gamma}=\text{Diag}(\bd{\gamma}), \label{prior}
\end{align}
where $\bd{\gamma}\in \mathbb{R}_+ ^p$ are newly introduced hyperparameters, and we assume a transformation 
\begin{align}
\bd{s}=\bd{\Psi}\bd{x}, \label{sparse_rep}
\end{align} 
where $\bd{\Psi}$ is known and $\bd{s}$ is approximately sparse. By choosing $\bd{\Psi}=\bd{I}$, the prior in \eqref{prior} reduces to \eqref{prior_ARD}. 
To provide some insight into the mechanism of the proposed algorithm we assume for now that $\bd{\Psi}$ is square and invertible. We shall present a more general approach in Sec.~\ref{gen_AM} to allow for cases where these assumptions are not met in practice.

Any scalable algorithm will rely on efficient matrix-vector multiplication $\bd{\Psi}\bd{x}$. This is achieved if $\bd{\Psi}$ is sparse or has some special structure, such as discrete Fourier or wavelet transforms.  In this work we focus on neighborhood penalties, which are commonly used for x-ray CT, the application explored in Sec.~\ref{results}. We first consider $\bd{\Psi}$ given by  
\begin{align} \label{psi_diff}
(\bd{\Psi})_{jk}=\psi_{jk}=\bigg\{
\begin{array}{lll}
&1    &\qquad \text{if} \quad j=k \\
&-1/N_j &\qquad \text{if} \quad \{j,k\} \in \Theta \\ 
&0    &\qquad \text{otherwise}
\end{array},
\end{align}
where $\Theta$ is the set of ordered pairs $\{j,k\}$ defining a neighborhood system and $N_j$ is the number of neighbors for the $j$th pixel/voxel. An example is shown in Fig.~\ref{fig_graph}.  
The neighborhood is usually chosen to be small enough such that $\bd{\Psi}$ will be sparse with $N_j+1$ non-zero elements in the $j$th row. For the choice in \eqref{psi_diff}, each $s_j$ in \eqref{sparse_rep} is equal to the difference between the $j$th pixel/voxel and the average of its neighboring pixels/voxels, i.e., $s_j=x_j-\sum_{k:  \{j,k\}\in\Theta}[x_k/N_j]$. Note that sparsity of these pixel/voxel differences implies piecewise smooth images.  
Lastly, we note that the choice in \eqref{psi_diff} implies Dirichlet boundary conditions, which is equivalent to assuming additional neighbors outside the image domain with values set to zero, such that the number of neighbors for each pixel/voxel is equal, i.e., $N_j=N$ (see Fig.~\ref{fig_graph}). These boundary conditions agree with physical constraints typical of many real experimental setups, where the attenuation is zero outside the region of interest.  We shall remove these restrictions in Sec.~\ref{gen_AM}.

\subsection{Variational View of the EM Algorithm for ARD} \label{ARD_EM}
We start with a known variational view \cite{neal1998view} of the EM algorithm which is used to solve \eqref{evidence_max} for Gaussian noise models. Here we introduce some of the notation used throughout this work, and discuss the type of difficulties encountered when trying to extend ARD to non-Gaussian noise models.  

We rewrite the model evidence as
\begin{align}
\log p(\mathbf{y}|\bd{\gamma}) =-\mathcal{F}[q(\mathbf{x}),\bd{\gamma}]+D_{KL}\big[q(\mathbf{x})||p(\bd{x}|\bd{y},\bd{\gamma}) \big], \label{evidence}
\end{align}
where
\begin{align}
&\mathcal{F}[q(\mathbf{x}),\bd{\gamma}]=\mathbb{E}_{q(\mathbf{x})} \log [q(\mathbf{x})/p(\bd{y},\bd{x}|\bd{\gamma}) ], \label{FVE} \\
&D_{KL}\big[q||p\big]= \E_{q(\bd{x})} \log [q(\bd{x})/p(\bd{x}|\bd{y},\bd{\gamma})]. \label{KL}
\end{align} 
$q(\bd{x})$ is the proposed variational distribution and $p(\bd{x}|\bd{y},\bd{\gamma})$ is the exact posterior distribution, $\mathcal{F}$ is the \emph{free variational energy} (\textbf{FVE}) or negative evidence lower bound (\textbf{ELBO}), $D_{KL}[q||p]$ is the Kullback-Leibler (KL) divergence, and $\E_q$ is the expected value with respect to $q(\bd{x})$. The EM algorithm can be viewed \cite{neal1998view} as minimizing the free variational energy (FVE) function $\mathcal{F}$ in \eqref{FVE} (or maximizing the ELBO) by alternating between updating $q(\bd{x})$ in the E-step and updating $\bd{\gamma}$ in the M-step, as summarized below
\begin{align}
&\mathrm{\textbf{E}:} \quad q^{(t+1)} =\arg\min_{q} \mathcal{F}[q(\mathbf{x}),\bd{\gamma}^{(t)}], \label{E_var} \\
&\mathrm{\textbf{M}:} \quad \bd{\gamma}^{(t+1)} =\arg\min_{\bd{\gamma}} \mathcal{F}[q^{(t+1)}(\mathbf{x}),\bd{\gamma}]. \label{M_var}
\end{align}
In the E-step, the optimal solution with zero KL divergence is $q^{(t+1)}=p(\bd{x}|\bd{y},\bd{\gamma}^{(t)})$, i.e., the exact posterior based on the previous $\bd{\gamma}^{(t)}$ values, which can be computed directly from \eqref{Bayes}. In the M-step, minimizing $\mathcal{F}$ with respect to $\bd{\gamma}$ can only increase the KL term, since it is nonnegative and overall the model evidence is also increased.

For \emph{non-Gaussian} likelihoods and the prior in \eqref{prior}, the E step in \eqref{E_var} is intractable, i.e., the expression for $p(\bd{x}|\bd{y},\bd{\gamma}^{(t)})$ is unknown and its direct calculation according to \eqref{Bayes} involves a high-dimensional integral that is prohibitive to compute. Instead, we are forced to \emph{restrict} $q(\bd{x})$ to a tractable family of distributions to approximate the posterior. However,  this results in an algorithm that is no longer guaranteed to increase the evidence \cite{gunawardana}. This occurs because the KL term in \eqref{evidence} is never zero and can either decrease or increase after updating $\bd{\gamma}$. Furthermore, a local maximum of the evidence with respect to $\bd{\gamma}$ is generally \emph{not} a local minimum of $\mathcal{F}$ (or a local maximum of the ELBO), except for degenerate cases \cite{gunawardana}. 

Therefore, when considering the above approach to extend ARD, we immediately encounter a \emph{conceptual} difficulty, since most previous theoretical studies of ARD \cite{wipf2011latent,wipf2004perspectives,Wipf2008} are based on the \emph{maximization of the evidence} and its specific closed-form expression for the Gaussian likelihood. It is not immediately clear how to extend the ARD framework to the Poisson noise model in \eqref{poisson} in a manner which will preserve the insights gained from \cite{wipf2011latent,wipf2004perspectives,Wipf2008} and will explain why sparsity is being promoted by the ARD algorithm. 

Perhaps surprisingly, we shall show that the variational algorithm discussed above still preserves the principles of ARD, despite the fact that it is not based on maximizing the evidence, but rather on maximizing the ELBO (lower bound for the evidence).  We call this framework ``Variational ARD'' (\textbf{VARD}).

\subsection{Alternating-Minimization Algorithm for Variational ARD} \label{Pres_AM_VARD}
%Next, we present the proposed alternating-minimization (AM) algorithm which is the extension of the EM algorithm in \eqref{E_var}-\eqref{M_var} to non-Gaussian noise models.   

As mentioned in Sec.~\ref{ARD_EM}, the true posterior distribution $p(\bd{x}|\bd{y},\bd{\gamma})$ corresponding to the Poisson model in \eqref{poisson} and the prior in \eqref{prior} is intractable, so we cannot solve the E-step in \eqref{E_var} exactly, and an approximation to the posterior should be used instead. Perhaps the most common approach is to use a free-form factorized posterior distribution (mean field variational Bayes) \cite{tzikas}, however this is also intractable here due to the lack of conditional conjugacy.
Instead, we restrict the posterior to a parametric multivariate Gaussian form. In addition, in order to reduce the computational complexity, we restrict ourselves to factorized distributions, i.e., \begin{align}
\mathcal{D} \triangleq \{h(\bd{x})\,\, |\,\ h(\bd{x})=\prod_{i=1}^p\cN(x_i|m_i,v_i), \quad \text{for some } m_i,v_i, \,\,i= 1,...,p\} \label{VG},
\end{align}
where $\cN(x_i|m_i,v_i)$ denotes a normal univariate distribution with mean $m_i$ and variance $v_i$. 
%In Sec.~\ref{} we will present a more general version of the algorithm with a non-factorized $q$, but 
We propose to modify the EM algorithm in \eqref{E_var}--\eqref{M_var} by adding the constraint $q \in \mathcal{D}$ during the E-step in \eqref{E_var}. 
The resulting algorithm is defined by the following two steps 
\begin{align}
&\mathrm{\textbf{B}:} \quad (\bd{m}^{(t+1)},\bd{v}^{(t+1)}) =\arg\min_{\bd{m}\succeq 0 ,\bd{v} \succeq 0} \E_{q(\bd{x};\bd{m},\bd{v})}[-\log p(\bd{y}|\bd{x})] + D_{KL}[q(\mathbf{x};\bd{m},\bd{v})||p(\mathbf{x}|\bd{\gamma}^{(t)})], \label{backward} \\
&\mathrm{\textbf{F}:} \quad \bd{\gamma}^{(t+1)} =\arg\min_{\bd{\gamma}} D_{KL}[q(\mathbf{x};\bd{m}^{(t+1)},\bd{v}^{(t+1)})||p(\mathbf{x}|\bd{\gamma})], \label{forward}
\end{align}
which are repeated until convergence. Note that we have rewritten the FVE of \eqref{FVE} as $\mathcal{F}=\E_{q(\bd{x};\bd{m},\bd{v})}\big [-\log p(\bd{y}|\bd{x})\big] + D_{KL}[q(\mathbf{x};\bd{m},\bd{v})||p(\mathbf{x}|\bd{\gamma})]$. The first term in \eqref{backward} does not depend on $\bd{\gamma}$ and was therefore omitted in \eqref{forward}.  In the backward (B) step in \eqref{backward}, we update the approximation to the posterior $q$, which is reduced to only searching for the variational parameters $(\bd{m},\bd{v})$. In the forward (F) step in \eqref{forward}, we update the generative (forward) model by finding $\bd{\gamma}$.  
Note that due to the constraint $q\in\mathcal{D}$ in \eqref{VG}, this is no longer an EM algorithm, and therefore we distinguish between the ``E-step'' and the ``B-step,'' and similarly between the ``M-step'' and the ``F-step.''

Note that the posterior mean $\bd{m}$ is the main object of interest and provides the estimates for the linear attenuation coefficients, which must be nonnegative. Accordingly, a non-negativity constraint on $\bd{m}$ was added in \eqref{backward}. The function in \eqref{backward} is only defined for positive $\bd{v}$ values and using the extended-value function we assume $\mathcal{F}=\infty$ for any $v_j \leq 0$.

The AM algorithm formed by repeating the updates in \eqref{backward}--\eqref{forward} will reduce the FVE at each iteration (and increase the ELBO).   Importantly, as shown below, the AM algorithm leads to near-sparse solutions, i.e., many of the marginal posterior distributions will be highly concentrated around zero in the transform space defined by \eqref{sparse_rep}.  The roles of $\bd{v}$ and $\bd{\gamma}$ in promoting sparse solutions will be made clear in Sec.~\ref{explain}.

For the Poisson model in \eqref{poisson}, the prior in \eqref{prior} and the approximate posterior proposed in \eqref{VG}, the objective function in \eqref{backward} can be written as (dropping iteration number)
\begin{align}
&\mathcal{F}=\underbrace{\mathcal{F}_1(\bd{m},\bd{v})}_{=\E_{q(\bd{x};\bd{m},\bd{v})}[-\log p(\bd{y}|\bd{x})]}+ \qquad \underbrace{\mathcal{F}_2(\bd{m},\bd{\gamma})+\mathcal{F}_3(\bd{v},\bd{\gamma})}_{=D_{KL}[q(\mathbf{x};\bd{m},\bd{v})||p(\mathbf{x}|\bd{\gamma})]}, \label{F} \\
&\mathcal{F}_1(\bd{m},\bd{v})=\sum_{i=1}^n\bigg [y_i \sum_{j=1}^p \phi_{ij} m_j+\eta_i\exp( -\sum_{j=1}^p \phi_{ij} m_j+  \sum_{j=1}^p \phi^2_{ij}v_j /2) \bigg ], \label{F_I} \\
&\mathcal{F}_2(\bd{m},\bd{\gamma})=\frac{1}{2}\sum_{k=1}^p\gamma^{-1}_k (\sum_{j=1}^p \psi_{kj} m_j )^2, \label{F_II}  \\ 
&\mathcal{F}_3(\bd{v},\bd{\gamma})=\frac{1}{2}\sum_{k=1}^p\sum_{j=1}^p \gamma^{-1}_k\psi^2_{kj}v_j  
-\frac{1}{2}\sum_{j=1}^p\log v_j + \frac{1}{2}\sum_{k=1}^p\log\gamma_k,  \label{F_III}
\end{align}   
where $\phi_{ij}=(\bd{\Phi})_{ij}$ are the entries of the system matrix defined after \eqref{poisson}, and $\psi_{ij}=(\bd{\Psi})_{ij}$ are the entries of the $\bd{\Psi}$ matrix defined in \eqref{prior}. In deriving the last term in \eqref{F_III}, we assumed that $\bd{\Psi}$ is square and non-singular\footnote{In deriving \eqref{F_III} we used $\log|(\bd{\Psi}^T\bd{\Gamma}^{-1}\bd{\Psi})^{-1}|=-\log|\bd{\Psi}^T\bd{\Gamma}^{-1}\bd{\Psi}|=\log|\bd{\Gamma}|+C=\sum_k\log\gamma_k+C$, where $C$ is a finite constant and can be excluded. Note that  $(\bd{\Psi}^T\bd{\Gamma}^{-1}\bd{\Psi})$ has to be non-singular in order for this expression to be well-defined. The second equality with $C<\infty$ only holds when $\bd{\Psi}$ is square and non-singular.}. We provide a generalization in Sec.~\ref{gen_AM} which removes this restriction and allows any $\bd{\Psi}$. 

Recalling the discussion in Sec.~\ref{model} about the sparse structure of $\bd{\phi}_i$, we note that the objective function in \eqref{F}--\eqref{F_III} has a computational complexity of $O(np^{1/D})$ \footnote{For convenience we restate the definitions here: $n$ is the number of measurements, $p$ is the total number of pixels/voxels, and $D=2,3$ for 2D/3D images, respectively.} which is feasible to compute in order to assess convergence in practice. In contrast, the original ARD \cite{tipping2001sparse} or other extensions \cite{Wipf2008,seeger2011large} require $O(p^3)$ operations to evaluate the objective which is prohibitive for large scale problems where $p$ is very large, so the objective can only be approximated, e.g., via the stochastic estimator of Papandreou and Yuille \cite{Papandreou2011}.  

The solution to the F-step in \eqref{forward} is obtained by solving  
$\nabla_{\bd{\gamma}}D_{KL}[q||p]=\bd{0}$ which yields
\begin{align}
\bd{\gamma}^{(t+1)}=[\bd{\Psi}\bd{m}^{(t+1)}]\odot [\bd{\Psi}\bd{m}^{(t+1)}] +[\bd{\Psi}\odot\bd{\Psi}]\bd{v}^{(t+1)}, \label{sol_forward}
\end{align}
where $\odot$ denotes elementwise product. It is straightforward to verify that \eqref{sol_forward} is indeed a minimizing point of \eqref{forward} (for finite $\bd{\gamma}$).    
For fixed $\bd{\gamma}$, the FVE objective function is jointly convex with respect to $(\bd{m},\bd{v})$, which implies that any local minimum of the objective in \eqref{backward} is also a global minimum. 
Note that the FVE function is not jointly convex with respect to $(\bd{m},\bd{v},\bd{\gamma})$, so local minima are possible. In a similar way, the original ARD-EM algorithm \cite{tipping2001sparse} is also only guaranteed to find a \emph{local} maximum of the evidence \cite{Wu}.

\subsection{Discussion} \label{explain}
The B-step in \eqref{backward} is well understood from a Bayesian variational inference perspective, and it is equivalent to minimizing the KL divergence between the proposed posterior $q(\bd{x})$ and the true posterior $p(\bd{x}|\bd{y},\bd{\gamma})$, as described in Fig.~\ref{Fig_Back}.

However, the F-step in \eqref{forward} is somewhat puzzling, as it is unclear why it promotes sparse solutions, especially since using a Gaussian prior for MAP estimation does not lead to sparse solutions.   To explain how sparsity is promoted, consider the $\bd{s}$-space defined in \eqref{sparse_rep}, where the image has a sparse representation. 
Note that the KL divergence in \eqref{forward} is invariant under the transformation in \eqref{sparse_rep}. The prior and posterior distributions become
\begin{align}
p(\bd{s}|\bd{\gamma})=\cN(\bd{s}|\bd{0},\bd{\gamma}), \qquad  q(\bd{s})=\cN(\bd{s}|\bd{\mu},\bd{\Sigma}), \qquad 
\bd{\mu}=\bd{\Psi}\bd{m},  \qquad \bd{\Sigma}=\bd{\Psi}\text{Diag}(\bd{v})\bd{\Psi}^T, \label{prior_s} 
\end{align}
where $\bd{\Psi}$ is assumed to be invertible.  Next, we make use of the following representation for a factorized Student's-t distribution  \cite{wipf2004perspectives}
\begin{align}
\text{t}(\bd{s}|\nu) \triangleq {\prod}_{j=1}^p\text{t}(s_j|\nu)={\prod}_{j=1}^p\max_{\gamma_i \geq 0}\cN(s_j|0,\gamma_j) \varphi(\gamma_j|\nu), \label{student}
\end{align}
where $\text{t}(s_j|\nu)=\Gamma[(\nu+1)/2)](1+s^2_j/\nu)^{-(\nu+1)/2}/[\sqrt{\nu\pi}\,\Gamma(\nu/2)]$ is a univariate Student's-t distribution with $\Gamma(\cdot)$ denoting the Gamma function, and $\varphi(\gamma_j|\nu)=\gamma_j^{-\nu/2}\exp(-\nu/2\gamma_j)$. To obtain \eqref{student}, one exploits Fenchel duality  \cite{Boyd}
$\log t(\sqrt{s_j}|\nu)=\max_{\lambda_j\leq 0}(\lambda_j s_j-f^*(\lambda_j))$ where $f^*(\lambda_j)$ is the Fenchel dual of $\log t(\sqrt{s_j}|\nu)$, and then uses the monotonically increasing transformation $\lambda_j=-1/2\gamma_j$ to obtain $\log t(\sqrt{s_j}|\nu)=\max_{\gamma_j\geq 0}(-s_j/2\gamma_j+f^*(-1/2\gamma_j))$.  By exponentiating both sides of the dual representation for $\log t(s_i|\nu)$ one obtains \eqref{student} \cite{wipf2004perspectives}. For $\nu \rightarrow 0$,  $\text{t}(s_i|\nu) \propto 1/|s_i|$ with sharp spikes at $s_i=0$. 
Note in \eqref{student} that $\nu \rightarrow 0$ implies $\varphi\rightarrow 1$, and the multivariate t-distribution $t(\bd{s}|\nu)$ becomes an envelope for the ARD prior $p(\bd{s}|\bd{\gamma})$ in \eqref{prior_s}. This is illustrated in Fig.~\ref{Fig_For} using a 2D example with $\bd{s}\in\mathbb{R}^2$, which depicts the proposed posterior $q(\bd{s})$ and the prior $p(\bd{s}|\bd{\gamma})$, with the latter ``trapped'' inside the level sets of $\text{t}(\bd{s}|\nu)$ due to \eqref{student}. One can see in Fig.~\ref{Fig_For} how selecting the widths $\bd{\gamma}$ of the Gaussian prior $p(\bd{s})$ to minimize the KL divergence between $q(\bd{s})$ and $p(\bd{s})$ promotes the shrinking of $\gamma_k$ for some $k$ (minimizing KL divergence between the two distributions is illustrated as maximizing the overlap between the level-set ellipsoids corresponding to the majority of the probability mass of these distributions). In the example of Fig.~\ref{Fig_EM}, the posterior has a much lower mean and variance along $s_2$ than along $s_1$, and the F-step will further shrink $\gamma_2$. The corresponding parameter $s_2$ will be shrunk in the sense that the marginal prior $p(s_2|\gamma_2)=\cN(s_2|0,\gamma_2)$ will be more concentrated around zero, and so will the true posterior $p(s_2|\bd{y})$ (see \eqref{Bayes}). In the following B-step, the \emph{approximate} posterior $q$ will also concentrate further around zero due to the zero-forcing property of the KL divergence \cite{Bishop} (recall that in the B-step we minimize KL-divergence between $q$ and the true posterior). Note that repeating the B and F steps is required to obtain sparse solutions. 

\vspace{-0.5em}
\begin{figure}[H]
\centering
  \subfigure[Backward Step]{%
      \includegraphics[width=0.4\columnwidth]{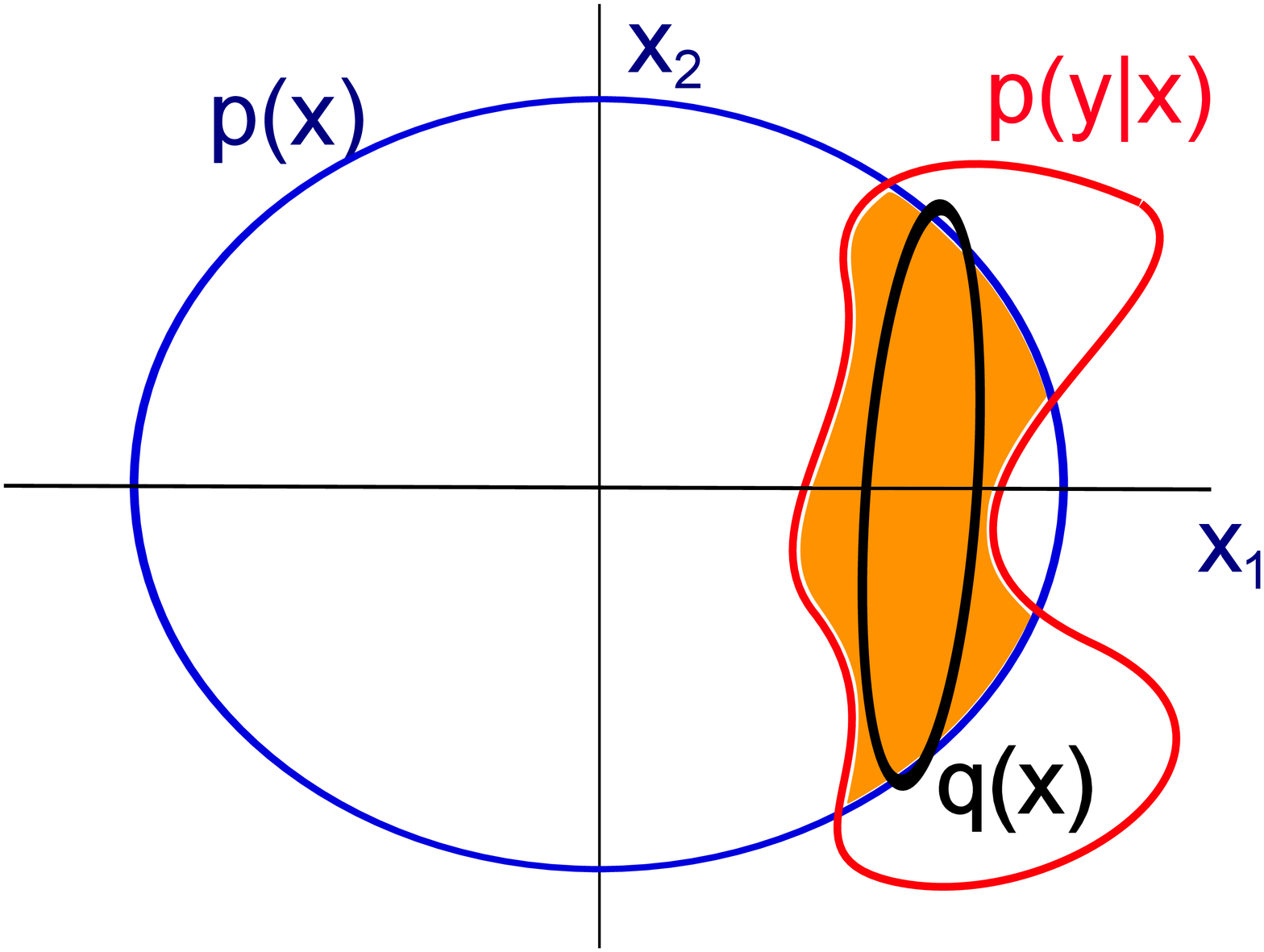}
      \label{Fig_Back}} % 
    \subfigure[Forward Step]{%   
      \includegraphics[width=0.4\columnwidth]{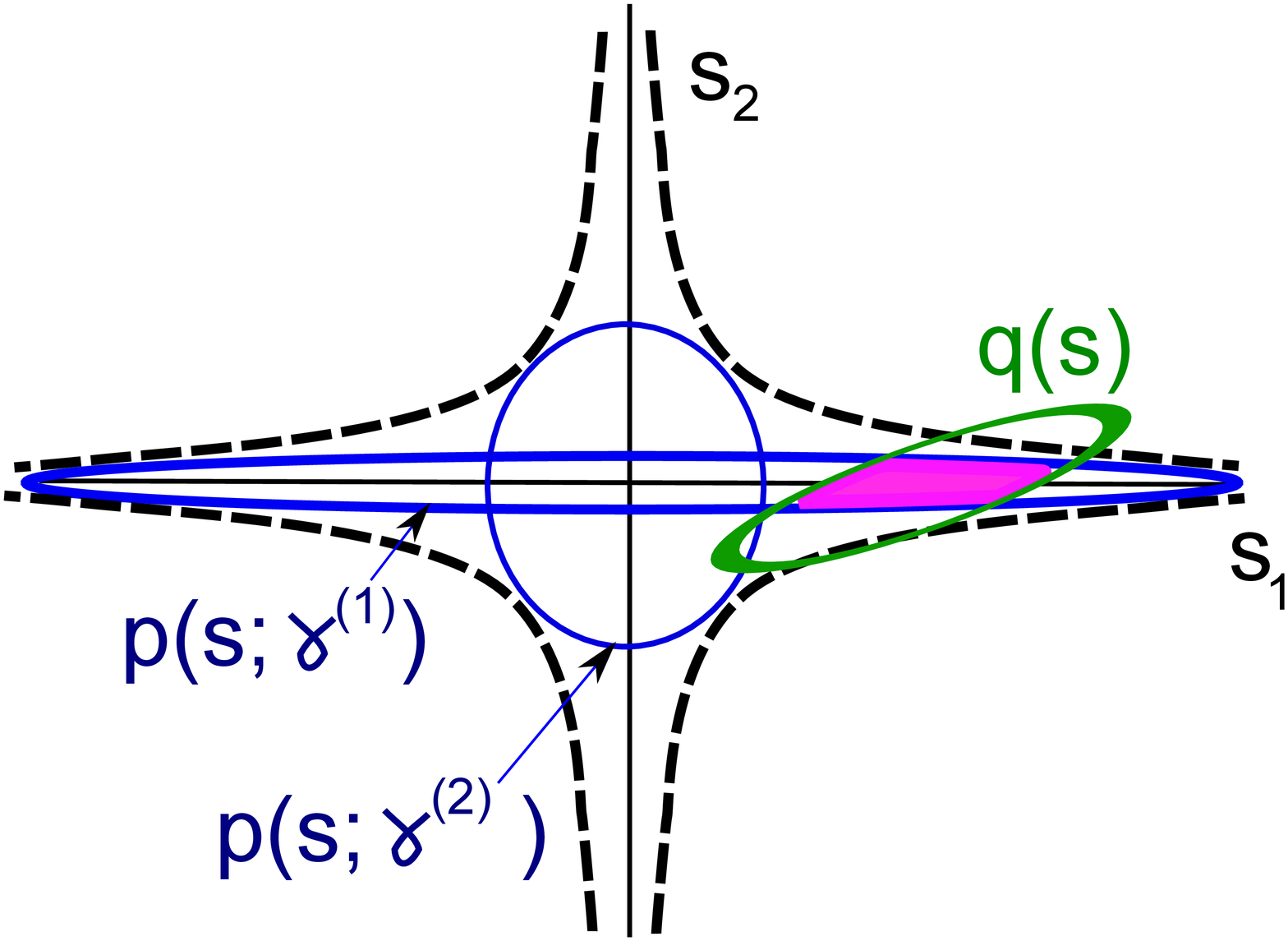} 
      \label{Fig_For}} %       
\caption{Description of the AM algorithm proposed in \eqref{backward}--\eqref{forward}. (a) Backward step: level sets for the likelihood $p(\bd{y}|\bd{x})$ (red curve), prior $p(\bd{x})$ (blue curve) and the proposed posterior $q(\bd{x})$ (black curve); true posterior $p(\bd{x}|\bd{y})$ is concentrated in the orange region; proposed posterior $q(\bd{x})$ is chosen to minimize the KL divergence with $p(\bd{x}|\bd{y})$. (b) Forward step: a level set for the student's t distribution (dashed black curves) forming an envelope for the level set of the zero mean Gaussian prior $p(\bd{s})$ (blue curves); $p(\bd{s}|\bd{\gamma}^{(1)})$ and $p(\bd{s}|\bd{\gamma}^{(2)})$ correspond to two different choices for $\bd{\gamma}$ where $p(\bd{s}|\bd{\gamma}^{(2)})$ assigns equal variance along $s_1$ and $s_2$ and $p(\bd{s}|\bd{\gamma}^{(1)})$ assigns small variance along $s_2$ and large variance along $s_1$. Choosing $p(\bd{s}|\bd{\gamma}^{(1)})$ maximizes the overlap between the prior $p(\bd{s})$ and the proposed posterior $q(\bd{s})$ (leading to lower KL divergence). $s_2$ is shrunk in the sense that its exact and proposed posterior distributions are further concentrated around smaller values due to the concentration of the prior about zero. Note that Fig.~(a) is in the $\bd{x}$ domain whereas Fig.~(b) is in the $\bd{s}$ domain.}\label{Fig_EM}
\end{figure}

The description of the mechanism responsible for promoting sparsity was inspired by the work of Wipf et al.\cite{wipf2004perspectives}, which uses similar arguments. However, that work relies entirely on the evidence function $p(\bd{y}|\bd{\gamma})$ in \eqref{evidence} since a Gaussian noise model was considered.  In contrast, our proposed approach is based on the free variational energy (FVE) function and applies more generally, even to cases where the expression for the evidence is unknown and an algorithm that increases the evidence is unavailable. This is important for non-Gaussian noise models, for which the evidence function is intractable, so we are forced to work with the FVE function instead.  
Another important difference between our work and that of Wipf et al.\cite{wipf2004perspectives} is that we show how sparsity is promoted \emph{during the iterations of the algorithm}, whereas in \cite{wipf2004perspectives} it has been shown that a solution to \eqref{evidence_max} is sparse.  
Note also that \cite{wipf2004perspectives} considers the original ARD prior in \eqref{prior_ARD}, whereas we consider the more general case in \eqref{prior}.

\section{Parallel AM Algorithm} \label{fast_VARD}
Here we present a modified AM algorithm, which uses an alternative B-step. The rationale behind the modified algorithm lies in \emph{decoupling} the variational parameters $m_j, v_j$ of each pixel/voxel. This reduces the algorithm into simple 1D optimization tasks that can be computed in parallel, leading to high scalability. First, note that \eqref{F_I} and  \eqref{F_II} are not additively separable with respect to the individual parameters $m_j, v_j$. We shall therefore introduce a surrogate function, making use of the following lemma. 
\begin{lemma}[The convex decomposition lemma \cite{OSullivan}] \label{lemma} 
Let $f(\bd{x})$ be a convex function with $\bd{x}\in\Reals^p$ and let  $\widehat{\bd{x}}\in\Reals^p$ be a reference point. Let $r_j \geq 0$ such that $\sum_{j=1}^p r_j=1$.  Then 
\begin{align}
f(\bd{x})\leq \sum_{j=1}^p r_j f(\widehat{\bd{x}}+\frac{x_j-\widehat{x}_j}{r_j}\bd{e}_j), \label{lemma_eq}
\end{align}\\[-3.5ex]
where $\bd{e}_j$ denotes a column vector with all zeros except 1 at the $j$th entry. \textbf{Equality} in \eqref{lemma_eq} is obtained if $x_j=\widehat{x}_j$ for all $j$  or if $f$ is affine.
\end{lemma}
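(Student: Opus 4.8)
The plan is to prove the convex decomposition lemma directly from Jensen's inequality, after rewriting the argument of $f$ on the left-hand side as a convex combination of the points appearing on the right-hand side. The key algebraic observation is that, since $\sum_{j=1}^p r_j = 1$ and $\bd{x}-\widehat{\bd{x}} = \sum_{j=1}^p (x_j-\widehat{x}_j)\bd{e}_j$, one can write
\begin{align}
\bd{x} = \widehat{\bd{x}} + \sum_{j=1}^p (x_j-\widehat{x}_j)\bd{e}_j = \sum_{j=1}^p r_j\Bigl(\widehat{\bd{x}} + \frac{x_j-\widehat{x}_j}{r_j}\bd{e}_j\Bigr). \nonumber
\end{align}
Thus $\bd{x}$ is exhibited as a convex combination (with weights $r_j$) of the $p$ points $\bd{z}_j \triangleq \widehat{\bd{x}} + (x_j-\widehat{x}_j)\bd{e}_j/r_j$.

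First I would handle the degenerate case in which some $r_j = 0$: there the corresponding term on the right-hand side must be interpreted via the convention that $r_j f(\widehat{\bd{x}} + (x_j-\widehat{x}_j)\bd{e}_j/r_j)$ is $0$ when $x_j = \widehat{x}_j$ (so that coordinate simply drops out of both the combination and the bound) and is $+\infty$ otherwise, making the inequality trivially true; so without loss of generality I may assume all $r_j > 0$, in which case each $\bd{z}_j$ is a well-defined point of $\Reals^p$. Then I would apply Jensen's inequality for the convex function $f$ to the convex combination displayed above:
\begin{align}
f(\bd{x}) = f\Bigl(\sum_{j=1}^p r_j \bd{z}_j\Bigr) \le \sum_{j=1}^p r_j f(\bd{z}_j) = \sum_{j=1}^p r_j f\Bigl(\widehat{\bd{x}} + \frac{x_j-\widehat{x}_j}{r_j}\bd{e}_j\Bigr), \nonumber
\end{align}
which is exactly \eqref{lemma_eq}.

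For the equality claim, I would note that if $x_j = \widehat{x}_j$ for all $j$ then every $\bd{z}_j = \widehat{\bd{x}} = \bd{x}$, so both sides equal $f(\bd{x})$; and if $f$ is affine then Jensen's inequality holds with equality for any convex combination, so the bound is tight regardless of the $\bd{z}_j$. One could also remark that for a strictly convex $f$ these are essentially the only cases of equality, since Jensen is strict unless all the $\bd{z}_j$ coincide.

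This proof is almost entirely routine; the only point requiring a little care — and hence the main (mild) obstacle — is the bookkeeping around $r_j = 0$ and the extended-real-valued convention, which is why the lemma is stated with $r_j \ge 0$ rather than $r_j > 0$; in the applications in Section~\ref{fast_VARD} one takes all $r_j > 0$ so this subtlety does not actually intervene. I would keep the argument to the two displayed equations above plus one sentence on the equality conditions.
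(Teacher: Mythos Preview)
Your proof is correct and follows essentially the same route as the paper: both rewrite $\bd{x}$ as $\widehat{\bd{x}}+\sum_j r_j\,[(x_j-\widehat{x}_j)/r_j]\bd{e}_j$ and then apply Jensen's inequality to this convex combination, with the equality conditions handled in the same way. The only difference is cosmetic---the paper phrases Jensen via the shifted function $g(\bd{x})\triangleq f(\widehat{\bd{x}}+\bd{x})$ applied to $\bd{\nu}_j/r_j$, whereas you apply Jensen directly to $f$ at the points $\bd{z}_j$---and your explicit treatment of the $r_j=0$ case is a minor elaboration the paper omits.
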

%\\[-1ex]

\begin{proof}
The proof readily follows from Jensen's inequality \cite{Boyd}
\begin{align}
f(\bd{x})=f(\widehat{\bd{x}}+\sum_{j=1}^p (x_j-\widehat{x}_j)\bd{e}_j)=f\big(\widehat{\bd{x}}+\sum_{j=1}^p r_j\frac{x_j-\widehat{x}_j}{r_j}\bd{e}_j\big) \leq \sum_{j=1}^p r_j f(\widehat{\bd{x}}+\frac{x_j-\widehat{x}_j}{r_j}\bd{e}_j), 
\end{align}
where we added and subtracted a reference point $\widehat{\bd{x}}$  and then applied Jensen's inequality to $g(\sum_j r_j [\bd{\nu}_j/r_j])$ where $\bd{\nu}_j\triangleq(x_j-\widehat{x}_j)\bd{e}_j$ and $g(\bd{x})\triangleq f(\widehat{\bd{x}}+\bd{x})$. Equality is obtained iff $\bd{\nu}_j/r_j$ is independent of $j$ or $f$ is affine. The former is obtained if $x_j=\widehat{x}_j$,\, $\forall j$.
\end{proof}

We will also use a minor (but useful) extension of lemma~\ref{lemma} by O'Sullivan and Benac \cite{OSullivan}, where we introduce a dummy variable denoted $x_0$ with a corresponding weight $r_0\geq 0$, such that $f(x_0,\bd{x})=f(\bd{x})$. Accordingly, $\bd{x}$ and $\bd{r}$ in \eqref{lemma_eq} are replaced by $(x_0, \bd{x})$ and $(r_0, \bd{r})$, respectively  ($\bd{r}$ is the collection of all $r_j$'s), and the summation in \eqref{lemma_eq} over $j$ from $1$ to $p$ is replaced by a summation from $0$ to $p$.  This modification leads to the requirement that $\sum_{j=1}^p r_j+r_0=1$ which allows the freedom to choose $\sum_{j=1}^p r_j\leq 1$ instead of strict equality as in Lemma~\ref{lemma} (which of course requires that $r_0=1-\sum_{j=1}^p r_j$). We choose to fix the dummy variable $x_0=\hat{x}_0$ so that the term corresponding to $j=0$ on the right hand side of \eqref{lemma_eq} is just a constant and will not be involved in the following optimization. 

Before we proceed, we make several definitions to simplify the following derivations,
\begin{align}
&p^{(t)}_i \triangleq \sum_j \phi_{ij} m_j^{(t)}, \qquad \tilde{p}^{(t)}_i \triangleq \sum_j\phi^2_{ij}v^{(t)}_j, \label{p}\\
&\mu^{(t)}_i \triangleq \mathbb{E}_{q(\bd{x};\bd{m}^{(t)},\bd{v}^{(t)})} \big [\eta_i\exp(-\bd{\phi}_i^T\bd{x}  ) \big ] = \eta_i \exp  ( \tilde{p}^{(t)}_i/2)\exp (-p^{(t)}_i), \label{mu} \\[1.5ex]
&b^y_j \triangleq \sum_i   \phi_{ij}y_i, \qquad  b^{(t)}_j \triangleq \sum_i   \phi_{ij}\mu^{(t)}_i, \qquad \tilde{b}^{(t)}_j \triangleq \sum_i   \phi^2_{ij}\mu^{(t)}_i/2, \label{b}
\end{align}
where $y_i$, $\bd{\phi}_i$ and $\eta_i$ are defined in \eqref{poisson}. To distinguish between quantities associated with the mean and variance, we denote any quantities associated with the variance using a tilde. 
In \eqref{p}, $p^{(t)}_i$ is the projection of the posterior mean $\bd{m}^{(t)}$ along the $i$th line, which we shall call ``mean-type'' projection; $\tilde{p}^{(t)}_i$ is a projection of the  posterior variance $\bd{v}^{(t)}$ along the $i$th line using an elementwise squared system matrix, which we term a ``variance-type'' projection. In \eqref{mu}, $\mu^{(t)}_i$ is the predicted Poisson rate for the $i$th measurement, based on the expectation with respect to posterior distribution $q$ computed at iteration $t$. In \eqref{b}, $b^y_j$ is the back-projection of measurements $\bd{y}$ to the $j$th pixel; $b^{(t)}_j$ and $\tilde{b}^{(t)}_j$ are the back-projections of the predicted Poisson rate in \eqref{mu} to the $j$th pixel, using the backward (adjoint) operator and the squared backward operator, respectively, which we call ``mean-type'' and ``variance-type'' backprojections. 
 
By noting that \eqref{F_I} can be written as $\mathcal{F}_1=\sum_i \mathcal{F}_{1i}$ and that $\mathcal{F}_{1i}$ is jointly convex with respect to $(\bd{m},\bd{v})$,  we apply Lemma~\ref{lemma}  to each $\mathcal{F}_{1i}$ separately.
Using \eqref{lemma_eq} with its minor extension (mentioned after Lemma~\ref{lemma}) and $\bd{x}=(\bd{m},\bd{v})$, $\widehat{\bd{x}}=(\bd{m}^{(t)},\bd{v}^{(t)})$, and choosing a dummy variable $m_0$ which is set to zero, we obtain the bound $\mathcal{F}_1\leq \mathcal{S}^{(t)}_1(\bd{m},\bd{v})$ at iteration $t$ with $\mathcal{S}^{(t)}_1$ defined as
\begin{align}
\mathcal{S}^{(t)}_1(\bd{m},\bd{v}) =\sum_i \sum_j y_i  \phi_{ij}m_j+\mu^{(t)}_i r_{ij} \exp{\big[-\frac{\phi_{ij}}{r_{ij}}(m_j-m^{(t)}_j)\big]}+\mu^{(t)}_i \tilde{r}_{ij}\exp{\big[ \frac{\phi^2_{ij}}{\tilde{r}_{ij}}(v_j-v^{(t)}_j)/2  \big]} , \label{S1_temp}
\end{align}
where we made use of \eqref{p}--\eqref{b} and $r_{ij}$ corresponds to $m_j$ while $\tilde{r}_{ij}$ corresponds to $v_j$ (for the $i$th measurement). 
Note that the summation over $j$ in  \eqref{S1_temp} includes the dummy variable $m_0$ with $m_0=m_0^{(t)}=0$ for any $t$. 
Also note that \eqref{S1_temp} is separable with respect to the components $m_j$ and $v_j$. 
According to the extended Lemma~\ref{lemma} we are free to choose $r_{ij}, \tilde{r}_{ij}$ as long as \\[-1.2em]
\begin{align}
\sum_j r_{ij}+\sum_j \tilde{r}_{ij}\leq 1, \label{sum_qr}
\end{align}
which must be satisfied for each $i$. We make the following choice for $r_{ij}$ and $\tilde{r}_{ij}$
\begin{align}
r_{ij}=\frac{\phi_{ij}}{Z_1}, \qquad \qquad   \tilde{r}_{ij}=\frac{\phi^2_{ij}}{2Z_1}, \qquad \qquad  Z_1=\max_i \sum_j \phi_{ij}+\phi^2_{ij}/2, \label{qr}
\end{align}
which can be easily verified to satisfy the condition in \eqref{sum_qr}. 
Substituting \eqref{qr} into \eqref{S1_temp}, and using the definitions in Eq.~\eqref{b}, we obtain
\begin{align}
\mathcal{S}^{(t)}_1(\bd{m},\bd{v}) = \sum_j b^y_j m_j+\frac{b^{(t)}_j}{Z_1} \exp{\big[-Z_1(m_j-m^{(t)}_j)\big]} +\frac{\tilde{b}^{(t)}_j}{Z_1}   \exp{\big[ Z_1 (v_j-v^{(t)}_j)  \big]} . \label{S1}
\end{align}
The particular choice in \eqref{qr} is very useful since the sum over $i$ in \eqref{S1_temp} needs to be done only once instead of repeating it for each update of $m_j$ during the search for the minimum of $\mathcal{S}_1$.

We derive a separable bound for \eqref{F_II}, $\mathcal{F}_2\leq \mathcal{S}^{(t)}_2(\bd{m},\bd{v})$, 
by noting that $x^2$ is a convex function and using lemma~\ref{lemma} again to obtain
\begin{align}
\mathcal{S}^{(t)}_2(\bd{m},\bd{v})=
\frac{1}{2}\sum_k\frac{1}{\gamma^{(t)}_k} \sum_j s_{kj} \big [(d^{(t)}_k)^2+2d^{(t)}_k\frac{\psi_{kj}}{s_{kj}} (m_j-m^{(t)}_j) +\frac{\psi^2_{kj}}{s^2_{kj}} (m_j-m^{(t)}_j)^2\big ], \label{S2_temp}
\end{align}
where $d^{(t)}_{k}\triangleq\sum_j \psi_{kj} m^{(t)}_j$ (also defined in \eqref{d_f}). Similarly to \eqref{sum_qr}, we require $\sum_j s_{kj} \leq 1$ using the dummy variable extension.  One choice for $s_{kj}$ is  
\begin{align}
s_{kj}=\frac{|\psi_{kj}|}{Z_2}, \qquad Z_2=\max_k \sum_j |\psi_{kj}|. \label{s}
\end{align}
Substituting \eqref{s} into \eqref{S2_temp}, we obtain 
\begin{align}
\mathcal{S}^{(t)}_2(\bd{m},\bd{v})= h^{(t)}+\sum_j [f^{(t)}_j\,(m_j-m^{(t)}_j)+\,g^{(t)}_j\,(m_j-m^{(t)}_j)^2], \label{S2}
\end{align}
where we introduced the following definitions
\begin{align}
&d^{(t)}_{k}\triangleq\sum_j \psi_{kj} m^{(t)}_j, \qquad \qquad f^{(t)}_j \triangleq \sum_k \psi_{kj}d^{(t)}_k/ \gamma^{(t)}_k, \label{d_f} \\
&g^{(t)}_j\triangleq Z_2\sum_k |\psi_{kj}|/2 \gamma^{(t)}_k, \qquad h^{(t)}\triangleq \sum_k (d^{(t)}_k)^2/2\gamma^{(t)}_k,  \label{g_h}  \\
&\xi^{(t)}_j\triangleq \sum_k\psi^2_{kj}/\gamma^{(t)}_k. \label{xi} 
\end{align}
$d^{(t)}_k$ in \eqref{d_f} is the estimated $k$th element in the sparse representation for the object based on the posterior mean $\bd{m}^{(t)}$, e.g., if $\bd{\Psi}$ is chosen according to \eqref{psi_diff} (assuming sparsity in the pixel/voxel-difference space) then $d^{(t)}_k=m^{(t)}_k-\sum_{j: \{j,k\}\in\Theta}[m^{(t)}_j/N]$ where $\Theta$ defines the neighborhood system and $N$ is the number of neighbors. In deriving \eqref{g_h} we made use of $\sum_j s_{kj}=1$.

Combining \eqref{S1},\eqref{S2}, and \eqref{F_III} (the latter is already separable), we obtain a surrogate function $\mathcal{S}^{(t)}$ that is a global majorizer of $\mathcal{F}$, i.e., $\mathcal{F}(\bd{m}, \bd{\gamma}) \leq \mathcal{S}^{(t)}(\bd{m},\bd{v})$,  given by
\begin{align}
&\mathcal{S}^{(t)}(\bd{m},\bd{v}) =\sum_j \mathcal{S}^{(t)}_{m_{j}}(m_j)+\sum_j\mathcal{S}^{(t)}_{v_j}(v_j), \label{surogate} \\
&\mathcal{S}^{(t)}_{m_j}(m_j)=b^y_j m_j+\frac{b^{(t)}_j}{Z_1} \exp{\big[-Z_1(m_j-m^{(t)}_j)\big]} + f^{(t)}_j\,(m_j-m^{(t)}_j)+\,g^{(t)}_j\,(m_j-m^{(t)}_j)^2, \label{surogate1} \\
&\mathcal{S}^{(t)}_{v_j}(v_j)= \frac{\tilde{b}^{(t)}_j}{Z_1}   \exp{\big[ Z_1 (v_j-v^{(t)}_j)  \big]}+\xi^{(t)}_j v_j/2 -\log v_j/2.  \label{surogate2}
\end{align}   
Note that the surrogate function $\mathcal{S}$ in \eqref{surogate} is separable with respect to both the components of $\bd{m}$ and $\bd{v}$.  This yields a modified B-step where $\mathcal{S}$ is minimized by simultaneous (parallel) 1D optimization tasks, each with respect to a different component $m_j$ or $v_j$. The iterations are defined as
\begin{align}
&m^{(t+1)}_j=\argmin_{m_j\geq 0} \mathcal{S}^{(t)}_{m_j}(m_j), \label{min_S_m} \\
&v^{(t+1)}_j=\argmin_{v_j \geq 0}\mathcal{S}^{(t)}_{v_j}(v_j). \label{min_S_v}
\end{align}
Combining the modified B-step in \eqref{min_S_m}--\eqref{min_S_v} with the F-step in \eqref{sol_forward}, we obtain the parallel VARD  algorithm described in Algorithm 1. 
\begin{algorithm}
\caption{: PAR-VARD (VARD with Separable Surrogates)} \label{alg1}
\begin{algorithmic}[1]
\State Initialize $\bd{m}^{(1)},\bd{v}^{(1)},\bd{\gamma}^{(1)}$
\State Compute $\bd{b}^y$ in \eqref{b} \hfill  \% backprojection of data \%    
\For  {t=1 to N} \hfill \% AM iterations \%  
\State $\bd{p}^{(t)}\leftarrow\bd{\Phi}\bd{m}^{(t)}$  \hfill\%``mean-type ''projections \%
\State $\tilde{\bd{p}}^{(t)}\leftarrow(\bd{\Phi}\odot \bd{\Phi})\bd{v}^{(t)}$ \hfill \% ``variance-type'' projections \% 
\State $\bd{\mu}^{(t)}\leftarrow \bd{\eta}\odot \exp ( \tilde{\bd{p}}^{(t)}/2)\odot\exp (-\bd{p}^{(t)})$  
\State Compute $\bd{g}^{(t)}$, $\bd{\xi}^{(t)}$ and $\bd{f}^{(t)}$ defined in \eqref{d_f}--\eqref{xi} 
\State $\bd{b}^{(t)}\leftarrow\bd{\Phi}^T\bd{\mu}^{(t)}$  \hfill \% ``mean-type'' backprojections of $\bd{\mu}^{(t)}$ \%  
\State $\tilde{\bd{b}}^{(t)}\leftarrow(\bd{\Phi}\odot \bd{\Phi})^T\bd{\mu}^{(t)}$ \hfill \% ``variance-type'' backprojections of $\bd{\mu}^{(t)}$ \%
\For {j=1 to p} \hfill \% executed in parallel \%
\State $m_j^{(t+1)}\leftarrow \argmin_{m_j \geq 0} \mathcal{S}^{(t)}_{m_j}(m_j;m^{(t)}_j,b^{(t)}_j,f_j^{(t)},g_j^{(t)})$ \hfill \% see \eqref{surogate1} \%
\State $v_j^{(t+1)}\leftarrow \argmin_{v_j \geq 0} \mathcal{S}^{(t)}_{v_j}(v_j;v^{(t)}_j,\tilde{b}^{(t)}_j,\xi_j^{(t)})$ \hfill \% see \eqref{surogate2} \%
\EndFor
\State $\bd{\gamma}^{(t+1)} \leftarrow[\bd{\Psi}\bd{m}^{(t)}]\odot [\bd{\Psi}\bd{m}^{(t)}]+[\bd{\Psi}\odot\bd{\Psi}]\bd{v}^{(t)}$ \hfill \% see \eqref{sol_forward} \%
\EndFor
\end{algorithmic}
\end{algorithm}
Each iteration requires a ``mean-type'' forward-projection $\bd{p}^{(t)}$ and a ``variance type'' forward-projection  $\tilde{\bd{p}}^{(t)}$ (lines 4-5 in Algorithm~1) which can be computed in parallel. It also requires a ``mean-type'' backprojection $\bd{b}^{(t)}$ and a ``variance-type'' backprojection
 $\tilde{\bd{b}}^{(t)}$ (lines 8-9 in Algorithm~1), which can also be computed in parallel. 
Each iteration also requires solving two types of 1D optimization problems, one for the mean components $m_j$  and one for the variance components $v_j$ (lines 11-12 in Algorithm~1). 
The 1D optimization tasks for all pixels/voxels can be done in parallel and the scalar updates for mean and variance can also be done in parallel.
Note that despite the latter parallelization, $\bd{b}^{(t)}$ and $\bd{\tilde{b}}^{(t)}$ provide information about both the mean and variance at the previous iteration which is shared during the mean and variance updates in \eqref{min_S_m}--\eqref{min_S_v} (lines 11-12 in Algorithm~1). Also note that these subiterations for $(\bd{m},\bd{v})$  are themselves iterations of another alternating minimization algorithm (lines 4-12 of Algorithm~1). Here we chose to use only one subiteration, but any number can be used instead. 

A considerable advantage of this algorithm is that it allows fast parallel computing on multi-core computers, or on dedicated hardware such as field-programmable gate arrays (FPGA). Another advantage is that the solution to the constrained problem in \eqref{min_S_m} can be obtained by first solving the unconstrained problem using simple gradient-based methods; if $m_j<0$ then the solution to the constrained problem must be $m_j=0$, otherwise the solution is the same. This simple thresholding for $m_j$ leads to the optimal solution only in one-dimensional minimization problems, and the same procedure for the original B-step in \eqref{backward} can lead to sub-optimal solutions or even lack of convergence. 

The computational complexity of a forward/backprojection is $O(np^{1/D})$, where $n$ is the number of measurements, $p$ is the total number of pixels/voxels and $D=2,3$ for 2D/3D images, respectively (recall the discussion about the sparsity of $\bd{\Phi}$ in Sec.~\ref{model}).   The computational complexity of the operations $\bd{\Psi}\bd{m}$ and $(\bd{\Psi}\odot\bd{\Psi})\bd{v}$ is $O(p(N+1))$, where $N$ is the number of neighboring pixels/voxels defined by the prior (recall the discussion on the sparsity of $\bd{\Psi}$ in Sec.~\ref{VARD_prior}). Typically, $N\approx 3^D-1$, $n \ge  p$, and $p^{1/D}\gg N$, so the forward/backprojection cost dominants the cost of $\bd{\Psi}\bd{m}$ and $(\bd{\Psi}\odot\bd{\Psi})\bd{v}$.  
The details regarding the implementations of the 1D optimization tasks (lines 11-12 in Algorithm~1) are given in Appendix~\ref{apx_C}.

\section{Optimality Conditions and Convergence Analysis} \label{convergence}
First we provide the necessary optimality conditions for the minimization problems presented in Secs.~\ref{AM_ARD}--\ref{fast_VARD} as well as a few definitions in order to simplify the following analysis. We then present a detailed study of the convergence properties of the parallel AM algorithm derived in Sec.~\ref{fast_VARD} (Algorithm~1). We shall refer to this algorithm as \textbf{``PAR-VARD''} (parallel VARD).

\subsection{Necessary Optimality Conditions}
%We present the necessary Karush-Kuhn-Tucker (KKT) optimality conditions \cite{Boyd} for minimizing the free variational energy (FVE) in \eqref{F}--\eqref{F_III}, for the minimization problems in \eqref{backward}-- \eqref{forward}, and for the auxiliary problems in \eqref{min_S_m}--\eqref{min_S_v} used by PAR-VARD.  
%\\
\begin{proposition}
\label{Prop_KKT_AM}
\begin{enumerate}
\item The necessary Karush-Kuhn-Tucker (KKT) optimality conditions \cite{Boyd} for $(\bd{m},\bd{v})$ to be a solution to \eqref{backward}  are given by  
\begin{align}
&b^y_j +\sum_k\psi_{kj}d_k/\gamma^{(t)}_k - b_j \,\,\,  
\bigg \{ \begin{array}{lll} 
&\hspace{-1em}=0    &\forall j \text { s.t. } m_j>0 \\
&\hspace{-1em} \geq 0    &\forall j \text{ s.t. } m_j=0
\end{array},  \label{KKT1} \\
&\tilde{b}=\xi^{(t)}_j-1/v_j, \quad \forall j, \label{KKT2}
\end{align}
where $b^y_j, b_j, \tilde{b}_j$ are defined by \eqref{b} with $m_j^{(t+1)}$ and $v_j^{(t+1)}$ replaced by $m_j$ and $v_j$, respectively; $d_k$ is defined by $d^{(t)}_k$ in \eqref{d_f} with $m^{(t)}_j$ replaced by $m_j$;  $\xi^{(t)}_j$ is defined in  \eqref{xi}. 
\item The necessary KKT condition for $\bd{\gamma}$ to be a solution to \eqref{forward} is given by
\begin{align}
\bd{\gamma}=(\bd{\Psi}\bd{m}^{(t+1)}) \odot (\bd{\Psi}\bd{m}^{(t+1)})+(\bd{\Psi}\odot\bd{\Psi})\,\bd{v}^{(t+1)}. \label{KKT3}
\end{align} 
\item The necessary KKT conditions for $(\bd{m},\bd{v},\bd{\gamma})$ to be a minimizer of the FVE objective in \eqref{F}-\eqref{F_III} subject to $\bd{m} \succeq 0$  are given by
\begin{align}
&b^y_j +f_j - b_j \,\,\,  
\bigg \{ \begin{array}{lll} 
&\hspace{-1em}=0    &\forall j \text { s.t. } m_j>0 \\
&\hspace{-1em} \geq 0    &\forall j \text{ s.t. } m_j=0
\end{array},  \label{KKT_G1} \\
&\tilde{b}=\xi_j-1/v_j, \quad \forall j, \label{KKT_G2} \\
&\bd{\gamma}=(\bd{\Psi}\bd{m}) \odot (\bd{\Psi}\bd{m})+(\bd{\Psi}\odot\bd{\Psi})\,\bd{v}, \label{KKT_G3}
\end{align}
where $\xi_j$ is defined as $\xi^{(t)}_j$ in \eqref{xi} with $\gamma_j^{(t)}$ replaced by $\gamma_j$; $f_j$ is defined as $f^{(t)}_j$ in \eqref{d_f} with $d_k^{(t)}$ and $\gamma_k^{(t)}$ replaced by $d_k$ and $\gamma_k$, respectively (see also definitions after \eqref{KKT2}). 
\end{enumerate}
\end{proposition}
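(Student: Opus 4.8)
The plan is to treat each of the three items as a first-order (KKT) stationarity statement for a smooth problem over a convex feasible set. For item 1, the objective of \eqref{backward} is $\mathcal{F}(\bd{m},\bd{v},\bd{\gamma}^{(t)})=\mathcal{F}_1+\mathcal{F}_2+\mathcal{F}_3$, which is $C^1$ on the open set $\{\bd{v}\succ 0\}$ (and $+\infty$ elsewhere by the extended-value convention stated after \eqref{F}), and the only explicit inequality constraints, $m_j\ge 0$ and $v_j\ge 0$, are affine, so a constraint qualification holds automatically and the KKT conditions are necessary at any local minimizer. The first step is to observe that the barrier term $-\tfrac12\sum_j\log v_j$ in \eqref{F_III} forces $v_j>0$ at any minimizer, so the constraint $\bd{v}\succeq 0$ is never active and its multipliers vanish; writing the Lagrangian $L=\mathcal{F}-\sum_j\lambda_j m_j$ with $\lambda_j\ge 0$ and $\lambda_j m_j=0$, the conditions therefore collapse to $\partial_{v_j}\mathcal{F}=0$ for every $j$ together with $\partial_{m_j}\mathcal{F}=\lambda_j\ge 0$ and complementary slackness for the mean variables.

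The second step is a direct differentiation. Using \eqref{F_I}--\eqref{F_III} together with the shorthand in \eqref{p}--\eqref{b}, \eqref{d_f} and \eqref{xi}, one computes $\partial_{m_j}\mathcal{F}_1=\sum_i\phi_{ij}(y_i-\mu_i^{(t)})=b^y_j-b_j$ (the sign coming from Beer's law via \eqref{mu}), $\partial_{m_j}\mathcal{F}_2=\sum_k\psi_{kj}d_k/\gamma^{(t)}_k$, and $\mathcal{F}_3$ is independent of $\bd{m}$; feeding this into the complementary-slackness dichotomy yields \eqref{KKT1}. Similarly $\partial_{v_j}\mathcal{F}_1=\tilde b_j$ by \eqref{mu} and \eqref{b}, $\mathcal{F}_2$ is independent of $\bd{v}$, and $\partial_{v_j}\mathcal{F}_3$ involves $\xi^{(t)}_j$ and the $-\tfrac1{2v_j}$ term; setting the sum to zero and solving for $1/v_j$ gives \eqref{KKT2}. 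For item 2 the reasoning is the same but simpler: in \eqref{forward} one minimizes $\mathcal{F}_2(\bd{m}^{(t+1)},\bd{\gamma})+\mathcal{F}_3(\bd{v}^{(t+1)},\bd{\gamma})$ over $\bd{\gamma}\succeq 0$, and since this map tends to $+\infty$ both as any $\gamma_k\to 0^+$ (through the $\gamma_k^{-1}$ poles) and as $\gamma_k\to\infty$ (through $\tfrac12\log\gamma_k$), every minimizer is interior, so the necessary condition is simply $\nabla_{\bd{\gamma}}(\mathcal{F}_2+\mathcal{F}_3)=\bd{0}$; clearing $\gamma_k^{-2}$ reproduces exactly the computation already carried out after \eqref{forward} and gives \eqref{KKT3}.

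Item 3 is then the combination: a local minimizer of the full FVE over $\{\bd{m}\succeq 0,\ \bd{v}\succ 0,\ \bd{\gamma}\succ 0\}$ must satisfy the stationarity conditions in all three blocks simultaneously, which are \eqref{KKT1}--\eqref{KKT2} with the running value $\bd{\gamma}^{(t)}$ replaced by the (now free) optimal $\bd{\gamma}$ inside $f_j$ and $\xi_j$ --- giving \eqref{KKT_G1}--\eqref{KKT_G2} --- together with the $\bd{\gamma}$-stationarity \eqref{KKT_G3}; I would also remark that, because the FVE is jointly non-convex in $(\bd{m},\bd{v},\bd{\gamma})$ (as noted after \eqref{sol_forward}), these are only necessary and not sufficient. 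The only genuinely delicate points are the easy but necessary observations that the positivity constraints on $\bd{v}$ and $\bd{\gamma}$ are never binding (so no extra multipliers survive) and that the active constraints are affine (so a constraint qualification is free); the remainder is careful bookkeeping of the factors of $\tfrac12$, the exponents coming from Beer's law, and the index sums when matching $\partial\mathcal{F}$ to $b^y_j$, $b_j$, $\tilde b_j$, $f_j$, $d_k$ and $\xi_j$. That bookkeeping, rather than any conceptual issue, is where the main effort lies.
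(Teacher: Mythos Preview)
Your proposal is correct and follows essentially the same approach as the paper: both arguments compute the partial derivatives of $\mathcal{F}$ in \eqref{F}--\eqref{F_III} with respect to $m_j$, $v_j$, and $\gamma_k$ and then read off the KKT stationarity/complementary-slackness conditions. The paper's proof is a one-line ``plug in and differentiate,'' whereas you additionally spell out why a constraint qualification holds (affine constraints) and why the nonnegativity constraints on $\bd{v}$ and $\bd{\gamma}$ are never active (barrier terms), which the paper relegates to a post-proof remark; this is extra care but not a different route.
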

\begin{proof} 
Follows directly from the KKT conditions $\nabla_{m_j}\mathcal{F} \geq 0$ $\forall j$ s.t. $m_j=0$, $\nabla_{m_j}\mathcal{F} = 0$ $\forall j$ s.t. $m_j>0$, $\nabla_{\bd{v}}\mathcal{F}=\bd{0}$, $\nabla_{\bd{\gamma}}\mathcal{F}=\bd{0}$, and by plugging in \eqref{F}--\eqref{F_III}. For (1) substitute $\bd{\gamma}=\bd{\gamma}^{(t)}$ and use only KKT conditions for $(\bd{m},\bd{v})$; For (2) substitute  $(\bd{m},\bd{v})=(\bd{m}^{(t+1)},\bd{v}^{(t+1)})$ and use KKT condition for $\bd{\gamma}$.
\end{proof}
\\
{\color{maroon} Remark 1}: note that $v_j>0$ due to the $-\log v_j$ term in \eqref{F_III}. {\color{maroon} Remark 2}: note that \eqref{KKT3} is the F-step update in \eqref{sol_forward} so it is satisfied at the end of each iteration of PAR-VARD.  
\newpage
\begin{proposition}
\label{Prop_KKT_surrogate}
\begin{enumerate}
\item The necessary KKT conditions for $m_j$ to be a solution to \eqref{min_S_m} are 
\begin{align}
&b^y_j  -b^{(t)}_j \exp{\big[-Z_1(m_j-m^{(t)}_j)\big]} + f^{(t)}_j+2g^{(t)}_j\,(m_j-m^{(t)}_j) \, 
\bigg \{ \begin{array}{lll} 
&\hspace{-1em}=0    &\forall j \text { s.t. } m_j>0 \\
&\hspace{-1em} \geq 0    &\forall j \text{ s.t. } m_j=0
\end{array}, 
\label{m_KKT}
\end{align}
where $b^y_j$, $b^{(t)}_j$, $f_j^{(t)}$, $g^{(t)}_j$  and $Z_1$ are defined in  \eqref{b},\eqref{d_f}, \eqref{g_h} and \eqref{qr}, respectively.  
\item The necessary KKT condition for $v_j$ to be a solution to \eqref{min_S_v} is  
\begin{align}
\tilde{b}^{(t)}_j\exp{\big[Z_1(v_j-v^{(t)}_j)\big]} + \xi^{(t)}_j/2-1/2v_j=0, \label{v_KKT}
\end{align}
where $\tilde{b}^{(t)}, \xi^{(t)}_j$ are defined in \eqref{b} and \eqref{xi}. 
\end{enumerate}
\end{proposition}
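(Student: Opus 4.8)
The proof plan is straightforward: Proposition \ref{Prop_KKT_surrogate} simply records the stationarity/KKT conditions for the two one-dimensional problems \eqref{min_S_m} and \eqref{min_S_v}, whose objectives $\mathcal{S}^{(t)}_{m_j}$ and $\mathcal{S}^{(t)}_{v_j}$ are written explicitly in \eqref{surogate1}--\eqref{surogate2}. Both are smooth on the relevant domains, so the KKT conditions reduce to computing a single derivative and imposing the complementarity condition coming from the constraint $m_j\ge 0$ (and implicitly $v_j>0$).

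For part (1), I would differentiate $\mathcal{S}^{(t)}_{m_j}(m_j)$ in \eqref{surogate1} with respect to $m_j$. The linear term $b^y_j m_j$ contributes $b^y_j$; the exponential term $\tfrac{b^{(t)}_j}{Z_1}\exp[-Z_1(m_j-m^{(t)}_j)]$ contributes $-b^{(t)}_j\exp[-Z_1(m_j-m^{(t)}_j)]$; the term $f^{(t)}_j(m_j-m^{(t)}_j)$ contributes $f^{(t)}_j$; and $g^{(t)}_j(m_j-m^{(t)}_j)^2$ contributes $2g^{(t)}_j(m_j-m^{(t)}_j)$. Summing these gives exactly the left-hand side of \eqref{m_KKT}. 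Since $\mathcal{S}^{(t)}_{m_j}$ is convex in $m_j$ (the exponential and quadratic terms are convex and $g^{(t)}_j\ge 0$ by \eqref{g_h}), the constrained minimization over $m_j\ge 0$ has the stated KKT form: the derivative vanishes at an interior minimizer ($m_j>0$) and is nonnegative at a boundary minimizer ($m_j=0$), which is precisely the complementary-slackness statement in \eqref{m_KKT}.

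For part (2), I would differentiate $\mathcal{S}^{(t)}_{v_j}(v_j)$ in \eqref{surogate2}. The term $\tfrac{\tilde{b}^{(t)}_j}{Z_1}\exp[Z_1(v_j-v^{(t)}_j)]$ contributes $\tilde{b}^{(t)}_j\exp[Z_1(v_j-v^{(t)}_j)]$; the term $\xi^{(t)}_j v_j/2$ contributes $\xi^{(t)}_j/2$; and $-\tfrac12\log v_j$ contributes $-1/(2v_j)$. Setting the sum to zero gives \eqref{v_KKT}. Here the $-\log v_j$ term forces $v_j>0$ at any finite-objective point, so there is no active boundary constraint and the optimality condition is the plain first-order equation $\nabla_{v_j}\mathcal{S}^{(t)}_{v_j}=0$; convexity of $\mathcal{S}^{(t)}_{v_j}$ (sum of an exponential, a linear term, and $-\log$, all convex on $v_j>0$) guarantees this is both necessary and sufficient, and that the minimizer is unique.

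There is essentially no obstacle here — the only thing to be mildly careful about is bookkeeping: confirming the sign in the derivative of the exponential terms (the $m_j$ exponential carries a $-Z_1$ inside, producing the minus sign in \eqref{m_KKT}, while the $v_j$ exponential carries $+Z_1$, producing no sign flip in \eqref{v_KKT}), and noting that the constant $h^{(t)}$ and the terms independent of $m_j$ (such as $f^{(t)}_j m^{(t)}_j$ and $g^{(t)}_j (m^{(t)}_j)^2$ after expansion) drop out under differentiation. Thus the proof is a one-line verification: "Differentiate \eqref{surogate1} and \eqref{surogate2} with respect to $m_j$ and $v_j$ respectively, impose the nonnegativity constraint on $m_j$ via complementary slackness, and invoke convexity of the one-dimensional surrogates."
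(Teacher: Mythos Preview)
Your proposal is correct and follows essentially the same approach as the paper: differentiate the one-dimensional surrogates \eqref{surogate1}--\eqref{surogate2} and impose the KKT conditions (stationarity at interior points, nonnegative gradient at the boundary $m_j=0$, and $\nabla_{v_j}\mathcal{S}^{(t)}_{v_j}=0$ since $v_j>0$ is forced by the $-\log v_j$ term). The paper's proof is a single sentence to this effect; your term-by-term derivative computation and convexity remarks are consistent elaborations of it.
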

\begin{proof}
Conditions \eqref{m_KKT}--\eqref{v_KKT} follow directly from the KKT conditions: $\nabla_{m_j}\mathcal{S}_{m_j} \geq 0$ $\forall j$ s.t. $m_j=0$, $\nabla_{m_j}\mathcal{S}_{m_j} = 0$ $\forall j$ s.t. $m_j >0$, $\nabla_{v_j}\mathcal{S}_{v_j}=0$ $\forall j$, and by plugging in \eqref{surogate1}--\eqref{surogate2}. 
\end{proof}
\\
{\color{maroon} Remark}: Note that $v_j>0$ due to the $-\log v_j$ term in \eqref{surogate2}.

\subsection{Definitions}
To simplify the convergence study in Sec.~\ref{sec_conv}, we introduce a few definitions concerning the solutions to \eqref{min_S_m}--\eqref{min_S_v}.  
\begin{definition}
Let $\bar{m}^{(t)}_j$ be a solution to \eqref{min_S_m} but without the non-negativity constraint.  The corresponding solution to \eqref{min_S_m} with the non-negativity constraint can be expressed as 
\begin{align}
m^{(t+1)}_j =\bigg[\bar{m}^{(t)}_j-\frac{1}{Z_1}\log\big (\frac{\beta^{(t)}_j}{b^{(t)}_j} \big)\bigg ]_+, \label{m_update1}
\end{align}
where $[x]_+=\max [0,x]$ and $\beta^{(t)}_j$ is defined as 
\begin{align}
\beta^{(t)}_j \triangleq b^y_j+f^{(t)}_j+2g^{(t)}_j(\bar{m}^{(t)}_j-m^{(t)}_j). \label{beta_m}
\end{align}
\end{definition}
{\color{maroon} Remark}: note from \eqref{b} that $b^{(t)}_j\geq 0$, and from \eqref{m_KKT} and \eqref{beta_m} it follows that $\beta^{(t)}_j \geq 0$,  so the logarithm in \eqref{m_update1} is well defined.  
\begin{definition}
The solution  to \eqref{min_S_m} can alternatively be expressed as
\begin{align}
m^{(t+1)}_j = m^{(t)}_j-\frac{1}{Z^{(t)}_j}\log\big (\frac{\beta^{(t)}_j}{b^{(t)}_j} \big), \label{m_update}
\end{align}\\[-2em]
where we define
\begin{align}
Z^{(t)}_j \triangleq \left\{ \begin{array}{ll}
         Z_1 & \mbox{for $\bar{m}^{(t)}_j \geq 0$} \\
         \frac{1}{m^{(t)}_j}\log\big (\frac{\beta^{(t)}_j}{b^{(t)}_j} \big) & \mbox{for $\bar{m}^{(t)}_j < 0$ and \, $m^{(t)}_j>0$} \\
         \infty & \mbox{for $\bar{m}^{(t)}_j <0$ and  \, $m^{(t)}_j=0$}. \end{array} \right.  \label{Z_tilde_m}
\end{align}
\end{definition}
{\color{maroon} Remark 1}: introducing $Z^{(t)}_j$ in \eqref{m_update} has the same effect as thresholding in \eqref{m_update1}. {\color{maroon} Remark 2}: note that since $m^{(t+1)}\ge \bar{m}^{(t)}$ we have $Z^{(t)}_j \geq Z_1$. 
%Note that $\bar{m}^{(t)}_j < 0$ and $m^{(t)}_j>0$ occurs when $\beta^{(t)}_j>b^{(t)}_j$ so $Z^{(t)}_j \geq0$. 
We also introduce a similar definition for the variance. 
\begin{definition}
The solution to \eqref{min_S_v} can be expressed as
\begin{align}
v^{(t+1)}_j = v^{(t)}_j+\frac{1}{Z_1}\log\big (\frac{\tilde{\beta}^{(t)}_j}{\tilde{b}^{(t)}_j} \big) , \label{v_update}
\end{align}\\[-1.5em]
where  $\tilde{\beta}^{(t)}_j$ is defined as
\begin{align}
\tilde{\beta}^{(t)}_j \triangleq \frac{1}{2v^{(t+1)}_j}-\xi^{(t+1)}_j/2.  \label{beta_v}
\end{align}
\end{definition}
{\color{maroon} Remark}: since $\tilde{b}^{(t)}_j \geq 0$ and $\xi^{(t)}_j \geq 0$, it follows from \eqref{v_KKT} that $v^{(t+1)}_j > 0$. 

%Finally, we introduce two last definitions below. 
\begin{definition}
The I-divergence between two vectors $\bd{p},\bd{q}\in\Reals_+^M$ is defined as
\begin{align}
I(\bd{p}||\bd{q})=\sum_{j=1}^M p_j\log(\frac{p_j}{q_j})-p_j+q_j. \label{I_divergence}
\end{align}
We define $\log(\frac{0}{0})=0$. {\color{maroon} Remark}: the I-divergence is nonnegative. 
\end{definition}
\begin{definition}
The Itakura–-Saito (IS) divergence between two vectors $\bd{p},\bd{q}\in\Reals_+^M$ is defined as
\begin{align}
D_{IS}(\bd{p}||\bd{q})=\sum_k[\log(p_k/q_k)+q_k/p_k-1]. \label{IS_divergence}
\end{align}
We define $\log(\frac{0}{0})=0$, and $\frac{0}{0}=1$. {\color{maroon} Remark}: $D_{IS}$ is nonnegative. 
\end{definition}

\subsection{Convergence Analysis} \label{sec_conv}
%We first present Theorem~\ref{theorem_dec}, which asserts the monotonic decrease of the free variational energy (FVE) objective function in \eqref{F}--\eqref{F_III} and also provides an expression for the minimal difference between two following iterations.    
For simplicity we shall denote $\mathcal{F}^{(t)} \triangleq \mathcal{F}(\bd{m}^{(t)},\bd{v}^{(t)},\bd{\gamma}^{(t)})$. 
\\[-0.5em]
\begin{corollary} \label{col}
A fixed point of the iterates of PAR-VARD satisfies the necessary KKT conditions in \eqref{KKT_G1}--\eqref{KKT_G3} and also \eqref{KKT1}--\eqref{KKT3}.
\end{corollary}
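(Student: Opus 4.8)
The plan is to unwind the definition of a fixed point and then match, term by term, the surrogate stationarity conditions of Proposition~\ref{Prop_KKT_surrogate} against the original stationarity conditions of Proposition~\ref{Prop_KKT_AM}, exploiting the single observation that every ``increment'' factor in the surrogate collapses to a constant at a fixed point. First I would fix notation: let $(\bd{m}^*,\bd{v}^*,\bd{\gamma}^*)$ be a fixed point, i.e.\ a point such that running one pass of Algorithm~\ref{alg1} with $(\bd{m}^{(t)},\bd{v}^{(t)},\bd{\gamma}^{(t)})=(\bd{m}^*,\bd{v}^*,\bd{\gamma}^*)$ returns $(\bd{m}^{(t+1)},\bd{v}^{(t+1)},\bd{\gamma}^{(t+1)})=(\bd{m}^*,\bd{v}^*,\bd{\gamma}^*)$. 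Then the auxiliary quantities $p^{(t)}_i,\tilde{p}^{(t)}_i,\mu^{(t)}_i,b^{(t)}_j,\tilde{b}^{(t)}_j,d^{(t)}_k,f^{(t)}_j,g^{(t)}_j,\xi^{(t)}_j$ in \eqref{b}--\eqref{xi} are all evaluated at $(\bd{m}^*,\bd{v}^*,\bd{\gamma}^*)$; I denote them with a star. The key bookkeeping remark is that, since $\bd{\gamma}^{(t)}=\bd{\gamma}^*$ and $(\bd{m},\bd{v})=(\bd{m}^*,\bd{v}^*)$ at a fixed point, these starred quantities coincide with the (nominally un-superscripted) quantities $b_j,\tilde{b}_j,d_k,f_j,\xi_j$ that appear in Proposition~\ref{Prop_KKT_AM} when evaluated there; in particular \eqref{KKT1}--\eqref{KKT2} and \eqref{KKT_G1}--\eqref{KKT_G2} are literally the same relations at such a point.

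Next I would dispose of the $\bd{\gamma}$-component. Since line~13 of Algorithm~\ref{alg1} (equivalently \eqref{sol_forward}) reproduces $\bd{\gamma}^*$, we have $\bd{\gamma}^*=(\bd{\Psi}\bd{m}^*)\odot(\bd{\Psi}\bd{m}^*)+(\bd{\Psi}\odot\bd{\Psi})\bd{v}^*$, which is exactly \eqref{KKT_G3}, and also \eqref{KKT3} with $\bd{m}^{(t+1)}=\bd{m}^*$, $\bd{v}^{(t+1)}=\bd{v}^*$. Then I handle the $\bd{m}$-component: because $m_j^*$ solves \eqref{min_S_m}, Proposition~\ref{Prop_KKT_surrogate}(1) gives that it satisfies \eqref{m_KKT}. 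At the fixed point $m_j^*-m_j^{(t)}=0$, so $\exp[-Z_1(m_j^*-m_j^{(t)})]=1$ and $2g^{(t)}_j(m_j^*-m_j^{(t)})=0$; hence \eqref{m_KKT} reduces to $b^y_j-b^*_j+f^*_j=0$ for every $j$ with $m_j^*>0$ and $b^y_j-b^*_j+f^*_j\ge 0$ for every $j$ with $m_j^*=0$, which is precisely \eqref{KKT_G1} evaluated at $(\bd{m}^*,\bd{v}^*,\bd{\gamma}^*)$, and (using $\bd{\gamma}^{(t)}=\bd{\gamma}^*$) also \eqref{KKT1}. Analogously, $v_j^*$ solves \eqref{min_S_v}, so by Proposition~\ref{Prop_KKT_surrogate}(2) it satisfies \eqref{v_KKT}; at the fixed point $\exp[Z_1(v_j^*-v_j^{(t)})]=1$, so \eqref{v_KKT} collapses to the stationarity relation $\tilde{b}^*_j+\xi^*_j/2-1/(2v_j^*)=0$, i.e.\ to \eqref{KKT_G2} (equivalently \eqref{KKT2}), with $v_j^*>0$ guaranteed by the Remark following Proposition~\ref{Prop_KKT_surrogate}. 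Collecting the three components, $(\bd{m}^*,\bd{v}^*,\bd{\gamma}^*)$ satisfies \eqref{KKT_G1}--\eqref{KKT_G3} and \eqref{KKT1}--\eqref{KKT3}, which is the claim.

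I do not expect any genuine analytic obstacle here. The only point requiring care is the bookkeeping flagged above: one must verify that the starred auxiliary quantities, built from $\bd{m}^*$ and $\bd{\gamma}^*$, are identical to the bare quantities $b_j,f_j,\tilde{b}_j,\xi_j$ appearing in Proposition~\ref{Prop_KKT_AM}, which holds because $\bd{\gamma}^{(t)}=\bd{\gamma}$ and $(\bd{m}^{(t+1)},\bd{v}^{(t+1)})=(\bd{m},\bd{v})$ at a fixed point, and that the majorization increments $m_j-m_j^{(t)}$ and $v_j-v_j^{(t)}$ genuinely vanish there so that the exponential and polynomial correction terms in \eqref{surogate1}--\eqref{surogate2} drop out. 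An alternative, more conceptual route (which I would mention but not develop) is to invoke the optimization-transfer touching property: $\mathcal{S}^{(t)}$ majorizes $\mathcal{F}(\cdot,\bd{\gamma}^{(t)})$ with equality and matching gradients at $(\bd{m}^{(t)},\bd{v}^{(t)})$, so a reproduced minimizer of $\mathcal{S}^{(t)}$ over $\bd{m}\succeq 0$ is stationary for $\mathcal{F}(\cdot,\bd{\gamma}^{(t)})$ over $\bd{m}\succeq 0$, which combined with the $\bd{\gamma}$ fixed-point identity yields all of \eqref{KKT_G1}--\eqref{KKT_G3}; but since the paper has already isolated the explicit KKT systems in Propositions~\ref{Prop_KKT_AM} and \ref{Prop_KKT_surrogate}, the direct term-by-term comparison above is the shortest path.
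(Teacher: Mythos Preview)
Your proposal is correct and follows essentially the same approach as the paper: substitute the fixed-point identities $m_j=m_j^{(t)}$, $v_j=v_j^{(t)}$, $\gamma_k=\gamma_k^{(t)}$ into the surrogate KKT conditions \eqref{m_KKT}--\eqref{v_KKT} so that the exponential and increment terms collapse, reducing them to \eqref{KKT_G1}--\eqref{KKT_G2} (and \eqref{KKT1}--\eqref{KKT2}), and read off \eqref{KKT_G3}/\eqref{KKT3} from the F-step update. Your version is somewhat more careful about the bookkeeping (spelling out why the starred auxiliary quantities coincide with the unstarred ones in Proposition~\ref{Prop_KKT_AM}), but the argument is the same.
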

\begin{proof}
Substituting a fixed point $m_j=m^{(t)}_j$  and $\gamma_k=\gamma^{(t)}_j$  into \eqref{m_KKT} and \eqref{KKT1} reduces \eqref{m_KKT} to \eqref{KKT_G1} and \eqref{KKT1}. Substituting a fixed point $v_j=v^{(t)}_j$  and $\gamma_k=\gamma^{(t)}_k$  into  \eqref{v_KKT} and \eqref{KKT2} reduces \eqref{v_KKT} to  \eqref{KKT_G2} and \eqref{KKT2}. Recall the definitions after \eqref{KKT2} and \eqref{KKT_G3}. The condition in \eqref{KKT_G3} is obtained by substituting $\bd{m}^{(t+1)}=\bd{m}$ and $\bd{v}^{(t+1)}=\bd{v}$ into \eqref{KKT3}.  
\end{proof}
\\[-0.5em]

The following theorem is due to Degirmenci and O'Sullivan. 
\begin{theorem} 
\label{theorem_dec}
The objective function of \eqref{F}--\eqref{F_III}  decreases monotonically during the iterations of PAR-VARD, and the decrease between subsequent iterations is lower bounded by
\begin{align}
&\mathcal{F}^{(t)}-\mathcal{F}^{(t+1)} \geq I\bigg (\frac{\bd{\beta}^{(t)}}{\bd{Z}^{(t)}}\big|\big|\frac{\bd{b}^{(t)}}{\bd{Z}^{(t)}}\bigg)+I\bigg (\frac{\tilde{\bd{\beta}}^{(t)}}{Z_1}\big|\big|\frac{\tilde{\bd{b}}^{(t)}}{Z_1}\bigg) +\sum_j g^{(t)}_j\bigg[\frac{1}{Z^{(t)}_j}\log\bigg (\frac{\beta^{(t)}_j}{b^{(t)}_j} \bigg)\bigg]^2 +\frac{1}{2}D_{IS}(\bd{\gamma}^{(t)}||\bd{\gamma}^{(t+1)}),  \label{B_diff}
\end{align}
where $\bd{\beta}^{(t)}$, $\tilde{\bd{\beta}}^{(t)}$, and $\bd{Z}^{(t)}$ are vectors with components $\beta^{(t)}_j$, 	
$\tilde{\beta}^{(t)}_j$, and $Z^{(t)}_j$, defined in \eqref{beta_m},\eqref{beta_v} and \eqref{Z_tilde_m}, respectively. $\bd{b}^{(t)}$, $\tilde{\bd{b}}^{(t)}$ are vectors with components $b^{(t)}_j$ and $\tilde{b}^{(t)}_j$ defined in \eqref{b}, and $g^{(t)}_j$ is defined in \eqref{g_h} ($g^{(t)}_j\geq 0$). Dividing two vectors should be interpreted as done elementwise. 
\end{theorem}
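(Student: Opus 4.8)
The plan is to exploit the majorization structure established in Section~\ref{fast_VARD}: $\mathcal{S}^{(t)}$ is a global majorizer of $\mathcal{F}$ that touches it at $(\bd{m}^{(t)},\bd{v}^{(t)})$ (with $\bd{\gamma}$ held at $\bd{\gamma}^{(t)}$), so the chain $\mathcal{F}^{(t)} = \mathcal{S}^{(t)}(\bd{m}^{(t)},\bd{v}^{(t)}) \geq \mathcal{S}^{(t)}(\bd{m}^{(t+1)},\bd{v}^{(t+1)}) \geq \mathcal{F}(\bd{m}^{(t+1)},\bd{v}^{(t+1)},\bd{\gamma}^{(t)}) \geq \mathcal{F}^{(t+1)}$ gives monotone decrease immediately (the last inequality because the F-step in \eqref{sol_forward} minimizes over $\bd{\gamma}$). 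The real work is the \emph{quantitative} bound \eqref{B_diff}, so I would track each inequality in that chain and lower-bound the gap it produces. First I would write $\mathcal{F}^{(t)}-\mathcal{F}(\bd{m}^{(t+1)},\bd{v}^{(t+1)},\bd{\gamma}^{(t)})$ as the sum of (i) the surrogate-minimization gap $\mathcal{S}^{(t)}(\bd{m}^{(t)},\bd{v}^{(t)}) - \mathcal{S}^{(t)}(\bd{m}^{(t+1)},\bd{v}^{(t+1)})$ and (ii) the majorization slack $\mathcal{S}^{(t)}(\bd{m}^{(t+1)},\bd{v}^{(t+1)}) - \mathcal{F}(\bd{m}^{(t+1)},\bd{v}^{(t+1)},\bd{\gamma}^{(t)})$, and then add (iii) the F-step gain $\mathcal{F}(\bd{m}^{(t+1)},\bd{v}^{(t+1)},\bd{\gamma}^{(t)}) - \mathcal{F}^{(t+1)}$.

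For term (iii), since the F-step solves $\min_{\bd\gamma} D_{KL}[q\|p(\cdot|\bd\gamma)]$ and only $\mathcal{F}_2+\mathcal{F}_3$ depend on $\bd\gamma$, a direct computation of $\mathcal{F}_2(\bd m^{(t+1)},\bd\gamma^{(t)})+\mathcal{F}_3(\bd v^{(t+1)},\bd\gamma^{(t)}) - \mathcal{F}_2(\bd m^{(t+1)},\bd\gamma^{(t+1)})-\mathcal{F}_3(\bd v^{(t+1)},\bd\gamma^{(t+1)})$, using the explicit forms \eqref{F_II}--\eqref{F_III} and the update \eqref{sol_forward} (which says $\gamma_k^{(t+1)}$ equals the bracketed quantity $d_k^2 + \sum_j\psi_{kj}^2 v_j$), collapses exactly to $\tfrac12\sum_k[\log(\gamma_k^{(t)}/\gamma_k^{(t+1)}) + \gamma_k^{(t+1)}/\gamma_k^{(t)} - 1] = \tfrac12 D_{IS}(\bd\gamma^{(t)}\|\bd\gamma^{(t+1)})$; this is the standard "Gaussian KL minimized over a diagonal covariance yields an Itakura–Saito gap" identity. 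For term (i), I would separate $\mathcal{S}^{(t)} = \sum_j \mathcal{S}^{(t)}_{m_j} + \sum_j \mathcal{S}^{(t)}_{v_j}$ and handle the two families of scalar problems independently, plugging the closed-form minimizers \eqref{m_update} and \eqref{v_update} back in. For the mean part, using the KKT relation \eqref{m_KKT}, the quadratic-plus-exponential structure of \eqref{surogate1} should yield $\mathcal{S}^{(t)}_{m_j}(m_j^{(t)}) - \mathcal{S}^{(t)}_{m_j}(m_j^{(t+1)})$ equal to an I-divergence term $\tfrac{\beta_j^{(t)}}{Z_j^{(t)}}\log(\beta_j^{(t)}/b_j^{(t)}) - \tfrac{\beta_j^{(t)}-b_j^{(t)}}{Z_j^{(t)}}$ — i.e. the $j$th summand of $I(\bd\beta^{(t)}/\bd Z^{(t)} \| \bd b^{(t)}/\bd Z^{(t)})$ — plus the leftover quadratic term $g_j^{(t)}[\tfrac{1}{Z_j^{(t)}}\log(\beta_j^{(t)}/b_j^{(t)})]^2$ coming from the $(m_j - m_j^{(t)})^2$ piece; the $Z_j^{(t)}$ (rather than $Z_1$) appears precisely because of the thresholding encoded in Definition~2. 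The variance part, using \eqref{v_KKT} and \eqref{beta_v}, gives the $I(\tilde{\bd\beta}^{(t)}/Z_1 \| \tilde{\bd b}^{(t)}/Z_1)$ term analogously (here there is no quadratic remainder and the constant is $Z_1$, not $Z_j^{(t)}$, since $v$ has no non-negativity obstruction). Term (ii), the majorization slack, is nonnegative by Lemma~\ref{lemma} (equality would require $(\bd m^{(t+1)},\bd v^{(t+1)}) = (\bd m^{(t)},\bd v^{(t)})$ or affinity), so it can simply be dropped in the lower bound.

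The main obstacle is the mean-update bookkeeping in term (i): correctly accounting for the non-negativity constraint via the effective step size $Z_j^{(t)}$, verifying that substituting \eqref{m_update} into the exponential term of \eqref{surogate1} produces exactly $\exp[-Z_1(m_j^{(t+1)}-m_j^{(t)})] = (\beta_j^{(t)}/b_j^{(t)})^{-Z_1/Z_j^{(t)}}$ in a way that still assembles cleanly into an I-divergence after the KKT substitution, and confirming the residual quadratic term has the stated sign and form. Since this theorem is attributed to Degirmenci and O'Sullivan, I would expect the actual proof to follow their optimization-transfer template (cf.\ \cite{OSullivan}): establish the surrogate-gap identity one scalar coordinate at a time, collect the pieces, and invoke nonnegativity of the discarded majorization slack and of $g_j^{(t)}$ to obtain \eqref{B_diff}. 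Everything else — monotone decrease, and nonnegativity of each term on the right — then follows because I-divergence and IS-divergence are nonnegative (Definitions~5 and 6) and $g_j^{(t)}\ge 0$ by \eqref{g_h}.
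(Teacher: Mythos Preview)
Your high-level decomposition is sound and your treatment of term (iii) is exactly what the paper does: the F-step gain collapses to $\tfrac12 D_{IS}(\bd{\gamma}^{(t)}\|\bd{\gamma}^{(t+1)})$ by direct substitution of \eqref{sol_forward} into \eqref{F_II}--\eqref{F_III}. Monotone decrease also follows immediately from your majorization chain.

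However, the paper does \emph{not} organize the quantitative bound as ``surrogate gap (i) plus discarded majorization slack (ii).'' Instead it writes the exact difference $\mathcal{F}(\bd m^{(t)},\bd v^{(t)},\bd\gamma^{(t)})-\mathcal{F}(\bd m^{(t+1)},\bd v^{(t+1)},\bd\gamma^{(t)})$ and then re-applies Lemma~\ref{lemma} to the exponential and quadratic terms \emph{with $Z_j^{(t)}$ in place of $Z_1$} for the mean part (this is licit because $Z_j^{(t)}\ge Z_1$ keeps the weight condition \eqref{sum_qr}). That substitution is the device that makes the constrained case assemble cleanly: it produces $\exp[-Z_j^{(t)}(m_j^{(t+1)}-m_j^{(t)})]=\beta_j^{(t)}/b_j^{(t)}$ with exponent exactly $1$, rather than the fractional power $(\beta_j^{(t)}/b_j^{(t)})^{Z_1/Z_j^{(t)}}$ you would get by plugging \eqref{m_update} into the original surrogate \eqref{surogate1}. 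Your proposal correctly flags this as ``the main obstacle'' but does not supply the fix; the paper's fix is to change the bound, not to massage the power. A second algebraic ingredient you omit is needed on the variance side: after substitution one is left with a $\tfrac12\log(v_j^{(t+1)}/v_j^{(t)})$ term, and the paper invokes $\log x\ge 1-1/x$ to convert it into $\tfrac{1}{2v_j^{(t+1)}Z_1}\log(\tilde\beta_j^{(t)}/\tilde b_j^{(t)})$, which then combines with the $-\xi_j^{(t)}/(2Z_1)$ coefficient to yield the $\tilde\beta_j^{(t)}/Z_1$ factor in the second I-divergence. Without that inequality the variance piece does not reduce to $I(\tilde{\bd\beta}^{(t)}/Z_1\|\tilde{\bd b}^{(t)}/Z_1)$.

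In short: your plan gives monotonicity and the IS term correctly, but the stated I-divergence lower bounds require (a) re-running the convex decomposition with the pixel-dependent $Z_j^{(t)}$ rather than reusing $\mathcal{S}^{(t)}$, and (b) the scalar inequality $\log x\ge 1-1/x$ for the $\log v$ term. Both are present in the paper's proof and absent from your sketch.
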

\begin{proof}
The proof is very technical and therefore moved to Appendix~\ref{apx_A}. 
\end{proof}
\\
Several properties follow from Theorem~\ref{theorem_dec} as stated in Theorem~\ref{theorem_connect}.
\\
\begin{theorem} \label{theorem_connect}
Assume  that $\mathcal{F}^{(1)}$ is finite. Then \\
(a) The I-divergence  $I(\frac{\bd{\beta}^{(t)}}{\bd{Z}^{(t)}}||\frac{\bd{b}^{(t)}}{\bd{Z}^{(t)}})$ converges to zero. \\
(b) The I-divergence  $I(\frac{\tilde{\bd{\beta}}^{(t)}}{Z_1}||\frac{\tilde{\bd{b}}^{(t)}}{Z_1})$ converges to zero. \\[0.3em]
(c) The set of limit points of the mean iterates $\bd{m}^{(t)}$ is a connected set. \\[0.3em]
(d) The set of limit points of the variance iterates $\bd{v}^{(t)}$ is a connected set.
\end{theorem}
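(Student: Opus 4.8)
The plan is to let Theorem~\ref{theorem_dec} do the heavy lifting. Since $\mathcal{F}=\mathcal{F}_1+(\mathcal{F}_2+\mathcal{F}_3)$ with $\mathcal{F}_1\ge 0$ --- it is a sum of terms $y_i\sum_j\phi_{ij}m_j+\mu_i$ with $y_i,\phi_{ij},m_j\ge 0$ and $\mu_i>0$, cf.~\eqref{F_I}, \eqref{mu} --- and $\mathcal{F}_2+\mathcal{F}_3=D_{KL}[q\|p]\ge 0$, the objective is bounded below by $0$. Together with the monotone decrease of Theorem~\ref{theorem_dec} and the hypothesis $\mathcal{F}^{(1)}<\infty$, this makes $\{\mathcal{F}^{(t)}\}$ convergent, hence $\mathcal{F}^{(t)}-\mathcal{F}^{(t+1)}\to 0$. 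The right-hand side of \eqref{B_diff} is a sum of four nonnegative quantities, so each is squeezed to $0$; in particular the two I-divergence terms vanish, which is exactly (a) and (b) (and, since each coordinate summand of an I-divergence of nonnegative vectors is itself nonnegative, each such summand vanishes too --- a fact I will reuse below).

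For (c) and (d) I would first show the iterates $(\bd{m}^{(t)},\bd{v}^{(t)},\bd{\gamma}^{(t)})$ remain in a compact set. Under the standing non-degeneracy assumption that every pixel/voxel is probed by some ray carrying a positive count, the term $\sum_i y_i\sum_j\phi_{ij}m_j$ in \eqref{F_I} is coercive on $\{\bd{m}\succeq 0\}$, so $\mathcal{F}_1\le\mathcal{F}^{(t)}\le\mathcal{F}^{(1)}$ bounds $\|\bd{m}^{(t)}\|$; then the ``mean-type'' projections $p_i^{(t)}$ of \eqref{p} are bounded above, so from $\mathcal{F}_1\ge\sum_i\mu_i^{(t)}$ and \eqref{mu} the ``variance-type'' projections $\tilde{p}_i^{(t)}=\sum_j\phi_{ij}^2 v_j^{(t)}$ are bounded, which (again by non-degeneracy) bounds $\|\bd{v}^{(t)}\|$; and $\bd{\gamma}^{(t)}$ is then bounded through the F-step formula \eqref{sol_forward} (moreover $\gamma_k^{(t)}\ge v_k^{(t)}>0$ because $\psi_{kk}=1$). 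On this compact set two positivity facts hold: the weights $g_j^{(t)}=Z_2\sum_k|\psi_{kj}|/(2\gamma_k^{(t)})$ in \eqref{g_h} are bounded below by some $\underline{g}>0$ (since $\gamma_k^{(t)}$ is bounded above), and the back-projections $b_j^{(t)}=\sum_i\phi_{ij}\mu_i^{(t)}$, $\tilde{b}_j^{(t)}=\sum_i\phi_{ij}^2\mu_i^{(t)}/2$ of \eqref{b} lie in a compact subinterval of $(0,\infty)$ (because each $\mu_i^{(t)}$ does and each relevant column of $\bd{\Phi}$ is nonzero).

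With this in place I would show the successive iterate differences vanish. From the mean update \eqref{m_update} one has $m_j^{(t)}-m_j^{(t+1)}=(Z_j^{(t)})^{-1}\log(\beta_j^{(t)}/b_j^{(t)})$, so the third term of \eqref{B_diff} equals $\sum_j g_j^{(t)}(m_j^{(t+1)}-m_j^{(t)})^2\to 0$; since $g_j^{(t)}\ge\underline{g}>0$, this gives $\|\bd{m}^{(t+1)}-\bd{m}^{(t)}\|\to 0$. From the variance update \eqref{v_update}, $v_j^{(t+1)}-v_j^{(t)}=Z_1^{-1}\log(\tilde{\beta}_j^{(t)}/\tilde{b}_j^{(t)})$; writing the $j$-th summand of the I-divergence in (b) as $(\tilde{b}_j^{(t)}/Z_1)\,\psi(\tilde{\beta}_j^{(t)}/\tilde{b}_j^{(t)})$ with $\psi(r)=r\log r-r+1$, and using that $\tilde{b}_j^{(t)}/Z_1$ is bounded away from $0$ while $\{r:\psi(r)\le\delta\}$ shrinks to $\{1\}$ as $\delta\to 0$, we get $\tilde{\beta}_j^{(t)}/\tilde{b}_j^{(t)}\to 1$, hence $\|\bd{v}^{(t+1)}-\bd{v}^{(t)}\|\to 0$. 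A classical theorem of Ostrowski (the same device underlying global-convergence analyses of monotone algorithms) now says that a bounded sequence in $\Reals^p$ whose consecutive terms converge together has a nonempty, compact, and connected set of cluster points; applying it to $\{\bd{m}^{(t)}\}$ yields (c) and to $\{\bd{v}^{(t)}\}$ yields (d).

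The main obstacle is precisely this last upgrade: a vanishing I-divergence $I(\bd{a}^{(t)}\|\bd{b}^{(t)})\to 0$ does \emph{not} by itself force the coordinate ratios $a_j^{(t)}/b_j^{(t)}\to 1$ --- this fails whenever numerator and denominator both collapse to $0$ --- so one genuinely needs the compactness argument and the uniform positivity of $b_j^{(t)}$, $\tilde{b}_j^{(t)}$, and $g_j^{(t)}$ on the sublevel set $\{\mathcal{F}\le\mathcal{F}^{(1)}\}$, all of which trace back to the non-degeneracy of $\bd{\Phi}$. That bookkeeping, rather than any single slick inequality, is the technical heart of the proof; the rest is the standard monotone-descent / Ostrowski machinery.
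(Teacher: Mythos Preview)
Your argument for (a)--(b) is essentially the paper's: a bounded monotone sequence has vanishing successive differences, and the nonnegative summands on the right of \eqref{B_diff} are squeezed to zero. (The paper phrases it as a telescoping sum with a finite upper bound, but the content is identical.)

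For (c)--(d) you take a genuinely different route. The paper's proof is one line: it invokes Pinsker's inequality to pass from $I(\bd{\beta}^{(t)}/\bd{Z}^{(t)}\|\bd{b}^{(t)}/\bd{Z}^{(t)})\to 0$ to $\|\bd{\beta}^{(t)}-\bd{b}^{(t)}\|_1\to 0$ (and similarly for the tilded quantities), then reads off $m_j^{(t+1)}-m_j^{(t)}\to 0$ and $v_j^{(t+1)}-v_j^{(t)}\to 0$ from \eqref{m_update}--\eqref{v_update}. Your approach instead (i) establishes compactness of the iterates up front (the paper defers this to Theorem~\ref{theorem_global}(a)), (ii) for the mean, bypasses Pinsker entirely by recognizing that the \emph{third} term in \eqref{B_diff} is literally $\sum_j g_j^{(t)}(m_j^{(t+1)}-m_j^{(t)})^2$ and exploiting the uniform lower bound $g_j^{(t)}\ge\underline{g}>0$, and (iii) for the variance, extracts $\tilde{\beta}_j^{(t)}/\tilde{b}_j^{(t)}\to 1$ from the coordinatewise I-divergence via the lower bound on $\tilde{b}_j^{(t)}$. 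Your route is longer but more self-contained: the paper's Pinsker step is stated for probability vectors (Cover--Thomas Lemma~12.6.1), not for the unnormalized I-divergence here, and the subsequent passage from $\|\bd{\beta}^{(t)}-\bd{b}^{(t)}\|_1\to 0$ to $\log(\beta_j^{(t)}/b_j^{(t)})\to 0$ tacitly needs exactly the positivity bounds you spell out. In effect, you have surfaced and discharged the bookkeeping that the paper's proof leaves implicit; your use of the $g_j^{(t)}$-term for the mean is arguably cleaner than the Pinsker detour.
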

\begin{proof}
(a)-(b): From theorem~\ref{theorem_dec} we have 
\begin{align}
&\mathcal{F}^{(1)}-\mathcal{F}^{(t)} =\sum_{s=1}^{t-1} \big [\mathcal{F}^{(s)}-\mathcal{F}^{(s+1)}\big ] \geq\sum_{s=1}^{t-1} I\bigg (\frac{\bd{\beta}^{(s)}}{\bd{Z}^{(s)}}\big|\big|\frac{\bd{b}^{(s)}}{\bd{Z}^{(s)}}\bigg)
+\sum_{s=1}^{t-1} I\bigg (\frac{\tilde{\bd{\beta}}^{(s)}}{Z_1}\big|\big|\frac{\tilde{\bd{b}}^{(s)}}{Z_1}\bigg) \nonumber \\
& +\sum_{s=1}^{t-1}\sum_j g^{(s)}_j\bigg[\frac{1}{Z^{(s)}_j}\log\bigg (\frac{\beta^{(s)}_j}{b^{(s)}_j} \bigg)\bigg]^2 +\sum_{s=1}^{t-1}\frac{1}{2}D_{IS}(\bd{\gamma}^{(s)}||\bd{\gamma}^{(s+1)}). \label{temp_proof}
\end{align}
As $t\rightarrow \infty$ all term on the right-hand side of \eqref{temp_proof} must go to zero since they are nonnegative and have a finite positive upper bound due to the fact that $\mathcal{F}^{(1)}$ is finite and due to the monotonically decreasing and positive sequence $\mathcal{F}^{(t)}$.
\\
(c)-(d): Connectedness of the limit set of the iterates follows from convergence of the I-divergences to zero which implies by Pinsker's inequality \cite{cover}[Lemma 12.6.1] that $\|\bd{\beta}^{(t)}-\bd{b}^{(t)}\|_1 \rightarrow 0$ and $\|\tilde{\bd{\beta}}^{(t)}-\tilde{\bd{b}}^{(t)}\|_1 \rightarrow 0$. This together with \eqref{m_update} and \eqref{v_update} implies that $m^{(t+1)}_j-m^{(t)}_j\rightarrow 0$ $\forall j$, $v^{(t+1)}_j-v^{(t)}_j\rightarrow 0$ $\forall j$. 
\end{proof}
\\ 
\bigskip
We are now able to state the guarantees for global\footnote{Please see footnote~4 on page~6.} convergence of PAR-VARD. 

\begin{theorem}[Global Convergence Theorem] \label{theorem_global}
Let $\{\bd{z}^{(t)}\}_{t=0}^\infty=\{(\bd{m}^{(t)},\bd{v}^{(t)},\bd{\gamma}^{(t)})\}_{t=0}^\infty$ be the sequence of iterates generated by Algorithm~\ref{alg1}. Let the solution set $\Gamma$ be the set of points $(\bd{m},\bd{v},\bd{\gamma})$ which satisfy the KKT conditions in \eqref{KKT_G1}--\eqref{KKT_G3}. Assume there exists $\bd{z}^{(1)}$ such that $\mathcal{F}^{(1)}=\mathcal{F}(\bd{z}^{(1)})$ is finite. Then \\ 
(a) The iterates are contained in a compact set. \\
(b) For $\bd{z}^{(t)}\in \Gamma ,  \mathcal{F}^{(t)}\geq \mathcal{F}^{(t+1)}$. \\
(c) For $\bd{z}^{(t)}\notin \Gamma , \mathcal{F}^{(t)}>\mathcal{F}^{(t+1)}$. \\
(d) The point-to-set mapping defined by Algorithm~\ref{alg1} is closed. \\
(e) All limit points of the iterates are in the solution set $\Gamma$ and $\mathcal{F}^{(t)}\rightarrow \mathcal{F}(\bd{m},\bd{v},\bd{\gamma})$ as $t\rightarrow \infty$ for some $(\bd{m},\bd{v},\bd{\gamma}) \in \Gamma$. 
\end{theorem}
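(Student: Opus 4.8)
The plan is to deduce parts (a)--(e) from Zangwill's global convergence theorem \cite{Luenberger}, whose hypotheses --- a compact set containing all iterates, a continuous descent function that strictly decreases off the solution set and is nonincreasing on it, and a closed algorithmic map --- are precisely what (a)--(d) assert, with (e) being (modulo a short argument) the conclusion of that theorem. For (a), note that Theorem~\ref{theorem_dec} gives $\mathcal{F}^{(t+1)}\le\mathcal{F}^{(t)}$ and $\mathcal{F}^{(1)}$ is finite by hypothesis, so every iterate lies in the sublevel set $S=\{(\bd{m},\bd{v},\bd{\gamma}):\bd{m}\succeq 0,\ \mathcal{F}(\bd{m},\bd{v},\bd{\gamma})\le\mathcal{F}^{(1)}\}$; it remains to show $S$ is compact. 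Closedness follows from lower semicontinuity of the extended-value function in \eqref{F}--\eqref{F_III} together with the fact that $S$ stays away from $\{v_j=0\}$ and $\{\gamma_k=0\}$, where $\mathcal{F}=+\infty$. Boundedness is a coercivity bookkeeping exercise on \eqref{F_I}--\eqref{F_III}: the $-\tfrac12\sum_j\log v_j$ term controls $v_j\to 0^+$, the variance-type exponential in $\mathcal{F}_1$ (together with the $\gamma_k^{-1}\psi_{kj}^2 v_j$ terms) controls $v_j\to\infty$, the $\tfrac12\sum_k\log\gamma_k$ term controls $\gamma_k\to\infty$ while the $\gamma_k^{-1}(\cdot)^2$ terms in $\mathcal{F}_2,\mathcal{F}_3$ control $\gamma_k\to 0$, and, since $\bd{\Psi}$ is invertible, $\|\bd{m}\|\to\infty$ forces some $|(\bd{\Psi}\bd{m})_k|\to\infty$, which drives $\mathcal{F}_2+\mathcal{F}_3\to\infty$.

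Parts (b) and (c) are read off from Theorem~\ref{theorem_dec} and Corollary~\ref{col}. The bound \eqref{B_diff} is a sum of nonnegative terms, so $\mathcal{F}^{(t)}\ge\mathcal{F}^{(t+1)}$ unconditionally, which is (b). For (c), suppose $\mathcal{F}^{(t)}=\mathcal{F}^{(t+1)}$; then the right-hand side of \eqref{B_diff} vanishes, hence each summand vanishes, so $\bd{\beta}^{(t)}=\bd{b}^{(t)}$ and $\tilde{\bd{\beta}}^{(t)}=\tilde{\bd{b}}^{(t)}$ (an I-divergence is zero only when its arguments coincide) and $\bd{\gamma}^{(t)}=\bd{\gamma}^{(t+1)}$ (the Itakura--Saito divergence is zero only when its arguments coincide). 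By \eqref{m_update} and \eqref{v_update} this gives $\bd{m}^{(t+1)}=\bd{m}^{(t)}$ and $\bd{v}^{(t+1)}=\bd{v}^{(t)}$, so $\bd{z}^{(t)}$ is a fixed point of the iteration and therefore lies in $\Gamma$ by Corollary~\ref{col}. Contrapositively, $\bd{z}^{(t)}\notin\Gamma$ forces strict decrease.

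For (d), the map of Algorithm~\ref{alg1} is the composition of three maps: the map $\bd{z}^{(t)}\mapsto(\bd{b}^{(t)},\bd{f}^{(t)},\bd{g}^{(t)},\tilde{\bd{b}}^{(t)},\bd{\xi}^{(t)})$, which is continuous (it is built from forward/backprojections, the sparse multiplications by $\bd{\Psi}$, and the exponential \eqref{mu}); the $p$ decoupled one-dimensional minimizations \eqref{min_S_m}--\eqref{min_S_v}; and the continuous F-step map \eqref{sol_forward}. Each one-dimensional subproblem has a strictly convex and coercive objective on its feasible ray --- the $b^{(t)}_j/Z_1$ exponential plus the $g^{(t)}_j$ quadratic in \eqref{surogate1}, and the $-\tfrac12\log v_j$ plus the $\tilde b^{(t)}_j/Z_1$ exponential in \eqref{surogate2}, where one invokes the standing assumption that every pixel/voxel is intersected by at least one ray so that $b^{(t)}_j,\tilde b^{(t)}_j>0$ --- hence a unique minimizer that depends continuously on the parameters, and the thresholding $[\,\cdot\,]_+$ in \eqref{m_update1} is continuous. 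Thus each subproblem defines a continuous (hence closed) point-to-point map, and a composition of continuous maps on the compact set from (a) is continuous, hence closed.

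Finally, (e): parts (a)--(d) verify the hypotheses of Zangwill's theorem on the complement of $\Gamma$, so every limit point of $\{\bd{z}^{(t)}\}$ lies in $\Gamma$; moreover $\mathcal{F}^{(t)}$ is nonincreasing and bounded below ($\mathcal{F}$ is continuous on the compact set $S$ and attains its minimum there), so $\mathcal{F}^{(t)}$ converges to some $\mathcal{F}^{\ast}$, and along any convergent subsequence $\bd{z}^{(t_k)}\to\bd{z}^{\ast}\in\Gamma$ continuity of $\mathcal{F}$ on $S$ gives $\mathcal{F}^{\ast}=\mathcal{F}(\bd{z}^{\ast})$, completing the proof. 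I expect the main obstacle to be part (d): making precise the \emph{uniform} continuous dependence of the one-dimensional minimizers on $\bd{z}^{(t)}$ over the compact set $S$, and isolating the mild data assumptions ($b^{(t)}_j,\tilde b^{(t)}_j>0$) needed for those minimizers to be unique; the coercivity accounting in (a) is the other place where care is required.
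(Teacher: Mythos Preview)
Your proposal is correct and follows essentially the same architecture as the paper: verify the hypotheses of Zangwill's global convergence theorem via (a)--(d), then read off (e). Your treatments of (a), (b), (c), and (e) match the paper's almost step for step --- in fact your coercivity bookkeeping in (a) is more thorough than the paper's, which only sketches the $\|\bd{z}\|\to\infty$ direction and leaves the boundary cases $v_j\to 0^+$, $\gamma_k\to 0^+$ implicit.

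The one genuine difference is in (d). You argue that each 1D surrogate is strictly convex and coercive (under the mild standing assumption $b^{(t)}_j,\tilde b^{(t)}_j>0$), hence has a \emph{unique} minimizer depending continuously on the data, so the algorithmic map is a continuous point-to-point map and therefore closed. The paper instead invokes Proposition~7 of Gunawardana and Byrne \cite{gunawardana}, which asserts that $a\mapsto\argmin_b f(a,b)$ is a closed point-to-set map whenever $f$ is continuous and the argmin is nonempty --- no uniqueness required. The paper's route is slightly more robust (it does not need the positivity assumption on the backprojections), while yours is more elementary and self-contained. Either is acceptable; your own caveat about isolating the data assumptions needed for uniqueness is exactly the right place to be careful.
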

\begin{proof}
(a) By assumption, there is an initial guess $\bd{z}^{(1)}$ such that $\mathcal{F}^{(1)}=\mathcal{F}(\bd{z}^{(1)})$ is finite. From  the $\mathcal{F}$-monotonicity of the sequence $\{\bd{z}^{(t)}\}_{t=1}^\infty$, as asserted by theorem~\ref{theorem_dec}, and the positivity constraints incorporated into the algorithm, the iterates $\{\bd{z}^{(k)}\}_{k=1}^\infty$ are contained in a sub-level set of $\mathcal{F}$ given by $\{\bd{z} \, :  \bd{z} \succeq 0, \mathcal{F}(\bd{z})\leq \mathcal{F}^{(1)} \}$. To show that this set is compact we only need to show that  $\lim_{\|\bd{z}\|_2\rightarrow\infty} \mathcal{F}(\bd{z}) =\infty$. 
Using \eqref{F}--\eqref{F_III}, it can be verified that this condition is satisfied for any combination of $m_j\rightarrow \infty$, $v_j\rightarrow \infty$, $\gamma_k\rightarrow \infty$. \\
(b) Follows from \eqref{B_diff}. \\
(c) Equivalently, we show that if $\mathcal{F}^{(t)}\leq \mathcal{F}^{(t+1)}$ then $\bd{z}^{(t)}\in \Gamma$.  Assume that $\mathcal{F}^{(t)}\leq \mathcal{F}^{(t+1)}$, then from \eqref{B_diff} we must have $\mathcal{F}^{(t)}= \mathcal{F}^{(t+1)}$ and all I divergences in \eqref{B_diff} must equal zero (as they are nonnegative). This implies that  $\beta^{(t)}_j\rightarrow b_j^{(t)}$ and $\tilde{\beta}_j^{(t)}\rightarrow \tilde{b}_j^{(t)}$, $\forall j$. From \eqref{m_update} and \eqref{v_update} it follows that  $m^{(t+1)}_j\rightarrow m_j^{(t)}$, and $v_j^{(t+1)} \rightarrow v^{(t)}_j$ so $\bd{z}^{(t)}$ is a fixed point. It then follows from corollary~\ref{col} that $\bd{z}^{(t)}$ satisfies the KKT conditions, i.e., $\bd{z}^{(t)} \in \Gamma$   \\
(d) We use a result by Gunawardana and Byrne \cite{gunawardana} which is restated here for convenience. 
\begin{proposition}[Proposition~7 in \cite{gunawardana}]
\label{prop_guna}
Given a real-valued continuous function $f$ on $A\times B$, define the point-to-set map $F : A \rightarrow B$ by 
\vspace{-0.25ex}
\begin{align}
F(a) = \argmin_{b\in B} f(a,b)= \{b: f(a,b)\leq f(a,b'), \,\, \forall b'\in B \}.
\end{align}
Then, the point-to-set mapping  $a\rightarrow F(a)$ is closed at $a=a'$ if $F(a')$ is nonempty.
\end{proposition}
\\ 
The surrogate functions in \eqref{surogate1}--\eqref{surogate2} are continuous and convex, and they also satisfy $\mathcal{S}^m_j\ge 0$ and $\mathcal{S}^v_j\geq 0$ for any $m_j, v_j \geq 0$, thus insuring the existence of the solutions to \eqref{min_S_m}--\eqref{min_S_v}. From Proposition~\ref{prop_guna} it follows that the mapping $(\bd{m}^{(t)},\bd{v}^{(t)}) \rightarrow (\bd{m}^{(t+1)},\bd{v}^{(t+1)})$ defined by \eqref{min_S_m}--\eqref{min_S_v} (lines 11-12 of Algorithm~1) is a \emph{closed} point-to-set mapping. Performing several iterations of \eqref{min_S_m}--\eqref{min_S_v} also constitutes a closed mapping, since it is a composition of closed mappings. The mapping $\bd{\gamma}^{(t)}\rightarrow \bd{\gamma}^{(t+1)}$ defined by \eqref{sol_forward} (line 15 of Algorithm~1) is a continuous and therefore closed point-to-point mapping. Finally, the composition of both closed mappings $(\bd{m}^{(t)},\bd{v}^{(t)},\bd{\gamma}^{(t)}) \rightarrow (\bd{m}^{(t+1)},\bd{v}^{(t+1)},\bd{\gamma}^{(t+1)})$ is also closed.    
\\
(e) Follows from (a)--(d) and Zangwill's generalized convergence theorem \cite{Zangwill}.
\end{proof}
%\\
%REMARK:  A fixed point of an AM algorithm satisfying the KKT conditions can only be local minimum or a saddle point with the latter easily avoided by any minimal perturbation. 

\section{Different Perspectives on VARD and Connections to Prior Art} \label{connections}
\subsection{A Different View of VARD} \label{new_view}
We present an alternative view of the proposed VARD framework, which will provide further insight into the sparsity-promoting mechanism and will later enable extensions of the AM algorithm. 
The minimum of the objective function in \eqref{F_I}-\eqref{F_III} with respect to $\bd{\gamma}$ is given by 
\begin{align}
\gamma_k^*=(\sum_j\psi_{kj}m_j)^2+\sum_j\psi^2_{kj}v_j. \label{gamma_opt}
\end{align}
Substituting \eqref{gamma_opt} into \eqref{F_I}-\eqref{F_III} we obtain an alternative formulation for VARD
\begin{align}
&\min_{\bd{m}\succeq 0 ,\bd{v} \succeq 0} \E_{q(\bd{x}|\bd{m},\bd{v})}\big [-\log p(\bd{y}|\bd{x})\big] +\frac{1}{2}\sum_k \log(\mu^2_k+\sigma^2_k)-h[q(\bd{x};\bd{m},\bd{v})], \label{backward2}  \\
&\mu_k=\sum_j\psi_{kj}m_j, \qquad \sigma^2_k =\sum_j\psi^2_{kj}v_j, \qquad h[q(\bd{x};\bd{m},\bd{v})]=\frac{1}{2}\sum_j \log v_j, \label{tilde_mv}
\end{align}
where the explicit expression for $\E_{q(\bd{x}|\bd{m},\bd{v})}\big [-\log p(\bd{y}|\bd{x})\big]$ is given by \eqref{F_I}, and $h[q(\bd{x};\bd{m},\bd{v})]$ in \eqref{tilde_mv} denotes the differential entropy of the distribution $q$. 
$\bd{\mu}$ and $\bd{\sigma}$ in \eqref{tilde_mv} are the posterior mean and standard deviation (std) in the transform domain $\bd{s}=\bd{\Psi}\bd{m}$, respectively.

Note in \eqref{backward2} that the term $\sum_k\log(\mu^2_k+\sigma^2_k)$ promotes sparsity of both the posterior mean and standard deviation in the transform domain. In fact, without the entropy term $h[q]=\sum_j \log v_j/2$ in \eqref{backward2}, any $(\bd{\mu},\bd{\sigma})$ with at least one pair of components $(\mu_k,\sigma_k)=(0,0)$ would be a global minimum of \eqref{backward2}, with $2^p$ possible combinations. The entropy term serves as a barrier preventing solutions with $v_j=0$, thus avoiding these unwanted minima. Note that the concave term $\sum_k\log(\mu^2_k+\sigma^2_k)$ in \eqref{backward2} is not separable with respect to the components of $\bd{m}$ and $\bd{v}$, and the convex decomposition approach used in Sec.~\ref{fast_VARD} is not applicable here.  
%Accordingly, we hypothesize that the VARD algorithm in \eqref{backward}--\eqref{forward} or its scalable modification PAR-VARD in Sec.~\ref{fast_VARD} are preferable over solving \eqref{backward2} directly.
%since the former are guaranteed to converge to a local minimum (saddle points are avoided by minimal perturbation), whereas \eqref{backward2} can have local maxima.
%It is also interesting to point out that the effective sparsity-promoting penalty $\log(\mu^2_k+\sigma^2_k)$ in \eqref{backward2} is scale invariant, as opposed to standard penalties used for Type-I estimation like the $\ell_1$, TV and Huber-class penalties. The scaling of the Type-I penalties is determined by the tuning parameters which control the tradeoff between data-fit and the prior. 

\subsection{Connections to Reweighted $\ell_2$ Algorithms} \label{connections_L2}
The ARD approach is related to the reweighted $\ell_2$ algorithm \cite{chartrand2008iteratively} and the comparison between the two for Gaussian likelihoods has been presented in \cite{wipf2010iterative}.  
It is natural to consider the reweighted $\ell_2$ algorithm for the Poisson likelihood as well, which consists of the following two steps
\begin{align}
&\bd{x}^{(t+1)}=\argmin_{\bd{x}\succeq 0}  -\log p(\bd{y}|\bd{x}) +\frac{1}{2} \sum_k (\bd{\Psi}\bd{x})^2_k/\gamma^{(t)}_k, \label{re_ell2_step1} \\
& \gamma^{(t+1)}_k=(\bd{\Psi}\bd{x}^{(t+1)})^2_k+\epsilon, \label{re_ell2_step2}
\end{align} 
where $\epsilon > 0$ is a fixed parameter that needs to be tuned. Note that we cannot choose $\epsilon=0$ since some entries in $\bd{\Psi}\bd{x}$ are zero and this will result in infinite weights for the penalty in \eqref{re_ell2_step1}.
For an easy comparison of \eqref{re_ell2_step1}--\eqref{re_ell2_step2} to the VARD algorithm in \eqref{backward}--\eqref{forward}, we provide here again the update equations for VARD with the posterior variance fixed 
\begin{align}
&\bd{m}^{(t+1)}=\min_{\bd{m}\succeq 0} \E_{q(\bd{x}|\bd{m},\bd{v})}\big [-\log p(\bd{y}|\bd{x})\big]+\frac{1}{2}\sum_k (\bd{\Psi}\bd{m})^2_k/\gamma^{(t)}_k, \label{VARD_re_ell2_step1} \\
& \gamma^{(t+1)}_k=(\bd{\Psi}\bd{m}^{(t+1)})^2_k+(\bd{\Psi}\text{Diag}[\bd{v}^{(t+1)}]\bd{\Psi}^T)_k. \label{VARD_re_ell2_step2}
\end{align} 
It can be seen that in \eqref{re_ell2_step1} the data-fidelity term is the negated log-likelihood, whereas for VARD in \eqref{VARD_re_ell2_step1} it is the expectation of this term with respect to the approximate posterior $q$. A key difference between the two algorithms is that in  \eqref{re_ell2_step2} a tuning parameter $\epsilon$ is required, whereas in \eqref{VARD_re_ell2_step2} the variances serve as ``tuning parameters'' with one parameter for each pixel/voxel, and are automatically learned from the data during the B-step.  

Next, we make another comparison between the two approaches. We rewrite the reweighted $\ell_2$ algorithm in \eqref{re_ell2_step1}--\eqref{re_ell2_step2} as 
$\bd{x}^{(t+1)}=\min_{\bd{x}\succeq 0} Q(\bd{x},\bd{\gamma}^{(t)})$ and $\bd{\gamma}^{(t+1)}=\min_{\bd{\gamma}} Q(\bd{x}^{(t+1)},\bd{\gamma})$,  where $Q$ is defined as 
\begin{align}
& Q(\bd{x},\bd{\gamma})=-\log p(\bd{y}|\bd{x})+\frac{1}{2}\sum_k [(\bd{\Psi}\bd{x})^2_k+\epsilon]/\gamma_k+\frac{1}{2}\sum_k\log \gamma_k. \label{re_ell2_obj1} 
\end{align}
Minimizing \eqref{re_ell2_obj1} with respect to $\bd{\gamma}$ by solving $\nabla_{\bd{\gamma}}Q=0$ results in \eqref{re_ell2_step2} with the superscript $(t+1)$ removed. Substituting this solution into \eqref{re_ell2_obj1}, we obtain the equivalent MAP estimation problem
\begin{align}
\min_{\bd{x}\succeq 0}-\log p(\bd{y}|\bd{x})+\frac{1}{2}\sum_k\log[(\bd{\Psi}\bd{x})^2_k+\epsilon]. \label{re_ell2_obj}
\end{align}
The result in \eqref{re_ell2_obj} can be compared to the alternative formulation for VARD given in \eqref{backward2}--\eqref{tilde_mv}.
Note that small values of $\epsilon$ in \eqref{re_ell2_obj} introduce many local minima, since any solution with at least one component $(\bd{\Psi}\bd{x})_k=0$ will result in a large negative penalty. In this case, any standard gradient-based algorithm will be easily trapped at a sub-optimal solution. On the other hand, large values of $\epsilon$ avoid local minima but do not promote sparse solutions. The correct choice for the parameter $\epsilon$ is therefore critical. 
%The algorithm in \eqref{re_ell2_step1}--\eqref{re_ell2_step2} uses a single $\epsilon$ tuning parameter mainly because it is determined by the user and having additional parameters would complicate considerably the tuning process. In contrast, VARD uses many different parameters, which are the variances $\bd{v}$, with a separate parameter $v_k$ for each component $s_k=(\bd{\Psi}\bd{x})_k$ (compare \eqref{re_ell2_obj} to \eqref{backward2}). Importantly, in VARD, the variances are learned automatically from the data, so tuning is not needed.     

To facilitate the comparison between reweighted $\ell_2$ and  PAR-VARD, we propose to modify the former using the separable surrogate approach as in Sec.~\ref{fast_VARD}, and derive the corresponding surrogate function.  
The surrogate function for the log-likelihood is given by 
\begin{align}
-\log p(\bd{y}|\bd{x})\leq \sum_j\mathcal{S}^{mle}_j(x_j;x^{(t)}_j) \triangleq \sum_j\bigg\{b^y_j x_j+\frac{b^{(t)}_j}{Z_1} \exp{\big[-Z_{mle}(x_j-x^{(t)}_j)\big]} \bigg\}, \label{S_mle}
\end{align}
where $b^{(t)}_j$ and $b^y_j$ are computed according to \eqref{b} and \eqref{mu} with $\tilde{p}^{(t)}_j=0$ plugged in, and 
\begin{align}
Z_{mle}=\max_i \sum_j \phi_{ij}. \label{Z_mle}
\end{align}
$\mathcal{S}^{mle}_j(x_j)$  in \eqref{S_mle} was derived in \cite{OSullivan} and can be obtained as a special case of the PAR-VARD surrogate in \eqref{surogate1} when $\bd{m}$ is replaced by $\bd{x}$, $\bd{v}\rightarrow 0$, and $\bd{\gamma}\rightarrow \infty$, with consequently $f_j^{(t)},g_j^{(t)}\rightarrow 0$. Adding the surrogate function for the quadratic penalty in \eqref{re_ell2_step1}, we obtain the update equation
\begin{align}
&x^{(t+1)}_j=\arg\min_{x \geq 0} \,\,\mathcal{S}_j^x(x;x_j^{(t)}), \label{SL2_update}\\
&\mathcal{S}_j^x(x;x_j^{(t)})=\mathcal{S}_j^{mle}(x;x_j^{(t)})+ f^{(t)}_j\,(x-x^{(t)}_j)+\,g^{(t)}_j\,(x-x^{(t)}_j)^2, \label{S_L2}
\end{align}
where $f^{(t)}_j, g^{(t)}_j$ are defined in \eqref{d_f}--\eqref{g_h} with $\gamma_j^{(t)}$ given by \eqref{re_ell2_step2} and $m_j^{(t)}$ replaced by $x_j^{(t)}$. The reweighted $\ell_2$ algorithm is then described in Algorithm~2. To the best of our knowledge, this algorithm has not been previously reported in the literature and can be considered as another contribution of this paper. 
\begin{algorithm}[t]
% \floatname{algorithm}{}
\caption{: Reweighted $\ell_2$ with Separable Surrogates} \label{alg2}
\begin{algorithmic}[1]
\State Initialize $\bd{x}^{(1)}, \bd{\gamma}^{(1)}$
\State Compute $\bd{b}^y$ in \eqref{b} \hfill\% backprojection of data \%
\For {t=1 to N} \hfill \% AM iterations \%
\State $\bd{p}^{(t)}\leftarrow\bd{\Phi}\bd{x}^{(t)}$  \hfill \% projections \%
\State $\bd{\mu}^{(t)}\leftarrow \bd{\eta}\odot\exp (-\bd{p}^{(t)})$  
\State Compute $\bd{g}^{(t)}$,  and $\bd{f}^{(t)}$  \hfill \% defined in \eqref{d_f}--\eqref{g_h} with $m^{(t)}$ replaced by $x^{(t)}$ \%
\State $\bd{b}^{(t)}\leftarrow\bd{\Phi}^T\bd{\mu}^{(t)}$  \hfill \% backprojections of $\bd{\mu}^{(t)}$ \%  
\For {j=1 to p} \hfill \% executed in parallel \%
\State $x_j^{(t+1)}\leftarrow \argmin_{x\geq 0} \mathcal{S}^x(x;x^{(t)}_j,b^{(t)}_j,f_j^{(t)},g_j^{(t)})$ \hfill \% see \eqref{SL2_update}--\eqref{S_L2} \% 
\EndFor
\State $\bd{\gamma}^{(t+1)} \leftarrow[\bd{\Psi}\bd{x}^{(t)}]\odot [\bd{\Psi}\bd{x}^{(t)}]+\bd{1}\epsilon$ \hfill \% $\bd{1}$ is the vector of all ones \% 
\EndFor
\end{algorithmic}
\end{algorithm}

\subsection{Comparison to Maximum Likelihood Estimation (MLE) and Maximum a Posteriori (MAP) Estimation} \label{MLE}
Next, we compare VARD to the AM algorithm for maximum likelihood estimation (MLE) in \cite{OSullivan} and the AM algorithm for penalized maximum likelihood estimation (MAP) in \cite{Keesing,Soysal}. The model  in \cite{OSullivan} is beyond the scope of our work, but it can be reduced to the special case of a monoenergetic model with no background events given in \eqref{poisson} (See also Sec.~\ref{limits} for a more detailed discussion). Although the derivations in this section can be found in \cite{OSullivan,Keesing,Soysal} we present them here to allow for an easier comparison to the proposed VARD algorithm.

The surrogate function for MLE is given by \eqref{S_mle}. The minimizer of this surrogate is given in closed form, leading to the update equation
\begin{align}
x^{(t+1)}_j=\bigg [x^{(t)}_j+\frac{1}{Z_{mle}}\log\big(\frac{b^{(t)}_j}{b^y_j} \big)\bigg ]_+, \label{MLE_update}
\end{align}
with $b^{(t)}_j$ and $b^y_j$ computed according to \eqref{b} and \eqref{mu} with $\tilde{p}^{(t)}_j=0$ plugged in, and $Z_{mle}$ is defined in \eqref{Z_mle}. 
The resulting algorithm is described in Algorithm~3. 
\begin{algorithm}[t] 
\caption{: {\color{blue} MLE}/ MAP with Separable Surrogates} \label{alg3}
\begin{algorithmic}[1]
\State Initialize $\bd{x}^{(1)}$
\State Compute $\bd{b}^y$ in \eqref{b} \hfill \% backprojection of data \%    
\For {t=1 to L} \hfill \% convex decomposition iterations \%
\State $\bd{p}^{(t)}\leftarrow\bd{\Phi}\bd{x}^{(t)}$  \hfill \% projections \%
\State $\bd{\mu}^{(t)}\leftarrow \bd{\eta}\odot\exp (-\bd{p}^{(t)})$  \hfill \% predicted mean for measurements  \%
\State $\bd{b}^{(t)}\leftarrow\bd{\Phi}^T\bd{\mu}^{(t)}$  \hfill\% backprojections of $\bd{\mu}^{(t)}$ \%
\For {j=1 to p} \hfill \% executed in parallel \%
\State {\color{blue} $x^{(t+1)}_j=\big [x^{(t)}_j+\log(b^{(t)}_j/b^y_j)/Z_{mle}\big ]_+$     \hfill \% only for MLE \%}
\State  or $x_j^{(t+1)}\leftarrow \argmin_{x\geq 0} \mathcal{S}^{map}(x;x^{(t)}_j,b^{(t)}_j)$ \hfill \% only for MAP (see \eqref{S_MAP}) \% 
\EndFor
\EndFor
\end{algorithmic}
\end{algorithm}

Using the same approach, it is also possible to add a penalty to the log-likelihood and obtain an AM algorithm for MAP estimation \cite{Keesing,Soysal}. In the following comparisons, we shall use the smooth edge preserving neighborhood penalty given by \eqref{Gibbs}--\eqref{Huber}. The surrogate function for this penalty involving the $j$th pixel is given by
\begin{align}
\mathcal{S}_j^{pen}(x;x_j^{(t)})=\beta\delta^2\sum_{k:\,\{j,k\}\in\Theta}\frac{1}{N_j}\bigg [\big|x^{(t)}_j-x^{(t)}_k+N_j(x-x^{(t)}_j)\big|/\delta-\log\big(\big|x^{(t)}_j-x^{(t)}_k+N_j(x-x^{(t)}_j)\big|/\delta\big)\bigg ],
\label{S_Huber}
\end{align}
where $N_j$ is the number of neighbors of the $j$th pixel.   
The updates for the MAP estimate are then given by 
\begin{align}
&x^{(t+1)}_j=\arg\min_{x \geq 0} \,\,\mathcal{S}_j^{map}(x;x_j^{(t)}), \label{MAP_update} \\
&\mathcal{S}_j^{map}(x;x_j^{(t)})=\mathcal{S}_j^{mle}(x;x_j^{(t)})+\mathcal{S}_j^{pen}(x;x_j^{(t)}), \label{S_MAP}
\end{align}
where $\mathcal{S}_j^{mle}$ and $\mathcal{S}_j^{pen}$ are given by \eqref{S_mle} and \eqref{S_Huber}, respectively. See Algorithm~\ref{alg3}.

\subsection{Connections to Previous Variational Inference Methods}
The approximation of the posterior using a Gaussian distribution has been studied by Challis and Barber \cite{challis2011concave}. Although they did not consider  the Poisson likelihood in \eqref{poisson} specifically, their approach is the same as our B-step in \eqref{backward} if the posterior is chosen to be factorized and if $\bd{\gamma}$ are \emph{fixed} and \emph{known}. We emphasize that this is very different from the AM algorithm proposed here, which alternates between the B-step and the F-step where $\bd{\gamma}$ are also updated. In fact, a single B-step does not result in sparse solutions. In order to obtain sparse solutions, the B and F steps need to be repeated. This is similar in principle to the original EM-ARD algorithm \cite{tipping2001sparse}, where a single E-step would not result in sparse solutions. Also, the parallel algorithm we propose in Sec.~\ref{fast_VARD} is completely new. An alternative approach to \cite{challis2011concave} was proposed by Khan et al.\cite{Khan} but it requires the inversion of a $p\times p$ matrix ($p$ is the number of pixels/voxels) so it is not feasible for large scale problems. 
We note that the method by Seeger and Nickisch \cite{seeger2011large}  cannot be applied here for approximating the posterior since it is restricted to super-Gaussian likelihoods and the Poisson likelihood we consider (see \eqref{poisson}) is not super-Gaussian. 

\section{Generalization of VARD to Improper Priors and Overcomplete Representations} \label{gen_AM}
Building on Sec.~\ref{new_view}, we extend VARD to cases when the prior is improper or when one wishes to use a non-square $\bd{\Psi}$ matrix in \eqref{prior}. Recall that when deriving the AM algorithm in Sec.~\ref{Pres_AM_VARD}, we assumed a proper prior (the matrix $\bd{\Psi}^T\bd{\Gamma}^{-1}\bd{\Psi}$ is non-singular) and that  $\bd{\Psi}$ is a non-singular square matrix (see footnote~5). These assumptions are violated when the Dirichlet boundary conditions in Sec.~\ref{VARD_prior} cannot be used.  For example, in cone beam CT, the object extends beyond the region of interest along one spatial direction, and the Dirichlet boundary conditions are violated in that direction.
It is well known that without boundary conditions, the neighborhood penalties of the form in \eqref{Gibbs} correspond to a singular $\bd{\Psi}$ matrix in \eqref{prior} \cite{Banerjee}, leading to an improper prior. 
The case when $\bd{\Psi}$ is non-square arises when one wishes to use an overcomplete sparse representation, which is shown in Sec.~\ref{results} to yield better image quality. 
Even if $\bd{\Psi}$ is chosen such that the prior is proper, introducing a non-square $\bd{\Psi}$ in the AM algorithm of \eqref{backward}--\eqref{forward} results in a considerable complication of the $\bd{\gamma}$ update (F-step). The objective includes $\log|(\bd{\Psi}^T\bd{\Gamma}^{-1}\bd{\Psi})^{-1}|$ which can no longer be reduced to $\sum_k\log\gamma_k$ ($\bd{\Gamma}=\text{Diag}(\bd{\gamma})$) if $\bd{\Psi}$ is non-square. Taking the gradient of this expression with respect to $\bd{\gamma}$ yields $ \bd{\gamma}^{-2}\odot\text{Diag}[\bd{\Psi}(\bd{\Psi}^T\bd{\Gamma}^{-1}\bd{\Psi})^{-1}\bd{\Psi}^T]$. Both the objective and the gradient have a computational complexity of $O(p^3)$ ($p$ =number of pixels/voxels) which is prohibitive for large-scale problems. 

To avoid the above complications, we use a different approach, where we start with the objective function in \eqref{backward2}--\eqref{tilde_mv}, regardless of how it was derived, where $\bd{\Psi}$ can be square but singular or non-square. This can be justified by the insights gained from Secs.~\ref{new_view}--\ref{connections_L2}. 
The objective in \eqref{backward2}--\eqref{tilde_mv} can be minimized by using an AM algorithm for minimizing the objective function in \eqref{F}--\eqref{F_III}. The latter objective can no longer be interpreted as the free variational energy (FVE) in \eqref{FVE}, even when the prior is proper, since we replaced  $\log|(\bd{\Psi}^T\bd{\Gamma}^{-1}\bd{\Psi})^{-1}|$ with  $\sum_k\log\gamma_k$. Since the replaced term is independent of $\bd{m}$ and $\bd{v}$, minimizing the objective in \eqref{F}--\eqref{F_III} with respect to $(\bd{m},\bd{v})$  during the B-step of the AM algorithm is still equivalent to minimizing the KL divergence between the proposed posterior $q$ and the true posterior\footnote{Note that the posterior distribution defined by \eqref{Bayes} can be well-defined even though the prior is improper.}. 
The F-step, on the other hand, can no longer be interpreted as minimizing the KL divergence between $q$ and the prior, as in \eqref{forward}. Nevertheless, the promotion of sparsity is still asserted by the equivalent formulation in \eqref{backward2}--\eqref{tilde_mv}, where the  $\sum_k\log(\mu^2_k+\sigma^2_k)$ term promotes sparsity in the transform domain, and it is also supported by numerical experiments in Sec.~\ref{results}.
It is important to note that although the interpretation of the objective function during the F-step is different, it is still the same objective as before, given by \eqref{F}--\eqref{F_III}, so the \emph{convergence analysis in Sec.~\ref{convergence} still applies}.
   
To illustrate the type of priors that the above extension allows, we give an example for 2D-images where $\bd{\Psi}$ is a difference matrix given by
\vspace{-2ex}
\begin{align}
\bd{\Psi}=[\bd{\Psi}^h; \bd{\Psi}^v], \qquad &(\bd{\Psi}^{h,v})_{jk}=\bigg\{
\begin{array}{lll}
&1    &\text{if} \quad j=k \\
&-1 &\text{if} \quad  \{j,k\}\in\Theta^{h,v} \\ 
&0    &\text{otherwise}
\end{array}, 
\label{psi_diff1}
\end{align}
where $[\, ; \,]$ denotes row-wise concatenation, and
$\Theta^h$, $\Theta^v$ define pairs of neighboring pixels located along the same horizontal or vertical lines in the image, with corresponding matrices $\bd{\Psi}^h, \bd{\Psi}^v$, respectively. We assign the same weight to horizontal and vertical pixel-differences, so $\bd{\gamma}$ is replaced by $[\bd{\gamma};  \bd{\gamma}]$. It is also possible to  incorporate differences between additional neighbors by concatenating additional $\bd{\Psi}$ matrices for these additional neighbors in \eqref{psi_diff1}, and it is also easily extendible to 3D images. To understand the difference between the above construction and the original one in \eqref{psi_diff}, we write the sparsifying penalty from the objective function in \eqref{backward2} for both cases with only 2 neighbors per pixel for simplicity
\begin{align}
&\mathcal{P}_1=\sum_{j=1}^p \log \bigg\{\sqrt{\bigg (m_j-\frac{(m_{j_1}+m_{j_2})}{2}\bigg)^2+ v_j+\frac{1}{4}v_{j_1}+\frac{1}{4}v_{j_2}}\bigg\}, \label{P1} \\
&\mathcal{P}_2=\sum_{j=1}^p \log \bigg\{\sqrt{(m_j-m_{j_1})^2+ (v_j+v_{j_1}) + (m_j-m_{j_2})^2+(v_j+v_{j_2})} \bigg\} ,  \label{P2}
\end{align} 
where $\mathcal{P}_1$ corresponds to the original construction in \eqref{psi_diff} and $\mathcal{P}_2$ corresponds to the construction in \eqref{psi_diff1}. In both cases, $j_1$ and $j_2$ denote the indexes of the horizontal and vertical neighbors of the $j$th pixel, respectively. For $\mathcal{P}_1$ we have $\{j,j_1\}\in\Theta$ and $\{j,j_2\}\in\Theta$, so both neighbors are included in the same neighborhood defined by $\Theta$. For $\mathcal{P}_2$ we have  $\{j,j_1\}\in\Theta^h$ and $\{j,j_2\}\in\Theta^v$, using two different neighborhoods. In the limit of the variances going to zero, \eqref{P2} reduces to the logarithm of the isotropic total variation (TV) penalty, i.e., $\mathcal{P}_2$ promotes small differences between neighbors equally in each direction. In contrast, using the same limit in the argument of the logarithmic function in \eqref{P1} reduces it to differences between a pixel and the \emph{average} of its neighbors, so $\mathcal{P}_1$ promotes piecewise smoothness.  
%Note that in \eqref{psi_diff1} we did not use any boundary conditions.
%In both penalties, pixel/voxel differences in regions of low variances are penalized more than pixels/voxels in regions of high variances. 

\section{Model Limitations} \label{limits}
The algorithm presented in this paper is designed to solve a specific ARD problem. 
As with all models, the model assumed in Sec.~\ref{model} is only approximate, capturing some but not all of the underlying physics.
Considering a simplified physical model also facilitated our theoretical study which could potentially apply more broadly to applications other than the one we consider to motivate our work. 
Next, we discuss the limitations of the forward model presented in Sec.~\ref{model} when applying it to x-ray CT.

The actual data in X-ray transmission tomography are typically not Poisson distributed due to either system non-linearities or fundamental physical considerations. 
For example, the solid state detectors currently used by modern CT scanners are energy-integrating detectors, not photon-counting detectors.
The signal statistics for energy-integrating detectors are studied by Whiting et al. \cite{Whiting1,Whiting2}.
As mentioned in Sec.~\ref{sec_intro}, there is a growing interest in photon-counting detectors, which are already used in clinical studies \cite{Photon_Count}, and it is possible that this type of detectors will be integrated into commercial CT systems in the near future. 
Various researchers have developed algorithms for models that account for various physical phenomena such as the poly-energetic nature of the source \cite{OSullivan}, background events due to scatter \cite{Erdogan,OSullivan}, and electronic noise, which are beyond the scope of this work. A review on modelling the physics in iterative algorithms can be found in Nuyts et al. \cite{nuyts}.

Another simplification we have used is in the construction of the system matrix $\bd{\Phi}$ which was based on line length intersections. This approach does not take into account the detector pixel size or the focal spot size of the source. More realistic approaches for computing the system matrix can be found in the work by Basu and DeMan \cite{Basu,DeMan}. 

Finally, we would like to point to the limitations of the specific priors presented in Sec.~\ref{VARD_prior} and Sec.~\ref{gen_AM} which use the $\bd{\Psi}$ matrix defined in \eqref{psi_diff} and \eqref{psi_diff1}, respectively. 
As mentioned in Sec.~\ref{gen_AM}, in the case of cone-beam CT, the object often extends beyond the region of interest in one spatial direction, so the Dirichlet boundary conditions assumed implicitly in \eqref{psi_diff} and \eqref{psi_diff1} cannot be applied in this direction. In this case, the extension of VARD presented in Sec.~\ref{gen_AM} can be used to choose a different $\bd{\Psi}$ matrix which does not implicitly assume any boundary conditions.
%, e.g., $(\bd{\Psi})_{jj}=1$, for any $j$ that has at least 1 neighbor, $(\bd{\Psi})_{jk}=-1$ for any $j\neq k$ such that $\{j,k\}\in\Theta$, where $\Theta$ defines the neighborhood, and with all other entries in $\bd{\Psi}$ set to zero. 

\section{Alternative Algorithms} \label{alter}
In developing the parallel AM algorithm of Sec.~\ref{fast_VARD} we adopted a particular optimization transfer technique for the B-step based on separable surrogates that were derived via the convex decomposition lemma (see lemma~\ref{lemma}). There are several alternative approaches in the literature for CT reconstruction that could be used for the B-step as well. Two important examples are the separable paraboloidal surrogates (SPS) algorithm \cite{Erdogan} and the iterative coordinate descent (ICD) algorithm \cite{bouman1996,Yu}.  
Each different choice of an optimization method for the B-step would result in a different VARD algorithm, which could be compared to the corresponding MAP algorithm using the same optimization method. Here we consider only one possible choice and exploring the others is left for future research. 

%Any of these approaches, including the one we used in our paper, is a valid choice for the B-step and has its own advantages. For example, it is well known that ICD converges faster in high frequencies so if it is initialized with a solution that is close to the optimal one, it can converge extremely fast.  The optimization transfer approaches such as SPS and the approach we used here  are known to converge faster for low frequencies and can be initialized with a solution far away from the optimal solution. Optimization transfer approaches are known to converge slower for high frequencies. One solution to speed-up the convergence for high frequencies was presented by O'Sullivan \cite{}. 

%and achieve comparable speed and/or either based on use different types of surrogates \cite{} or on iterative coordinate descent. 

\section{Numerical Results}  \label{results}
In the following section we study the performance of the proposed VARD algorithm for x-ray CT. 
%For medical applications, there is a limit on the intensity of the x-ray source in order to avoid harmful radiation levels. 
We consider both simulated and real data for x-ray CT with source intensity levels ($\eta$ values in \eqref{poisson}) which are typical of clinical settings. The geometry considered in the experiments is shown in Fig.~\ref{fig_fan}.
\begin{figure}[h]
\centering
\includegraphics[width=7cm]{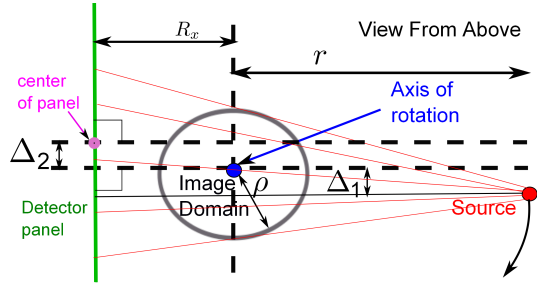}
\caption{Fan-beam geometries with flat detector panels used for all results. For simulated data we used $\rho=100\,mm$, $\Delta_1=0\,mm$, $\Delta_2=0.2\,mm$, $r=R_x=400\,mm$. For  experimental data $\rho=58\,mm$, $\Delta_1=1.2\,mm$, $\Delta_2=0\,mm$, $r=484.6\, mm$, $R_x=290.2\,mm$.}  
\label{fig_fan}
\end{figure}
%$r$ and $R_x$ are the shortest distances between the line going through the center of rotation in parallel to the detector panel and the source/detector panel, respectively. $\Delta_1$ is the distance between the line going through the center of rotation perpendicular to the detector panel and a parallel line going through the source. $Delta_2$ is the distance between the line perpendicular to the detector panel going through the center of the panel and the parallel line going through the center of rotation. 

We compare the following algorithms: (1) The proposed PAR-VARD algorithm described in Algorithm~\ref{alg1}; (2) Maximum likelihood estimator (MLE) from \cite{OSullivan} described in Algorithm~\ref{alg3}; (3) Maximum a Posteriori (MAP) estimator from \cite{Keesing,Soysal} described in Algorithm~\ref{alg3}; (4) The reweighted $\ell_2$ algorithm described in Algorithm~\ref{alg2}. For simplicity, we shall slightly abuse notation and refer to PAR-VARD as VARD. 
For reweighted $\ell_2$ and VARD, we use the prior in \eqref{prior} with two choices for $\bd{\Psi}$ given by \eqref{psi_diff} and \eqref{psi_diff1}, which we refer to as ``complete'' (C) and ``over-complete'' (O) representations, respectively. In \eqref{psi_diff} we choose $\Theta$ to correspond to a neighborhood of 1 horizontal and 1 vertical neighbors. For MAP, we use a sum of two penalties constructed by using \eqref{Gibbs}--\eqref{Huber}, one for the horizontal neighbor and one for the vertical neighbor with Dirichlet boundary conditions (consistent with \eqref{psi_diff} and \eqref{psi_diff1}).  
All algorithms were run for 2000 iterations to ensure convergence (a ``flat'' objective).  The images were initialized by setting all pixel values to zero. For VARD, all posterior variances $\bd{v}$ were initialized to $1$. For VARD and reweighted $\ell_2$,  the hyperparameters $\bd{\gamma}$ were initialized to $100$ to make the datafit term dominant in the first iteration.
All algorithms were implemented in MATLAB and ran on a desktop PC with 4 Intel Xeon E7 CPU's (8 cores each) running a Linux operating system.  
Sample code is available online \cite{website}. 

\subsection{Simulated Data} \label{sec_sim}
The simulated data were generated for a Shepp-Logan phantom with Poisson noise according to the model in \eqref{poisson} and source intensities $\eta=10^4,10^5$, which are typical of clinical systems. We use an image resolution of $256\times 256$, which for the geometry specified in Fig.~\ref{fig_fan} required $1372$ view-angles (full scan around the object) and $512$ detectors for each view ($702,464$ measurements in total). The required angular and detector resolutions were calculated according to classical sampling theory as detailed in \cite{natterer1986computerized}. 

For MAP we had to check different values for two tuning parameters $(\beta,\delta)$ in \eqref{Gibbs}--\eqref{Huber}, and perform many trials. For reweighted $\ell_2$ we had to check different values for $\epsilon$ in \eqref{re_ell2_step2} and run multiple trials as well.  The normalized root-mean-square errors (NRMSE) in the reconstructions obtained using these methods with different values of tuning parameters are shown in Tables~\ref{table_RMSE1}--\ref{table_RMSE2}, with source intensity  set to $\eta=10^5$ and $\eta=10^4$, respectively, and compared to VARD. 
The normalization in the NRMSE is done using the $\ell_2$ norm of the vectorized ground truth image.
One can see that VARD  with the over-complete representation (VARD-O) yields NRMSE that is comparable to the best result with the alternative methods but \emph{without} any tuning, and using only a \emph{single} reconstruction. 
By comparing Tables~1(a),(c) and Tables~2(a),(c), one can see that both reweighted $\ell_2$ and MAP are significantly more sensitive to the choice of tuning parameters when the noise level is higher. Both methods result in much higher NRMSE than VARD when the tuning parameters are not properly chosen.  It is important to note that in practice the object is unknown (NRMSE cannot be computed) so the tuning parameters are chosen \emph{by either examining the reconstructed image for each and every trial}, or using cross-validation, which are very time consuming. 
%The purpose of the comparison in Tables~1--2 is merely to show the sensitivity of the MAP and reweighted $\ell_2$ estimators to the particular choice of the tuning parameters. 
The lowest NRMSE is obtained using reweighted $\ell_2$ but the differences in the image compared to VARD are visually very minor, as shown below.
 
\begin{table}[h]
\centering
\subtable[MAP]{
\centering
\begin{tabular}{|c|c|c|c|c|}
\hline 
 & $\delta=10^{-5}$ & $\delta=10^{-4}$ & $\delta=10^{-3}$ & $\delta=10^{-2}$ \\ 
\hline 
$\beta=10^5$ & 1.79 \% & 1.52 \% & 0.65 \% & 2.10 \% \\ 
\hline 
$\beta=10^6$ & 1.51 \% & {\color{red} 0.54} \% & 1.16 \% & 9.26 \% \\ 
\hline 
$\beta=10^7$ & 0.55 \% & 0.96 \% & 7.22 \% & 40.01 \% \\ 
\hline 
$\beta=10^8$ & 1.61 \% & 11.5 \% & 43.69 \% & {\color{red} 64.59} \% \\ 
\hline 
\end{tabular}
}
\subtable[MLE/VARD]{
\centering
\begin{tabular}{|c|c|}
\hline 
MLE & 1.63  \% \\ 
\hline 
VARD-C & 0.85 \% \\ 
\hline 
{\color{red} VARD-O} & {\color{red} 0.68} \% \\ 
\hline
\end{tabular} 
}
\subtable[Reweighted $\ell2$]{
\centering
\begin{tabular}{|c|c|c|c|c|}
\hline 
& $\epsilon=10^{-8}$ &$\epsilon=10^{-6}$ & $\epsilon=10^{-4}$ & $\epsilon=10^{-2}$ \\ 
\hline 
Reweighted $\ell_2$-C &  11.03 \% &  0.66 \% & 1.32 \% & 1.81  \% \\ 
\hline 
Reweighted $\ell_2$-O & {\color{red} 2.88} \%  & {\color{red} 0.41} \%  & 0.99 \% &  1.79  \% \\ 
\hline 
\end{tabular}
}
\caption{Normalized root-mean-square errors (NRMSE) for reconstructions from synthetic data with Poisson noise using different methods and different values of tuning parameters. RMSE is normalized relative to the $\ell_2$ norm of the vectorized ground truth image. C=``Complete'' and O=``over-complete'' indicate the use of $\bd{\Psi}$ in \eqref{psi_diff} and \eqref{psi_diff1}, respectively. Here $\eta=10^5$. The standard deviation (std) of the NRMSE over 50 different noise realizations was negligible and therefore was not taken into account, e.g., for VARD-O, std $\approx 0.015\%$. } 
\label{table_RMSE1}
\end{table}
\begin{table}[h]
\centering
\subtable[MAP]{
\centering
\begin{tabular}{|c|c|c|c|c|}
\hline 
 & $\delta=10^{-5}$ & $\delta=10^{-4}$ & $\delta=10^{-3}$ & $\delta=10^{-2}$ \\ 
\hline 
$\beta=10^5$ & 5.29 \% & 3.01 \% & 1.51 \% & 9.30 \% \\ 
\hline 
$\beta=10^6$ & 2.99 \% & {\color{red} 1.36} \% & 7.27 \% & 40.00 \% \\ 
\hline 
$\beta=10^7$ & 1.85 \% & 11.60 \% & 43.69 \% & 64.59 \% \\ 
\hline 
$\beta=10^8$ &  21.94 \% &  58.19 \% &   73.94 \% &  {\color{red} 76.32} \% \\ 
\hline 
\end{tabular}
}
\subtable[MLE/VARD]{
\centering
\begin{tabular}{|c|c|}
\hline 
MLE & 5.64 \% \\ 
\hline 
VARD-C & 2.45 \% \\ 
\hline 
{\color{red} VARD-O} & {\color{red} 1.76} \% \\ 
\hline 
\end{tabular}}
\subtable[Reweighted $\ell2$]{
\centering
\begin{tabular}{|c|c|c|c|c|c|}
\hline 
& $\epsilon=10^{-8}$ &$\epsilon=10^{-6}$ & $\epsilon=10^{-4}$ & $\epsilon=10^{-2}$ \\ 
\hline 
Reweighted $\ell_2$-C & 46.2 \%  & 3.54 \%  & 2.49 \% & 5.23 \% \\ 
\hline 
Reweighted $\ell_2$-O & {\color{red} 21.34} \% &  1.5 \%  & {\color{red} 1.16 }\% & 4.80  \%  \\ 
\hline 
\end{tabular}
}
\caption{Same as Table~\ref{table_RMSE1} but for $\eta=10^4$. }
\label{table_RMSE2}
\end{table}

Some of the reconstructed images using these methods are shown in Fig.~\ref{fig_shepp}. For MAP and reweighted $\ell_2$ we present the results for two different choices of tuning parameters, one of which is the best choice we could find, and the other is a bad choice.  One can see in Figs.~\ref{fig_shepp_map2} and~\ref{fig_re_l2_eps2} how significant artifacts occur when the tuning parameters are not chosen correctly. In contrast, the VARD reconstruction in Fig.~\ref{fig_shepp_VARD} does not have any significant artifacts and the image quality is comparable to the best results obtained with the alternative methods (Figs.~\ref{fig_shepp_map1},\ref{fig_re_l2_eps1}).  
The reconstructed hyperparameters $\bd{\gamma}$ are shown in Fig.~\ref{fig_shepp_xsi} where one can see that $\bd{\gamma}$ has large values at discontinuities and very low values in regions where the attenuation levels are constant. Recall that VARD with the prior in \eqref{psi_diff1} promotes sparsity in the pixel-difference domain. Figure~\ref{fig_shepp_xsi} clearly indicates that VARD has learned the correct support of the object in the pixel-differences domain. The posterior standard deviation (std) is shown in Fig.~\ref{fig_shepp_var} and has a similar structure to $\bd{\gamma}$. Note that one should be careful when 

\begin{figure}[H]
\begin{center}
\subfigure[Truth]{%
\includegraphics[width=6cm]{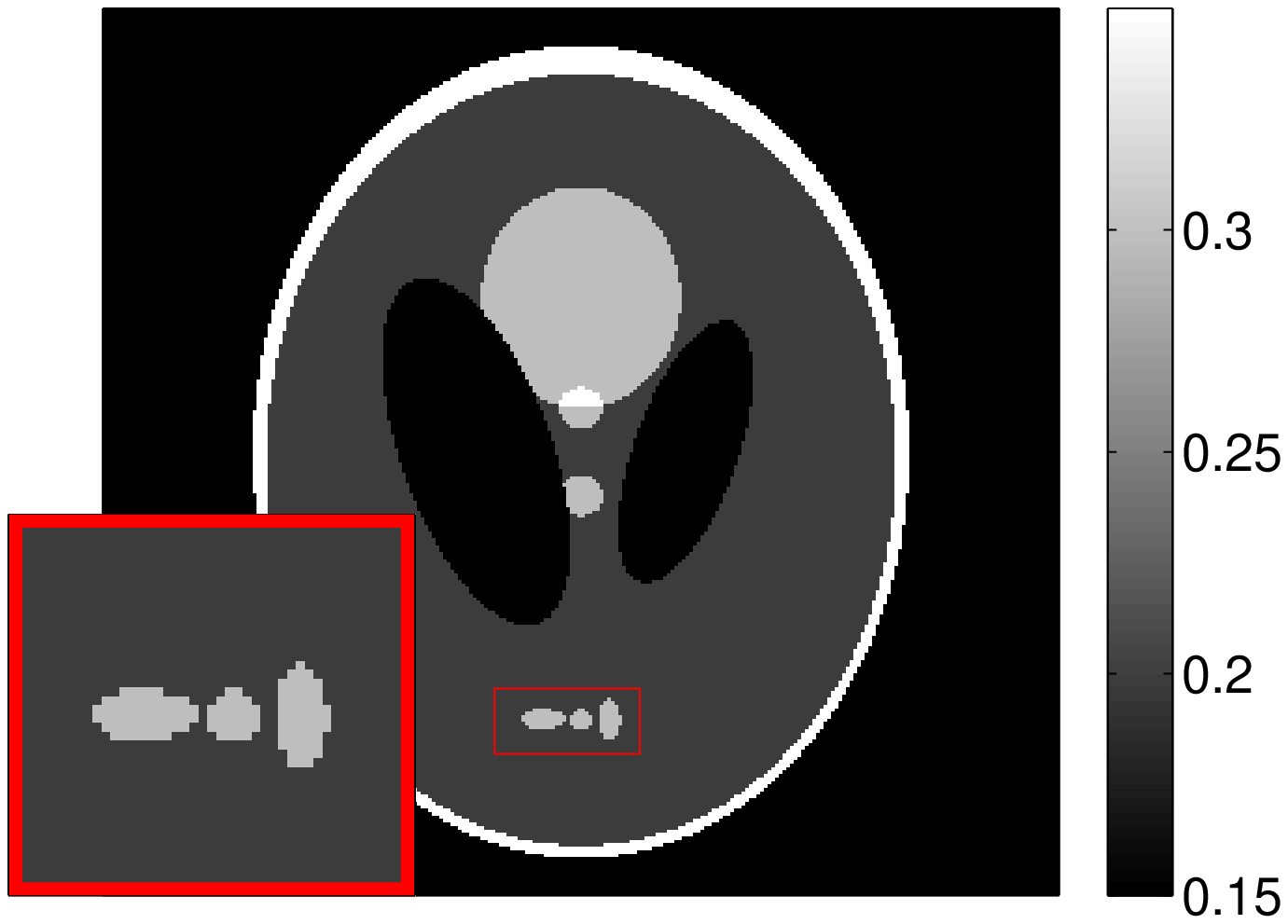}
\label{fig_shepp_truth}} %
\subfigure[MLE (NRMSE=1.63\%)]{%
\includegraphics[width=6cm]{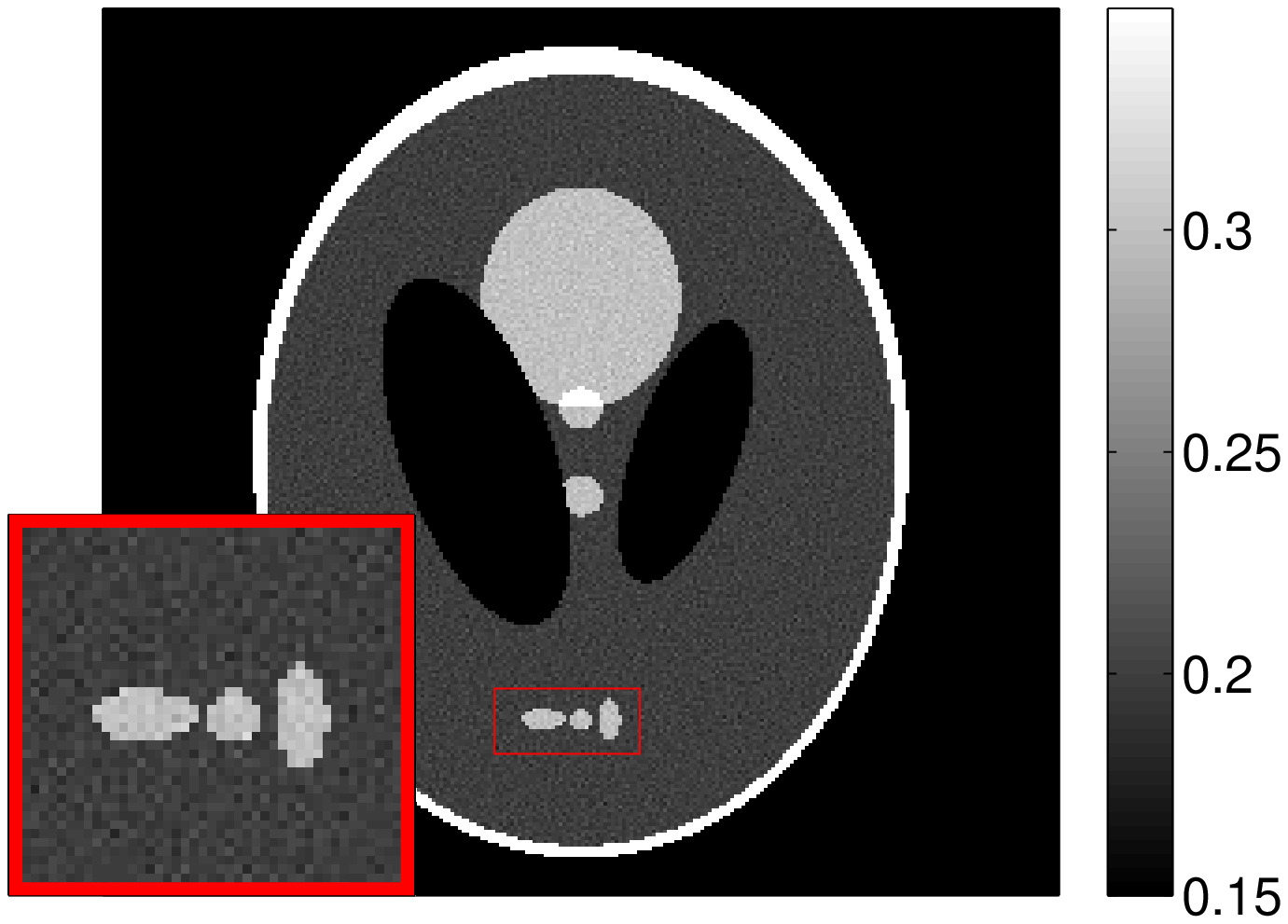} 
\label{fig_shepp_mle}} %
\\
\subfigure[MAP, $\beta=10^6$ (NRMSE=0.5\%)]{%
\includegraphics[width=6cm]{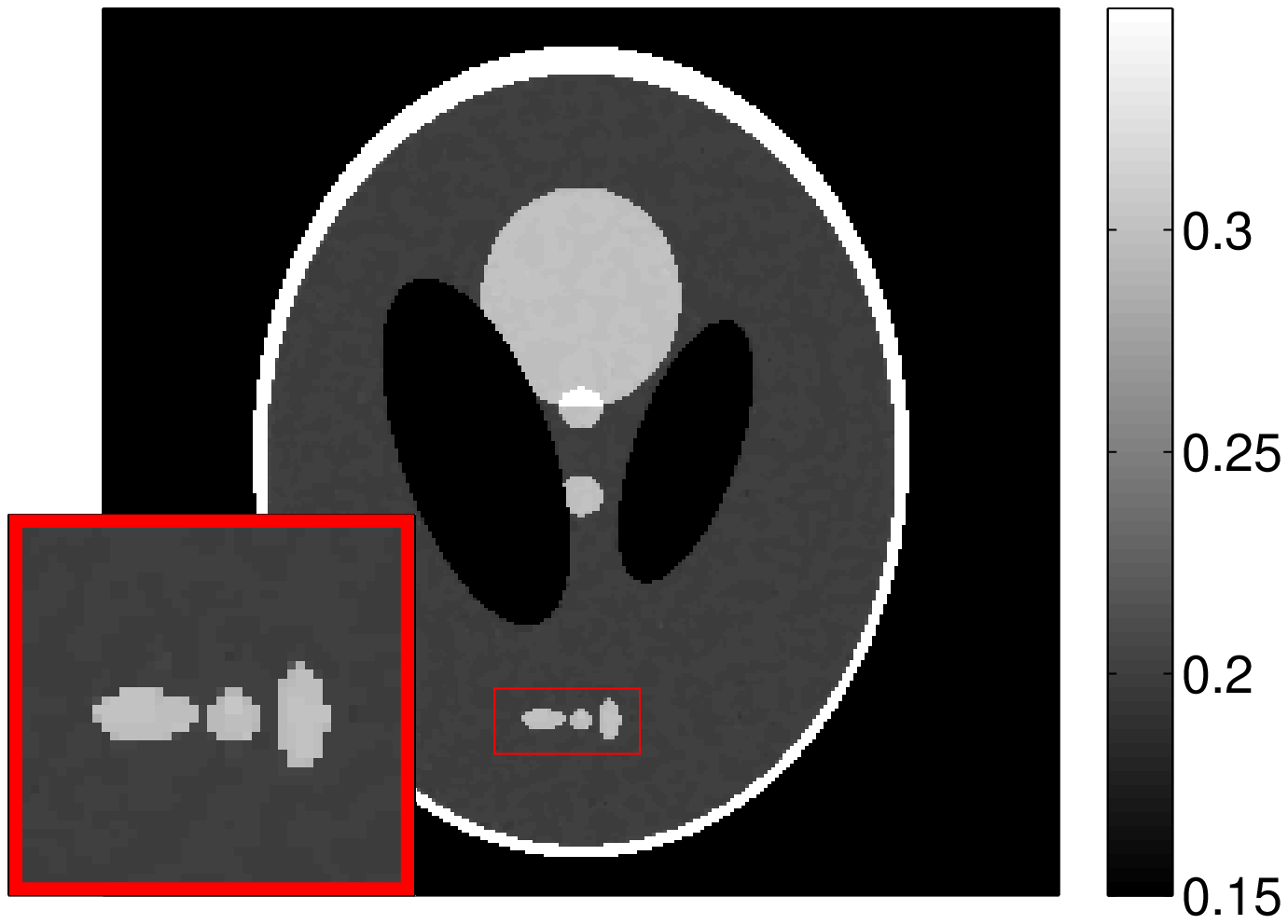} 
\label{fig_shepp_map1}} %
\subfigure[MAP, $\beta=10^7$ (NRMSE=7.2\%)]{%
\includegraphics[width=6cm]{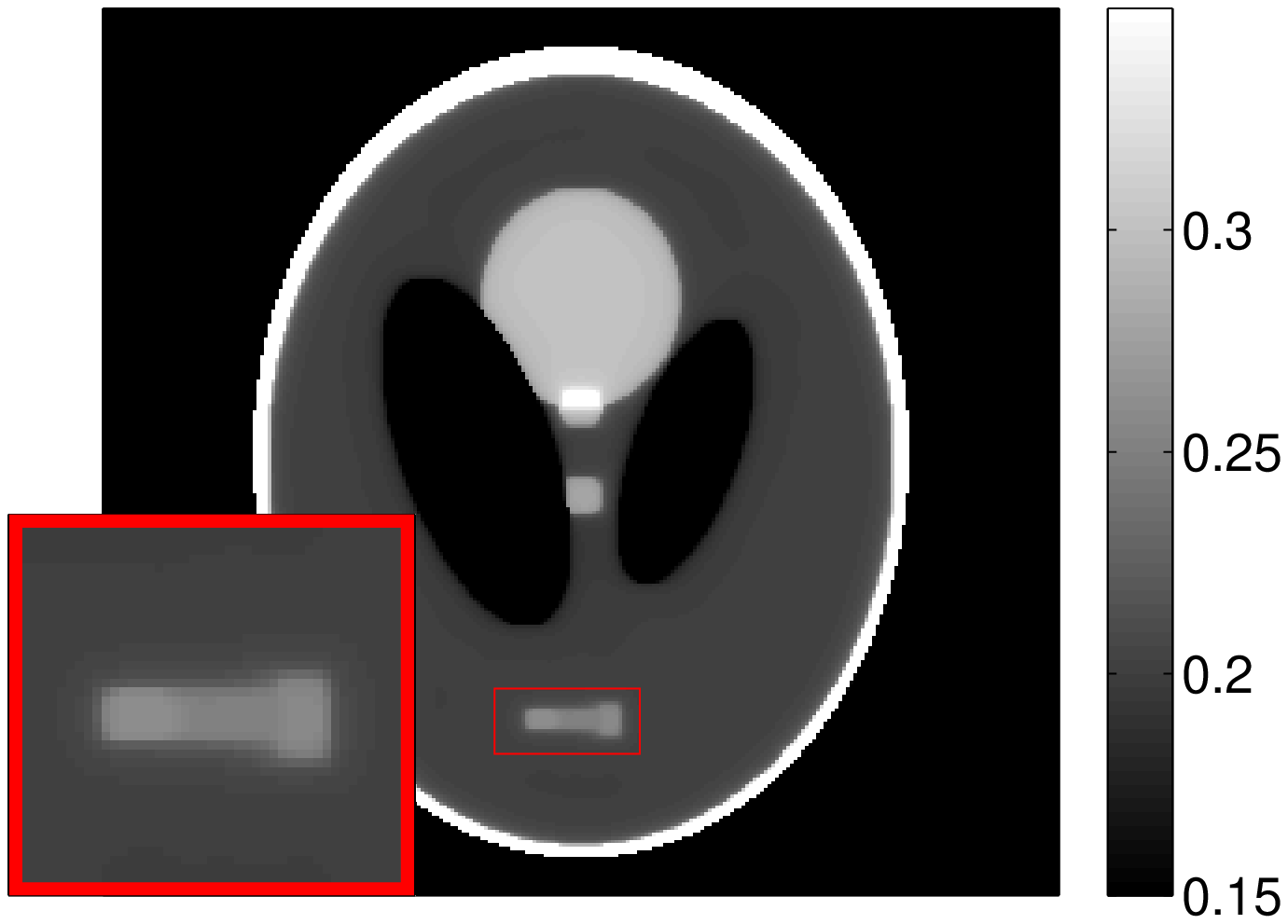} 
\label{fig_shepp_map2}} %
\\
\hspace{-1.5em}
\subfigure[Re-$\ell_2$-O, $\epsilon=10^{-6}$ (NRMSE=0.41\%)]{%
\includegraphics[width=6.3cm]{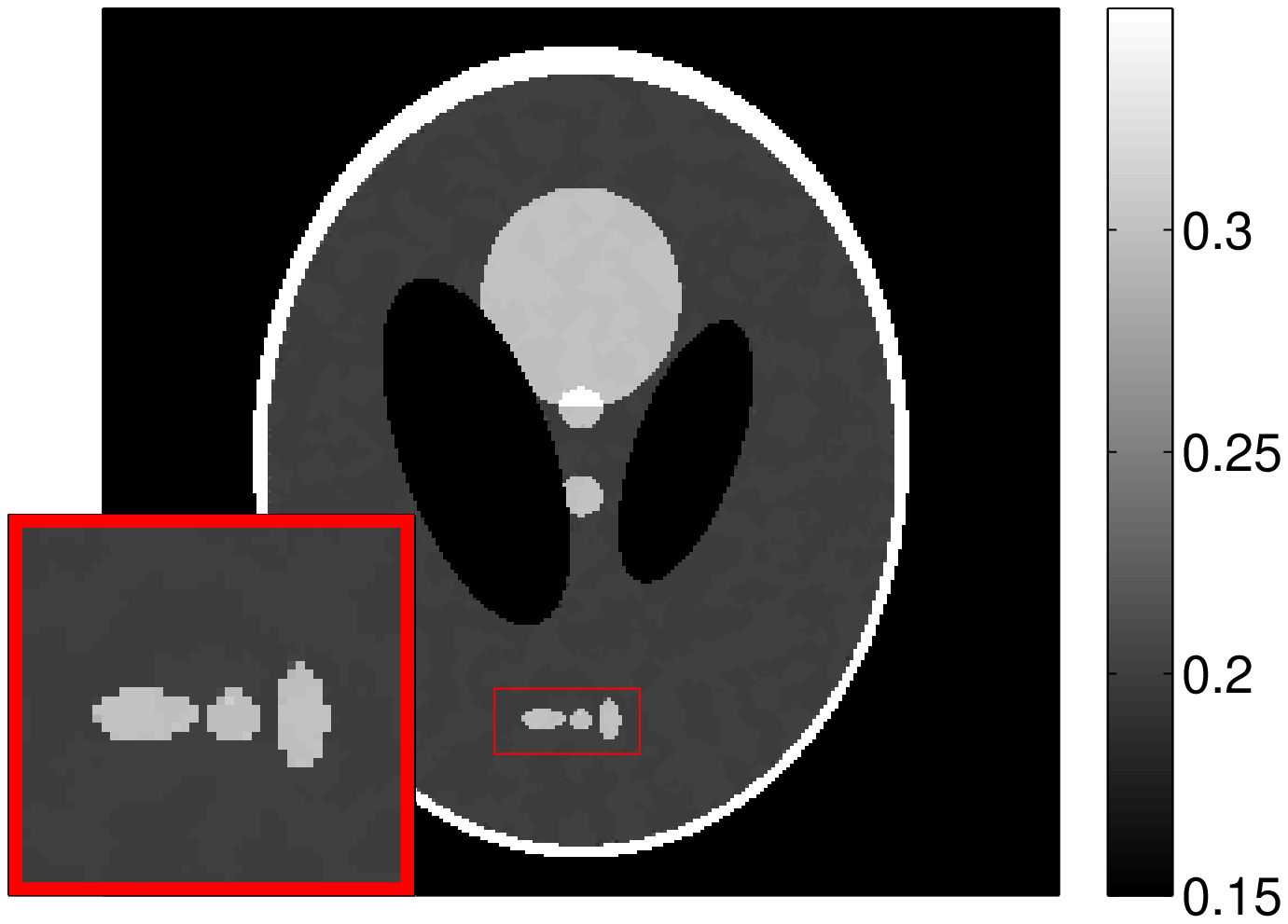}
\label{fig_re_l2_eps1}} % 
\subfigure[Re-$\ell_2$-O, $\epsilon=10^{-8}$ (NRMSE=2.88\%)]{%
\includegraphics[width=6cm]{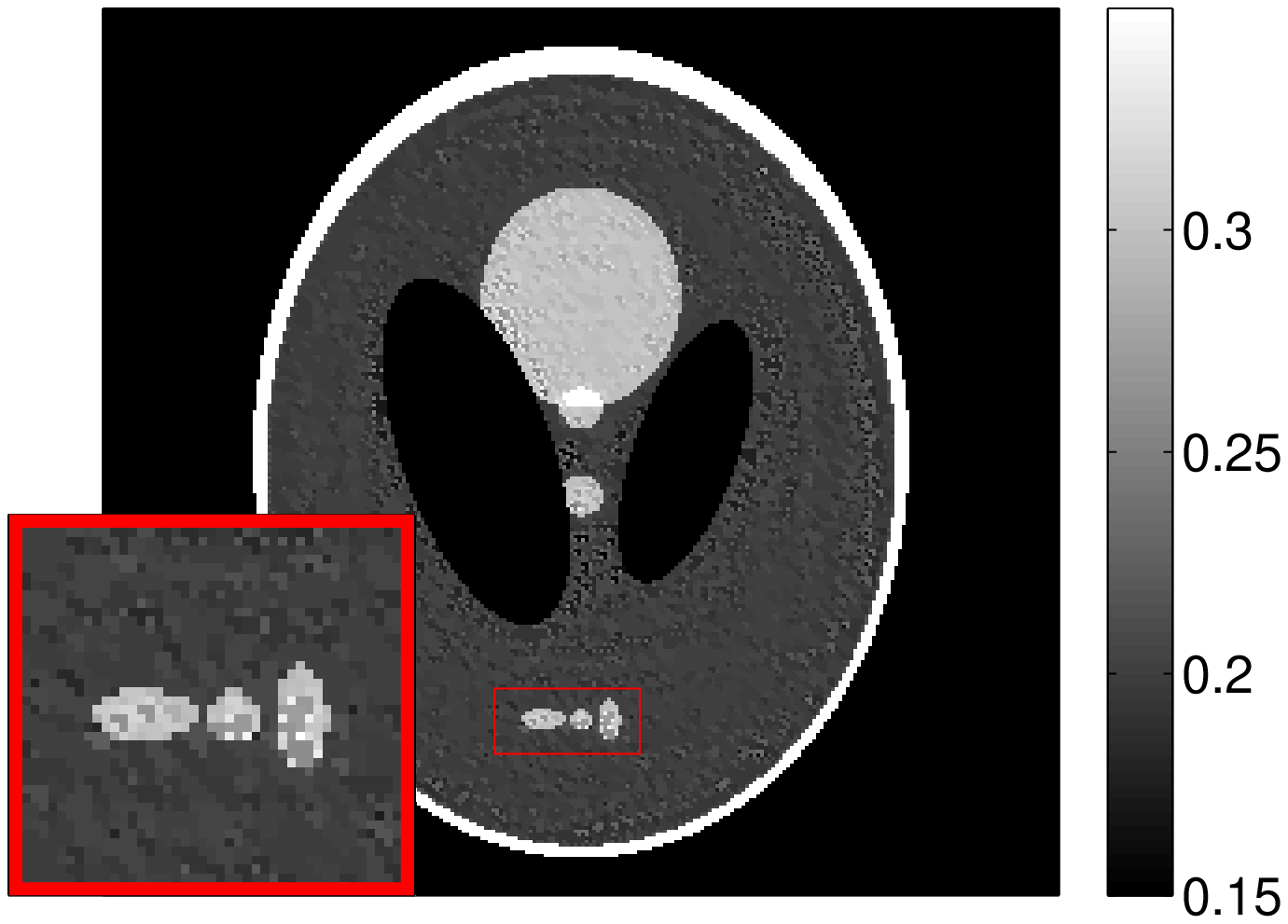}
\label{fig_re_l2_eps2}} % 
\\
\subfigure[VARD-O (NRMSE=0.68\%)]{%
\includegraphics[width=5.8cm]{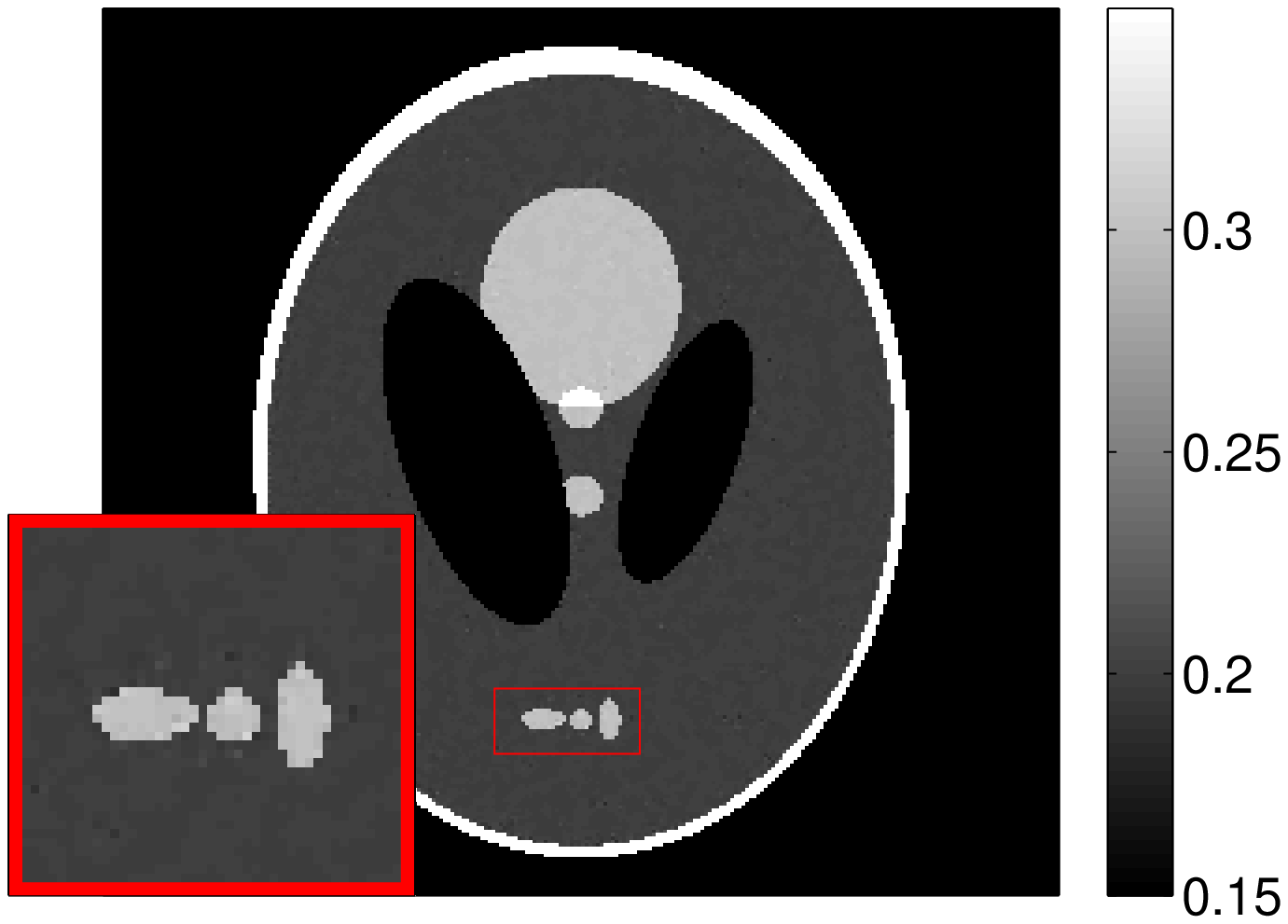}
\label{fig_shepp_VARD}} % 
\,
\subfigure[Objective functions]{%
\includegraphics[width=6.2cm]{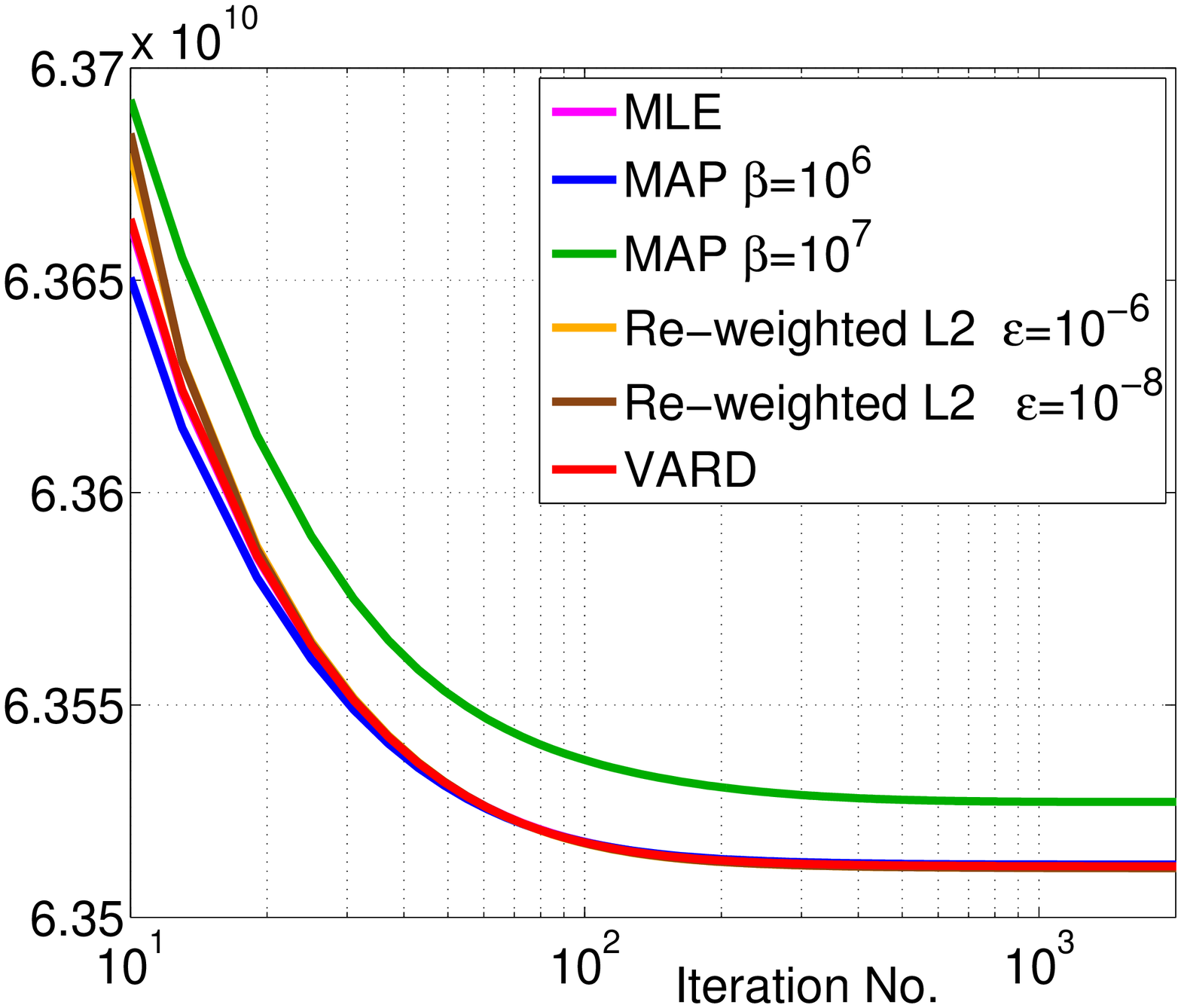} 
\label{fig_shepp_objective}} %
\end{center}
\caption{Reconstructions of attenuation images (relative to water) from synthetic data with Poisson noise using different methods. All images have resolution $256\times 256$. (a) Truth;  (b) Maximum likelihood (MLE) estimate; (c) Maximum a posteriori (MAP) estimate using the smooth edge-preserving neighborhood penalty with $\beta=10^6$ and $\delta=10^{-4}$; (d) same as Fig.~(c) but with $\beta=10^7$ and $\delta=10^{-3}$; (e) Reweighted $\ell_2$-O with $\epsilon=10^{-6}$; (f) same as (e) but with $\epsilon=10^{-8}$ ; (g) Posterior mean using VARD-O;  (h) Objective functions for all methods (note that each algorithm is optimizing a different objective). All images are displayed using the  window $[0.15, 0.35]$, even though the phantom includes values in $[0, 1]$, in order to enhance the differences between the different methods.  A zoom-in view of the region inside the small red frame is shown in the inset to the left in figures (a)--(g). All algorithms except MLE use a neighborhood penalty with  one horizontal and one vertical neighbor per pixel. All algorithms were run till 2000 iterations to ensure convergence. We recommend viewing the figures with at least 150\% zoom-in.}  
\label{fig_shepp}
\end{figure}

\noindent interpreting the posterior std since the support of the posterior is $(-\infty,\infty)$ (see \eqref{VG}) and one std from the mean can extend to negative attenuation values.  In Sec.~\ref{real_dat} we show a useful interpretation of the posterior variances.

Figure~\ref{fig_shepp_objective} presents the objective function for VARD-O (\eqref{F}--\eqref{F_III}) vs. number of iterations, as well as the objectives for all the other methods. The objective is seen to decrease monotonically, as predicted by theory, and can be used to assess convergence in practice, as well as to verify the correctness of the implementation. It can also be seen that all methods share similar convergence rates. It is important to note that each algorithm is optimizing a \emph{different} objective function so to compare convergence rates one needs to compare the slopes of the objectives and not the objective values. 

In Table~\ref{table_ARD} we compare VARD to the ARD method from \cite{Danniel} (SBL) which is based on an approximate post-log Gaussian noise model. Details about SBL can be found in Appendix~\ref{apx_B}. 
After convergence, we set all the negative pixel values in the SBL image to zero in order to obtain a physically meaningful result (since SBL does not impose positivity). From Table~\ref{table_ARD} one can see that SBL leads to higher NRMSE than VARD-C. For the highest photon flux (lowest noise level) with $\eta=10^5$, SBL \cite{Danniel} is comparable to VARD-C, but as the photon flux gets lower ($\eta=10^4$ and $\eta=10^3$), the difference between SBL and VARD-C becomes more noticeable. Since SBL and VARD-C use the same prior, the results suggest that the approximate Gaussian noise model is quite effective for the highest photon flux but becomes less effective as the flux decreases, as expected. Importantly, SBL has a notably higher RSME than VARD-O, especially for lower flux levels, which suggests that the over-complete prior in \eqref{psi_diff1} used by VARD-O provides another advantage over SBL. Note that SBL does not allow the use of the over-complete prior due to the assumptions used in deriving the M-step \cite{Danniel}, which are similar to the ones used for the F-step in Sec.~\ref{Pres_AM_VARD} but were later lifted in Sec.~\ref{gen_AM}.
%Table~\ref{table_ARD} also shows that for lower photon flux (higher noise level) $\eta=10^3$, SBL leads to higher RMSE than both VARD-O and VARD-C, which suggests that both the over-complete prior and the Poisson noise model lead to the improved performance of VARD. 
We initialized SBL in the same way as VARD,  and SBL took 100 iterations to converge. Note that each iteration of SBL has a much higher computational cost than an iteration of VARD (see details in Sec.~\ref{sec_comp}). To illustrate the differences in computational costs, we note that VARD took about 20 minutes to run whereas SBL took 5 hours!  (SBL was run on CPU's, like VARD, and not on a GPU as in \cite{Danniel}).
%To emphasize this point further, SBL took about 10 hours to run, while VARD took about 20 minutes. 
%This is shown more clearly in Figure~\ref{fig_RMSE2} which compares the RSME of SBL and VARD against the number of forward and back-projections, with the latter serving as an indication of computational complexity. One can see that VARD is much cheaper computationally than SBL and overall provides better image quality.

\begin{table}[h]
\centering
\begin{tabular}{|c|c|c|c|}
\hline
 & $\eta=10^5$ & $\eta=10^4$ & $\eta=10^3$ \\ 
\hline 
SBL &  0.97 \% & 3.15 \% & 9.82 \% \\ 
\hline
VARD-C & 0.85 \% & 2.45 \% & 7.35 \%  \\ 
\hline 
VARD-O & 0.68 \% & 1.76 \% & 5.2 \%  \\ 
%\hline 
%Laplace-ARD & \% &  \% &    \% \\ 
\hline 
\end{tabular}
\caption{Comparison of the NRMSE obtained using VARD and SBL \cite{Danniel} for different photon flux levels. 
%All algorithm were run till convergence with 2000 iterations for VARD and 100 iterations for SBL. 
%Each iteration of SBL involves solution of $65$ least squares problems ($64$ for estimating the posterior variances and $1$ for the mean; see Appendix~\ref{apx_B} for more details) using conjugate gradient (CG) and each CG involves about 40 sub-iterations, each requiring a forward and back projection. Note that 100 iterations of SBL involve about $2.6\times 10^5$ forward/back-projections whereas 2000 iterations of VARD require only 2000 forward/back-projections.
}
\label{table_ARD}
\end{table}

There are several additional advantages to using VARD instead of SBL: (1) SBL does not enforce positivity of the solution, whereas in VARD, it is built into the algorithm. Note that setting all the negative pixel values to zero after SBL converges might be sub-optimal. (2) Generally, the straightforward computation of the posterior variances requires $O(p^3)$ operations  and $O(p^2)$ of memory ($p$ is the number of pixels/voxels) which is infeasible for large scale problems so approximations are needed \cite{Danniel}. However, these approximations result in the loss of convergence guarantees for the overall SBL algorithm. In contrast, VARD assumes a posterior distribution that while inexact naturally leads to lower computational costs and guarantees convergence. (3) In SBL, one is required to choose the type and number of probing vectors for approximating the variances, which are system and object dependent.  (4) Computing the objective function of SBL scales as $O(p^3)$, making this operation infeasible (in this example, we could not compute the SBL objective  within reasonable time), whereas computing the objective for VARD scales as $O(n p^{1/D})$ with $n$ being the number of measurements and $D=2,3$ for reconstruction of 2D/3D images, respectively. The objective provides another tool for checking the implementation and can be used to assess convergence.
%(4) To maintain scalability, SBL relies on conjugate gradient solvers which can suffer from slow convergence without a good preconditioner and finding a preconditioner is a challenging task for CT (see \cite{ramani2012splitting} and references therein);
%
%\begin{figure}[h]
%\begin{center}
%\subfigure[SBL vs. VARD]{%
%\includegraphics[width=6cm]{objective_all.eps}
%\label{fig_RMSE1}} %
%\subfigure[VARD vs MAP]{%
%\includegraphics[width=6.15cm]{objective_all.eps} 
%\label{fig_RMSE2}} %
%\end{center}
%\caption{RMSE vs iteration. (a) Compare VARD to SBL (b) Compare VARD to MAP. In (a) the horizontal axis is the number of forward and back-projections which indicates computational complexity.} 
%\label{fig_RMSE}
%\end{figure}

\begin{figure}[h]
\begin{center}
\subfigure[VARD hyperparameters $\bd{\gamma}$ (std)]{%
\includegraphics[width=6cm]{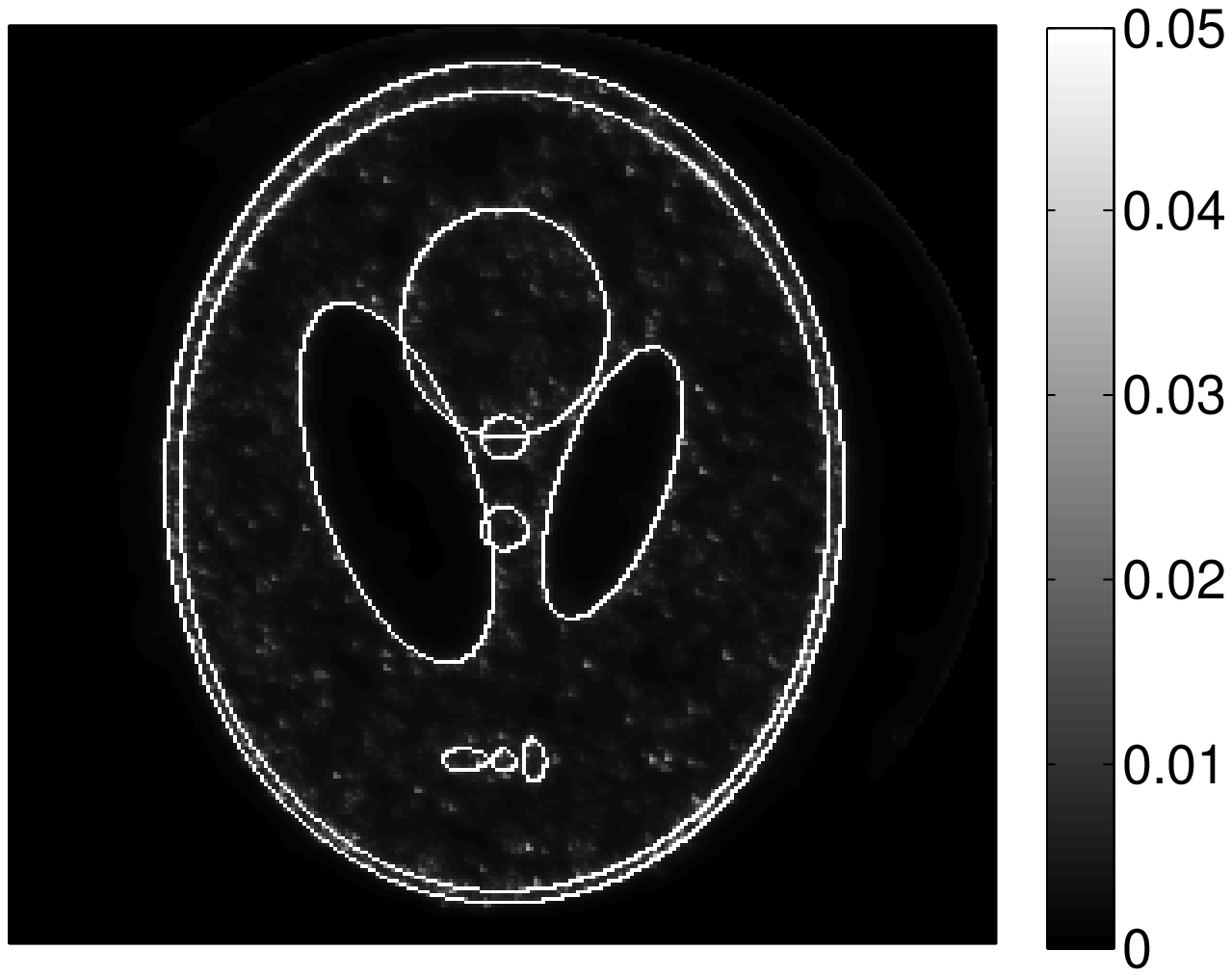}
\label{fig_shepp_xsi}} %
\subfigure[VARD posterior std]{%
\includegraphics[width=6.15cm]{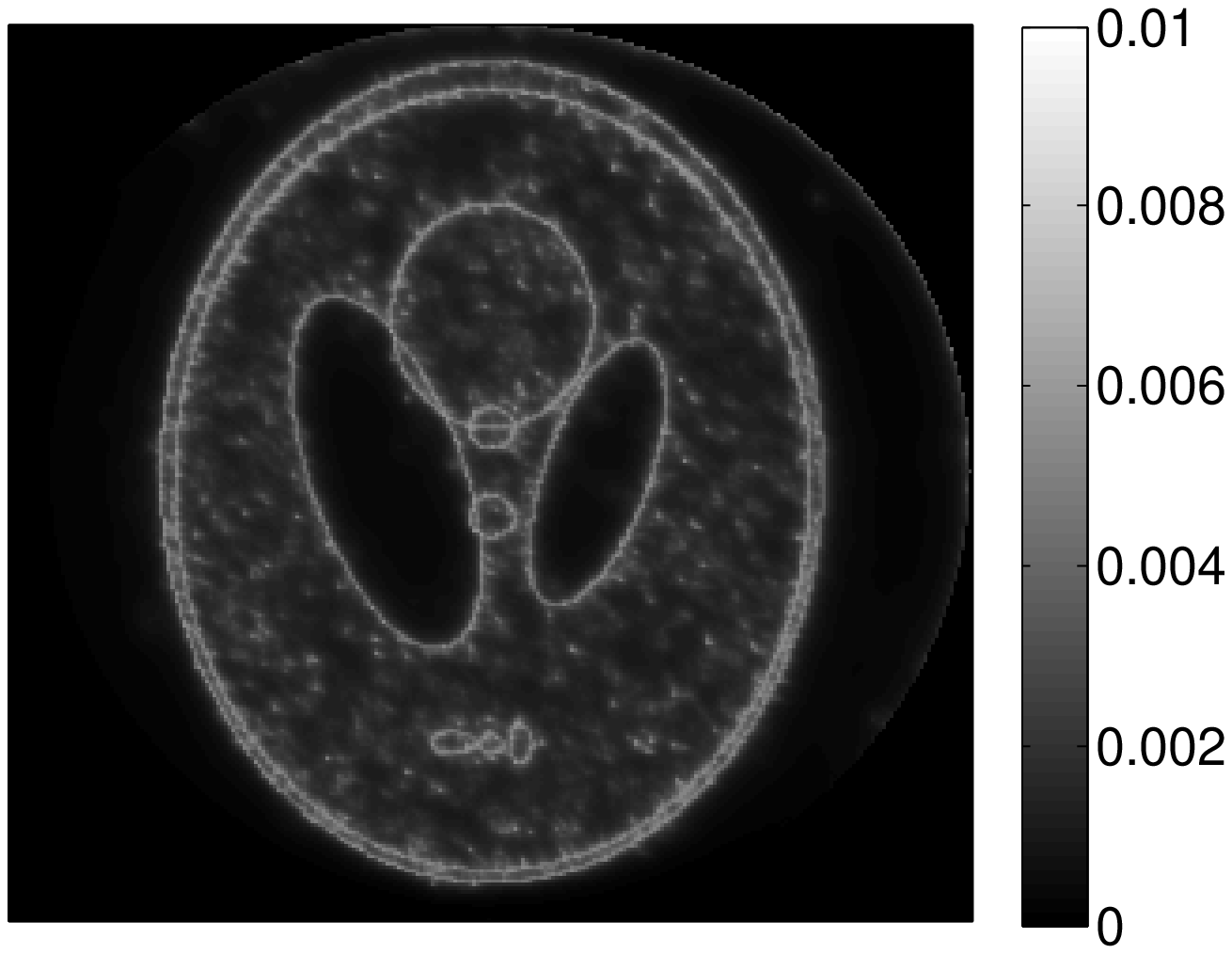} 
\label{fig_shepp_var}} %
\end{center}
\caption{Reconstructed standard deviations (std) using VARD on synthetic data. (a) squared root of hyperparameters $\bd{\gamma}$; (b) squared root of posterior variances $\bd{v}$.} 
\label{fig_shepp_variance}
\end{figure}
\subsection{Real Data} \label{real_dat}

Next we present reconstructions for a dataset acquired at an experimental laboratory at Duke University with the geometry specified in Fig.~\ref{fig_fan}. The number of views was $360$ (full scan around the object) with view-angle resolution of $1^\circ$. For each view $1781$  detector pixels were used. The total number of measurements was $641,160$. Additional details about the experimental setup are provided in Appendix~\ref{apx_D}. We measured a 2D cross section of an acrylic glass cylinder full of cylindrical holes with different diameters. In this case, the image for the ground truth is unknown. However, a good estimate of the truth is based on the specifications from the manufacturer of this phantom. The attenuation  of the acrylic glass relative to water is $1.5$, and for the inside of the rods it is $0$ (air).   Figure~\ref{fig_inv_rods} presents reconstructions using different methods. Image resolution is $256\times 256$. For MAP and reweighted $\ell_2$ we show two examples of  reconstructions, one for the best choice of the tuning parameter that we could find, and one with significant artifacts for a bad choice of tuning parameters.  
Again, one can see how VARD produces images which are comparable in quality to the best trial with MAP or reweighted $\ell_2$ but without any need for repeated trials.  
Figure~\ref{fig_inv_rods_variance} shows the reconstructed posterior variances and hyperparameters, where one can see again how VARD learns the correct support of the object in the pixel-differences domain.

Finally, in Figs.~\ref{fig_duke}--\ref{fig_duke2} we present an example which illustrates the robustness of VARD to missing data and also demonstrates a useful interpretation of the posterior variances. In this example we scanned a ``DUKE'' letters object suspended in mid-air using the same experimental system. The letters are made from VeroBlue material with a linear attenuation 
\begin{figure}[H]
\centering
\subfigure[MLE]{%
\includegraphics[width=6.3cm]{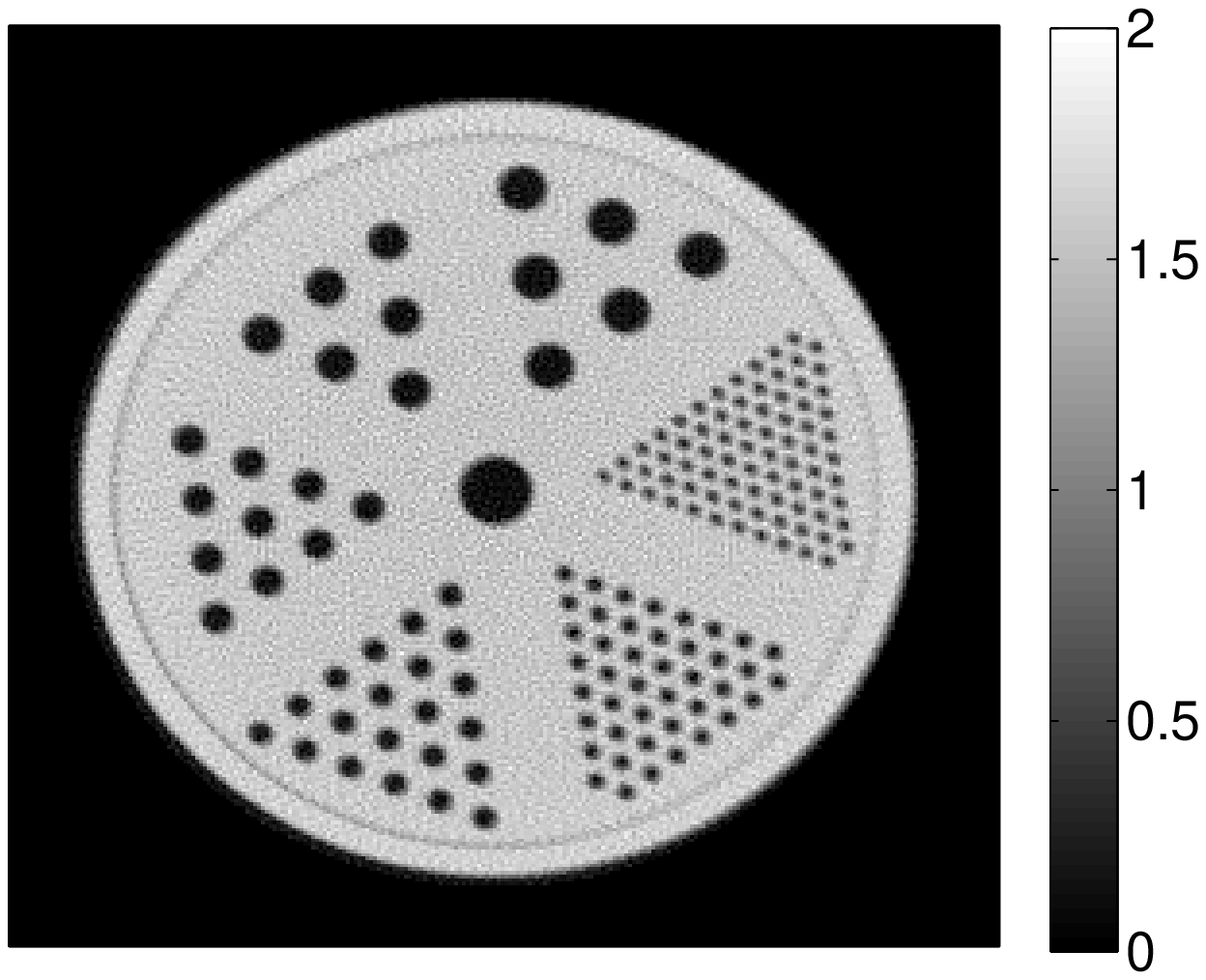}
\label{fig_rods_mle}} %
\subfigure[VARD-O]{%
\includegraphics[width=6.3cm]{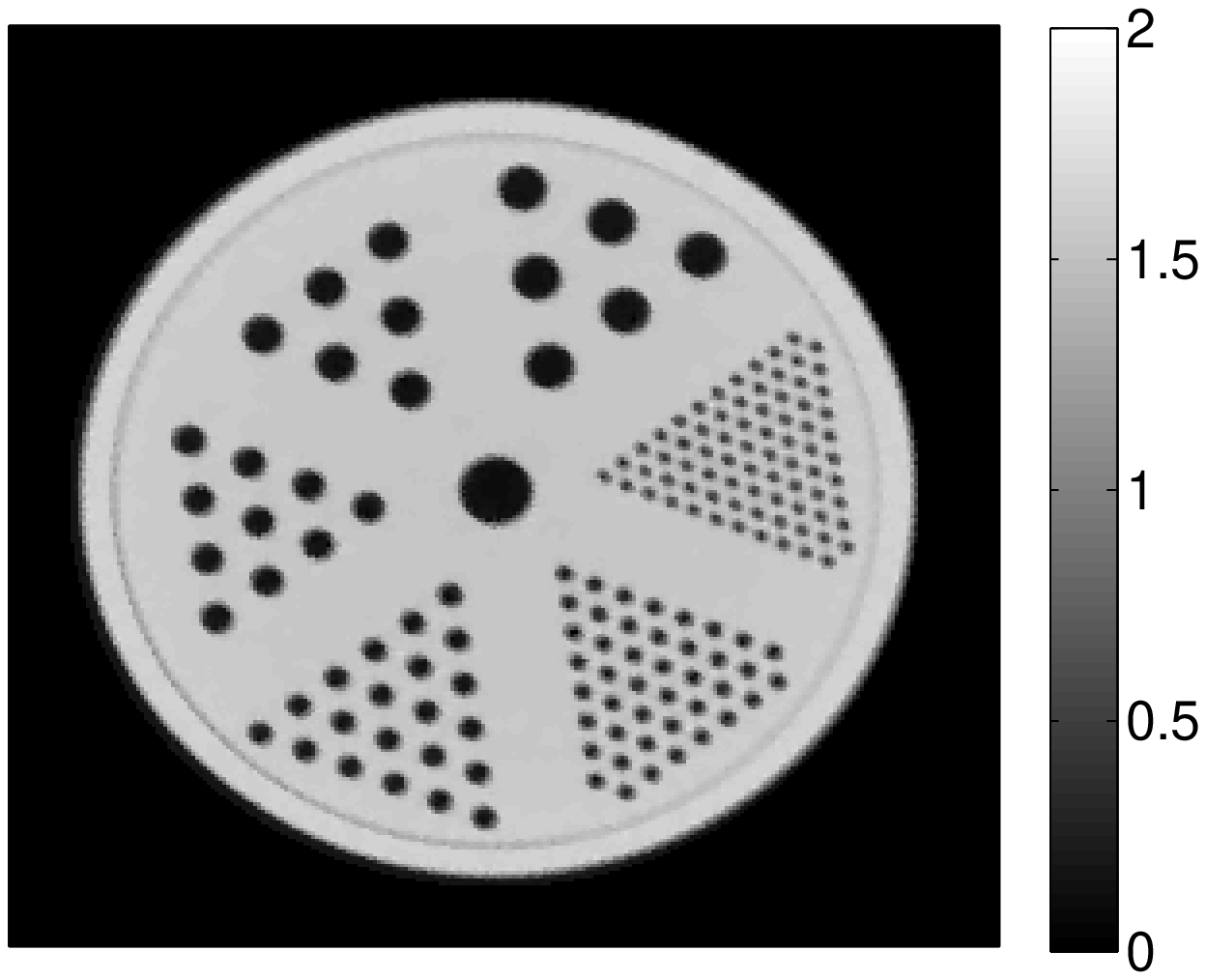} 
\label{fig_rods_VARD}} %
\\
\subfigure[MAP, $\beta=10^3$, $\delta=0.1$]{%
\includegraphics[width=6.3cm]{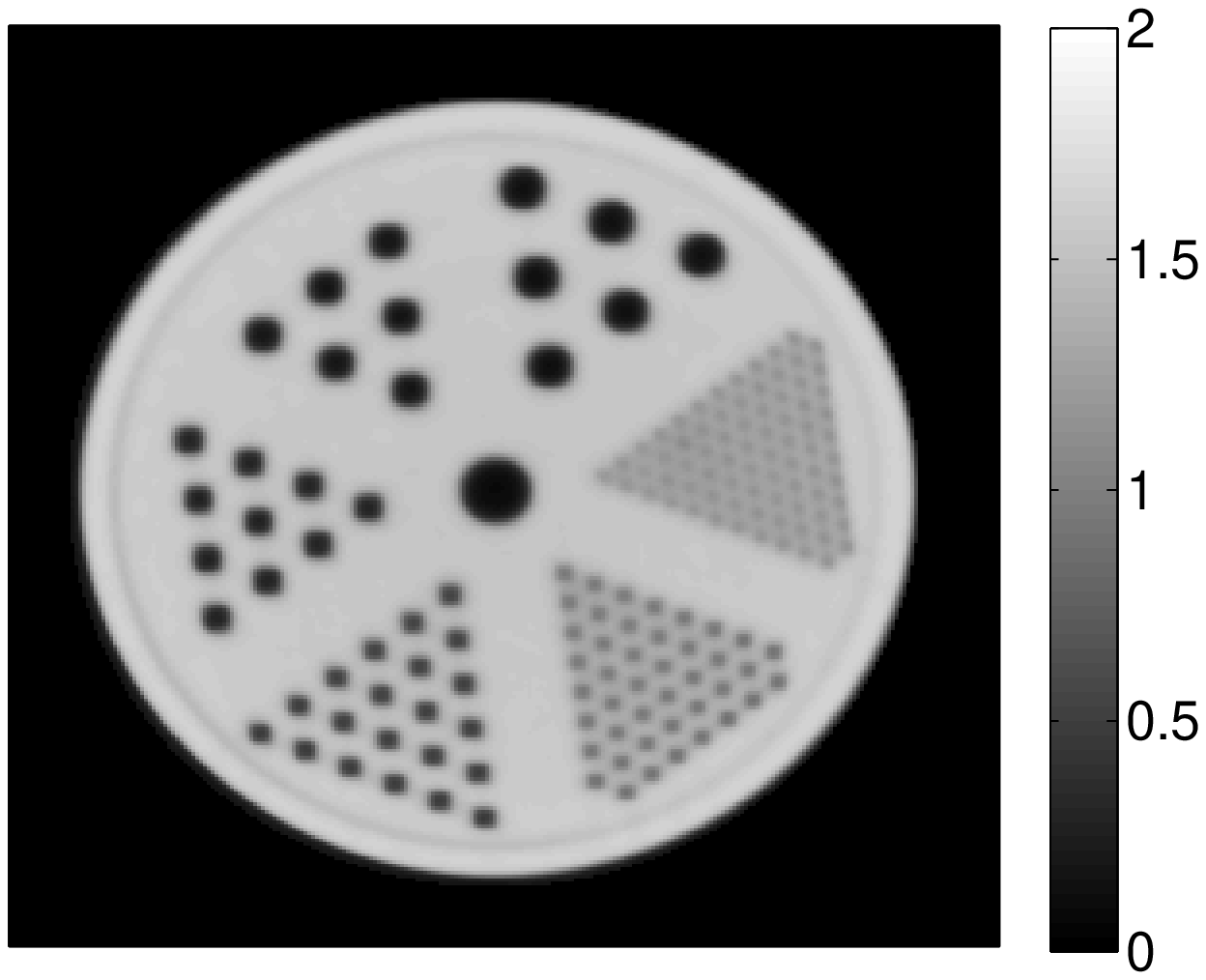} 
\label{fig_rods_map}} %
\subfigure[MAP, $\beta=10^2$, $\delta=0.1$]{%
\includegraphics[width=6.3cm]{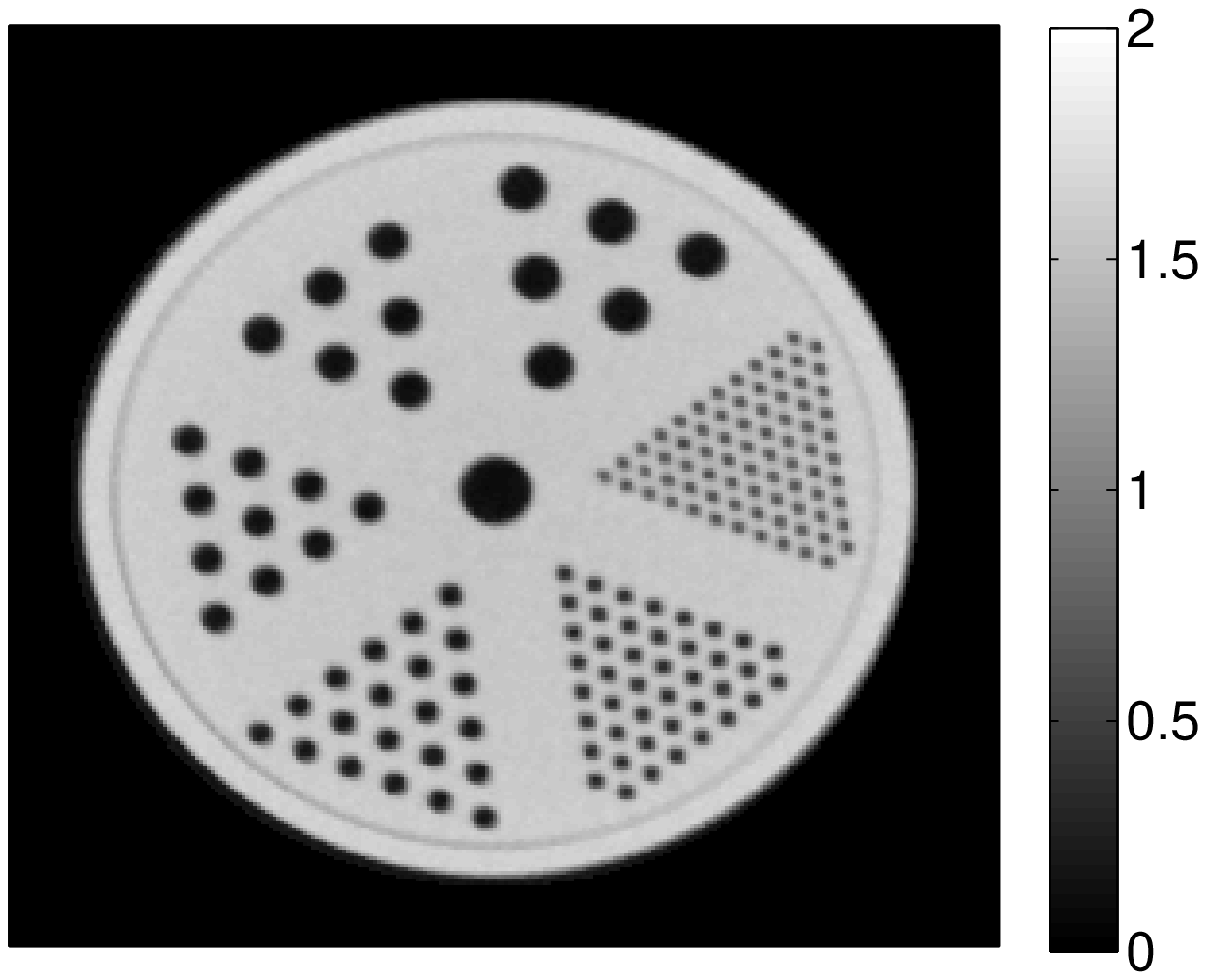} 
\label{fig_rods_map2}} %
\\
\subfigure[Re-$\ell_2$-O, $\epsilon=10^{-5}$]{%
\includegraphics[width=6.3cm]{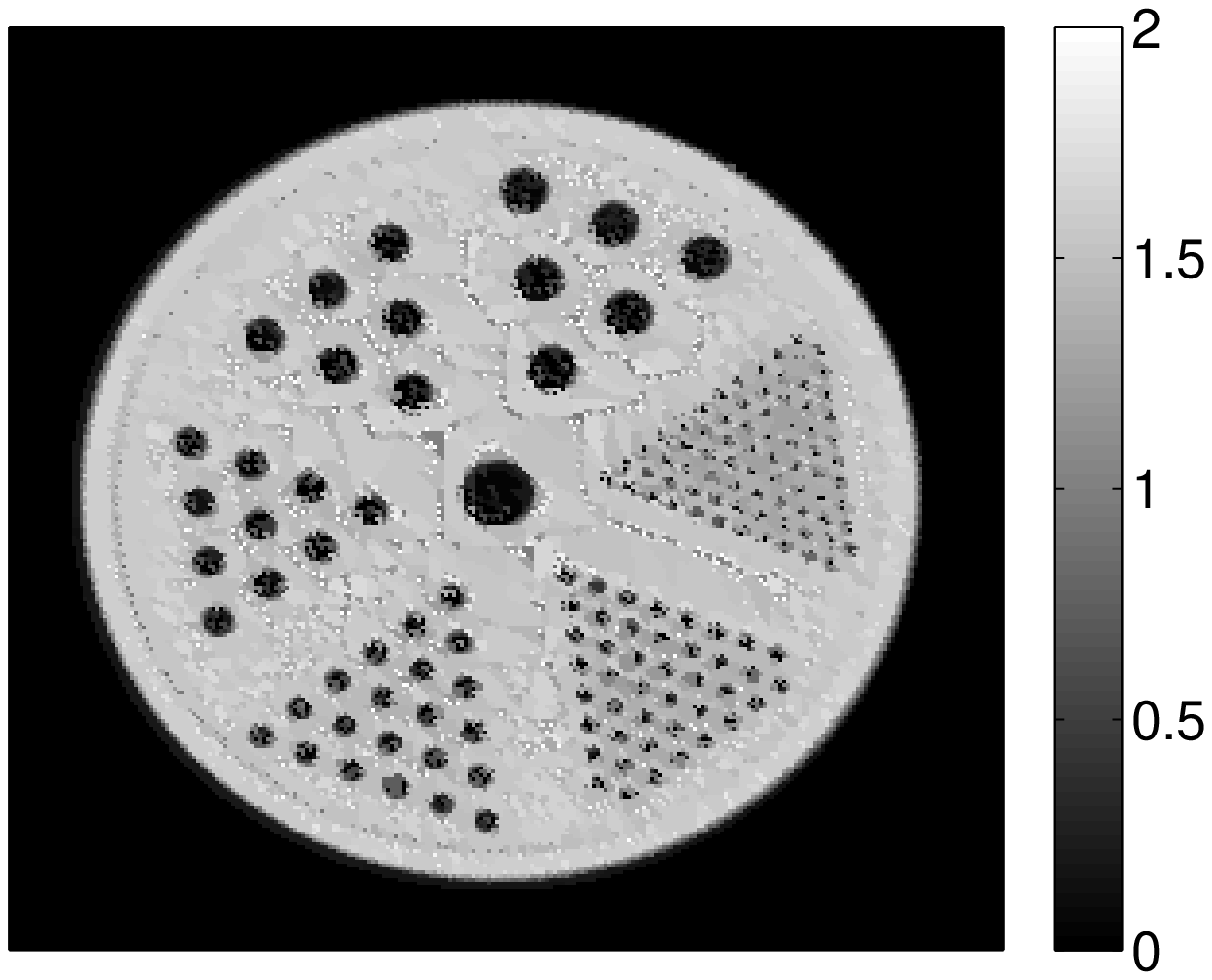}
\label{fig_rods_rel2_1}} %
\subfigure[Re-$\ell_2$-O, $\epsilon=10^{-3}$]{%
\includegraphics[width=6.3cm]{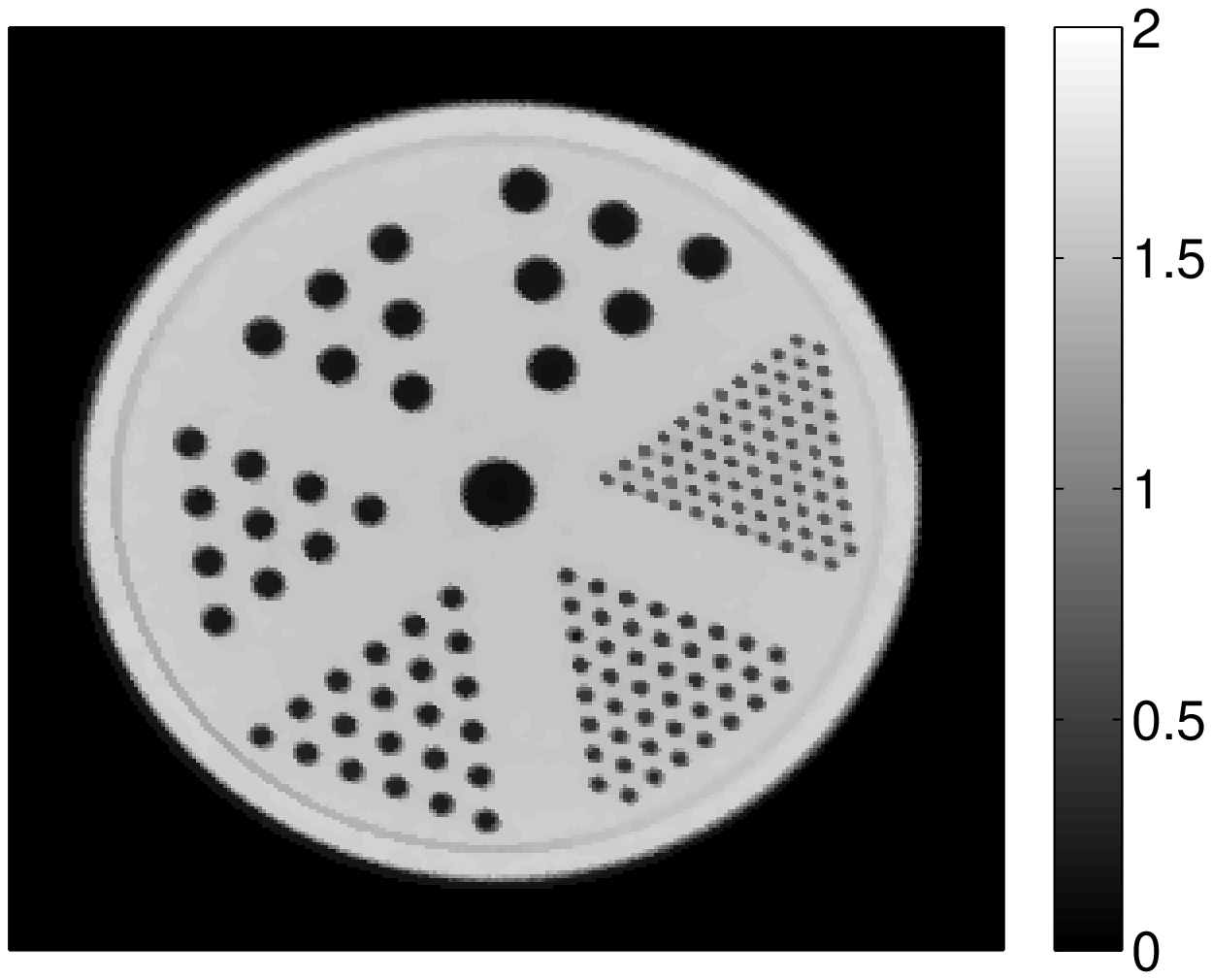} 
\label{fig_rods_rel2_2}} %
%\end{center}
\caption{Reconstructions of attenuation maps (relative to water) from real data using different methods. (a) Maximum likelihood (MLE) estimate; (b) VARD-O; (c) Maximum a posteriori (MAP) estimate using the smooth edge-preserving neighborhood penalty with $\beta=10^3$ and $\delta=0.1$;  (d) Same as (c) but with $\beta=10^2$; (e) Reweighted $\ell_2$-O with $\epsilon=10^{-5}$;  (f) Same as (e) but with $\epsilon=10^{-3}$.  All algorithms use 2000 iterations to ensure convergence. All except MLE use a  neighborhood penalty with one horizontal and one vertical neighbor per pixel. We recommend viewing the figures with at least 150\% zoom-in. All objectives were monotonically decreasing and flat when the algorithms were terminated.}  
\label{fig_inv_rods}
\end{figure}

\noindent coefficient of $1.458$ relative to water (more details in caption of Fig.~\ref{fig_duke}). 
The expected image is sparse so we used the prior in \eqref{prior} with $\bd{\Psi}=\bd{I}$ to promote sparsity in the pixel basis. 

We subsampled the angular grid for the source-locations (views) so only $1/8$ or $1/16$ of the views are used;  the VARD reconstructions for these two cases are shown in Fig.~\ref{fig_Duke_mean_sub8} and Fig.~\ref{fig_Duke_mean_sub16}, respectively. In addition, we include in Fig.~\ref{fig_Duke_FBP_sub8_full} and ~\ref{fig_Duke_FBP_sub16_full} the reconstructions using filtered-back-projection (FBP) \cite{slaney1988principles}, 
which is a linear reconstruction method based on Fourier inversion. Note that the FBP reconstruction exhibits much more prominent artifacts than VARD in the form of streaks, which are due to view undersampling. We present the FBP reconstructions again in Figs~\ref{fig_Duke_FBP_sub8}--\ref{fig_Duke_FBP_sub16} where the displayed attenuation values were limited to a smaller range in order to enhance the artifacts. There is a striking resemblance between the streaks in the posterior std images obtained by VARD in Figs~\ref{fig_Duke_var_sub8}--\ref{fig_Duke_var_sub16} and the streaks in the FBP reconstructions in Figs~\ref{fig_Duke_FBP_sub8}--\ref{fig_Duke_FBP_sub16}. This demonstrates that the variances can indicate regions in the image which are insufficiently sampled. A more detailed study of this relationship will be deferred to a subsequent publication.   Lastly, we note that although we have focused on the 2D fan-beam geometry in the numerical experiments, the proposed method can be applied to any other geometry.

\begin{figure}[h]
\begin{center}
\subfigure[VARD prior std]{%
\includegraphics[width=6.3cm]{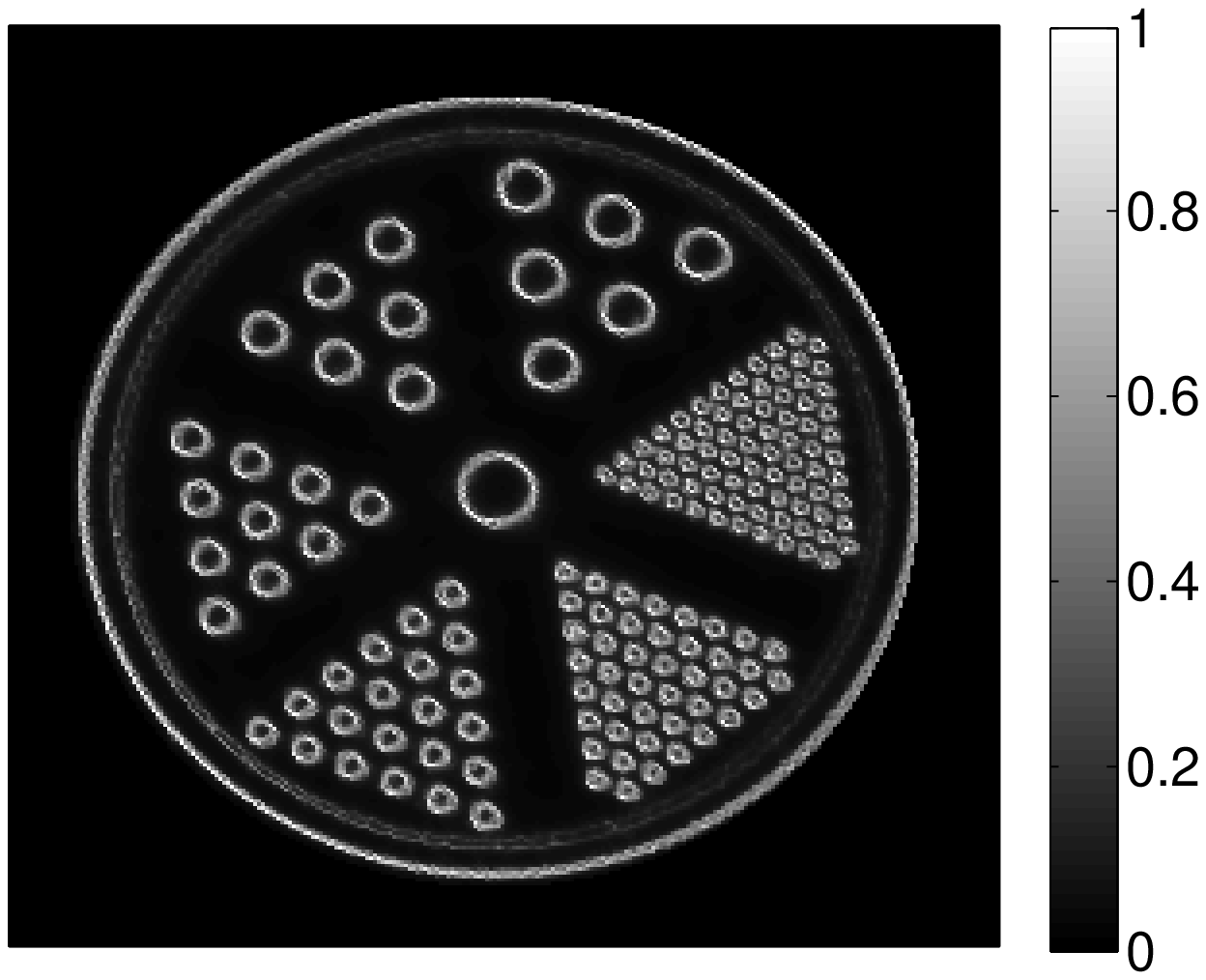}
\label{fig_rods_xsi}} %
\subfigure[VARD posterior std]{%
\includegraphics[width=6.3cm]{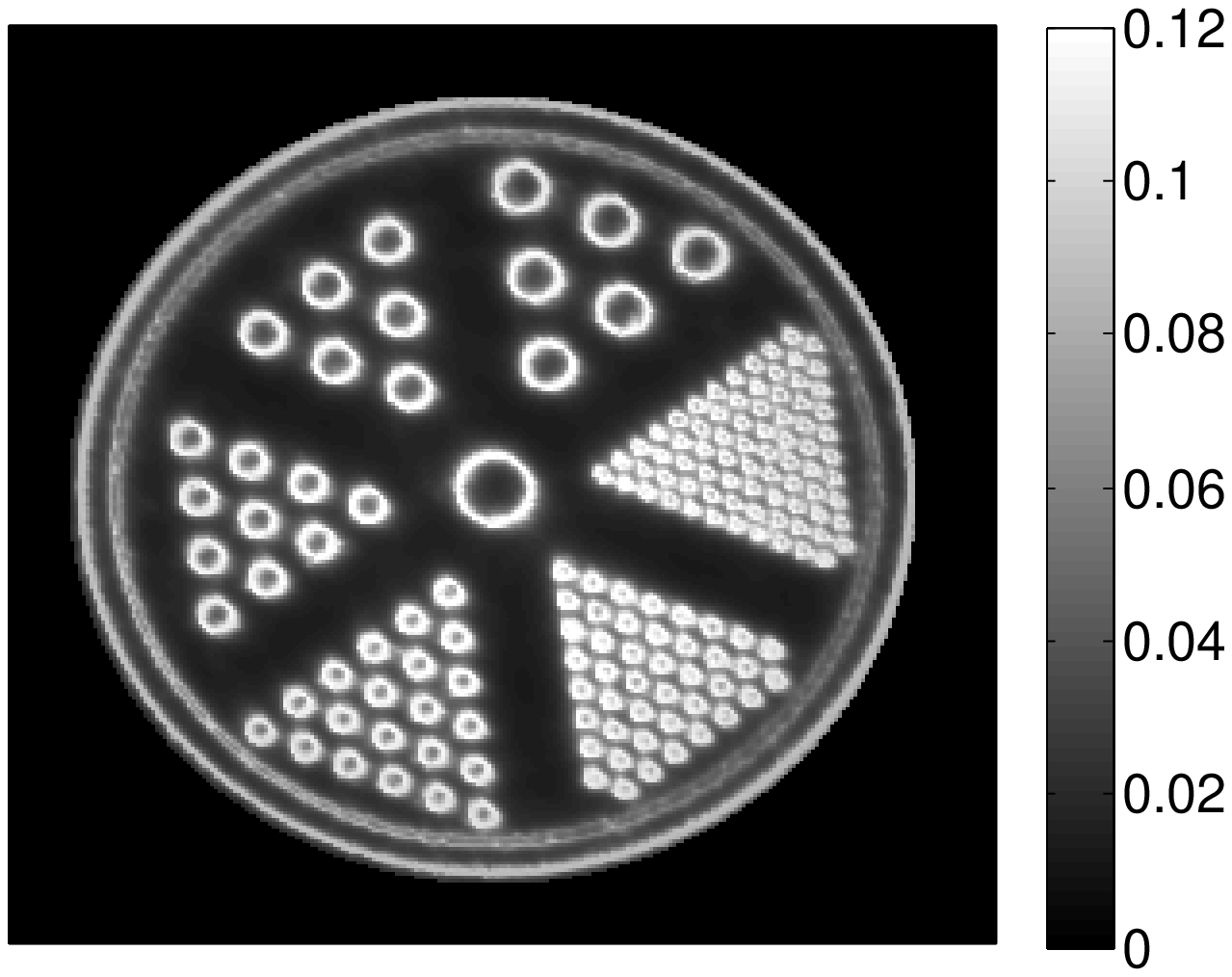} 
\label{fig_rods_var}} %
\end{center}
\caption{Reconstructed standard deviations (std) using VARD on real data. (a) squared root of hyperparameters $\bd{\gamma}$; (b) squared root of posterior variances $\bd{v}$.}
\label{fig_inv_rods_variance}
\end{figure}

\begin{figure}[t]
\centering
\subfigure[VARD - view sub. $\times 8$]{%
\includegraphics[width=6cm]{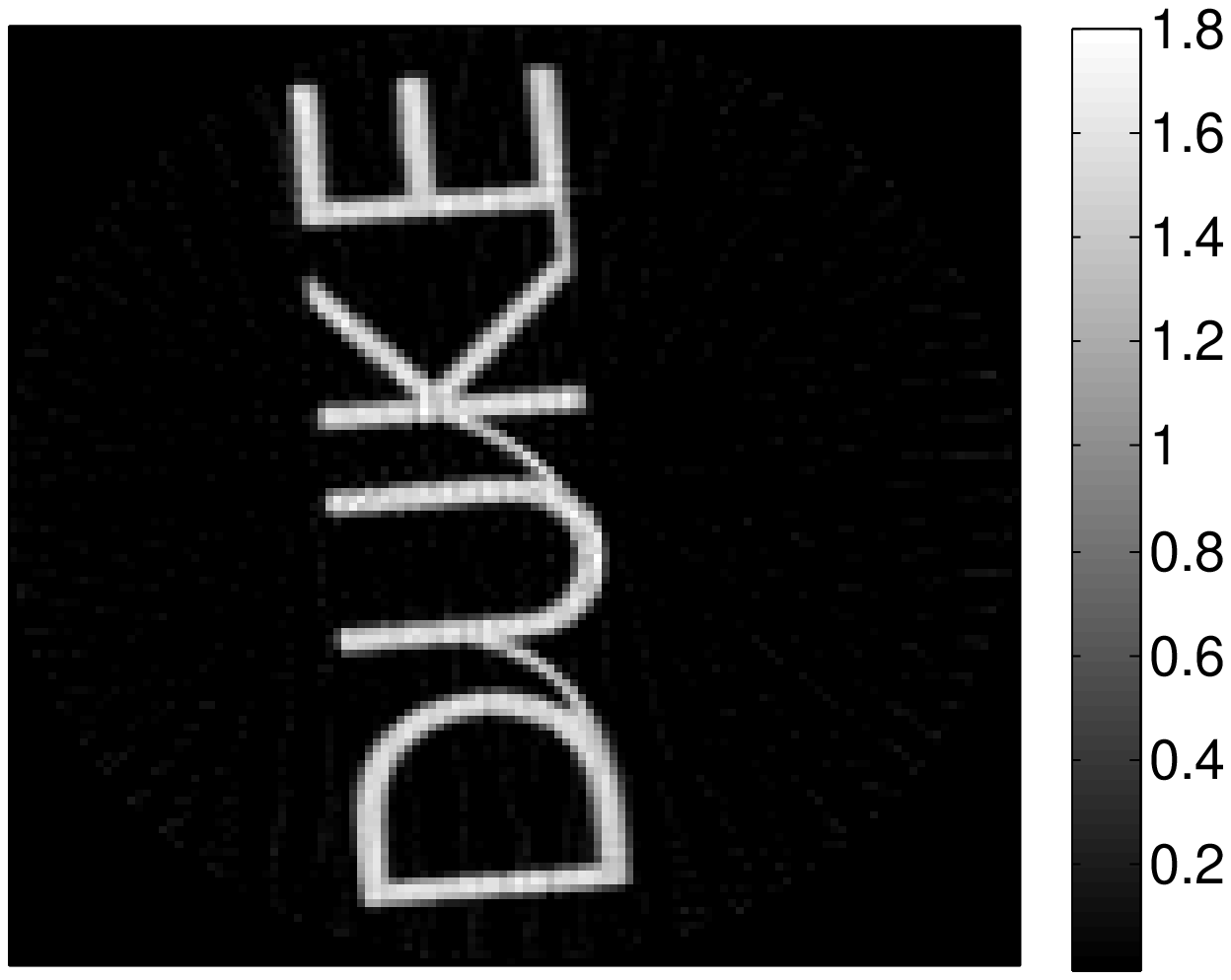}
\label{fig_Duke_mean_sub8}} %
\subfigure[VARD - view sub. $\times 16$]{%
\includegraphics[width=6cm]{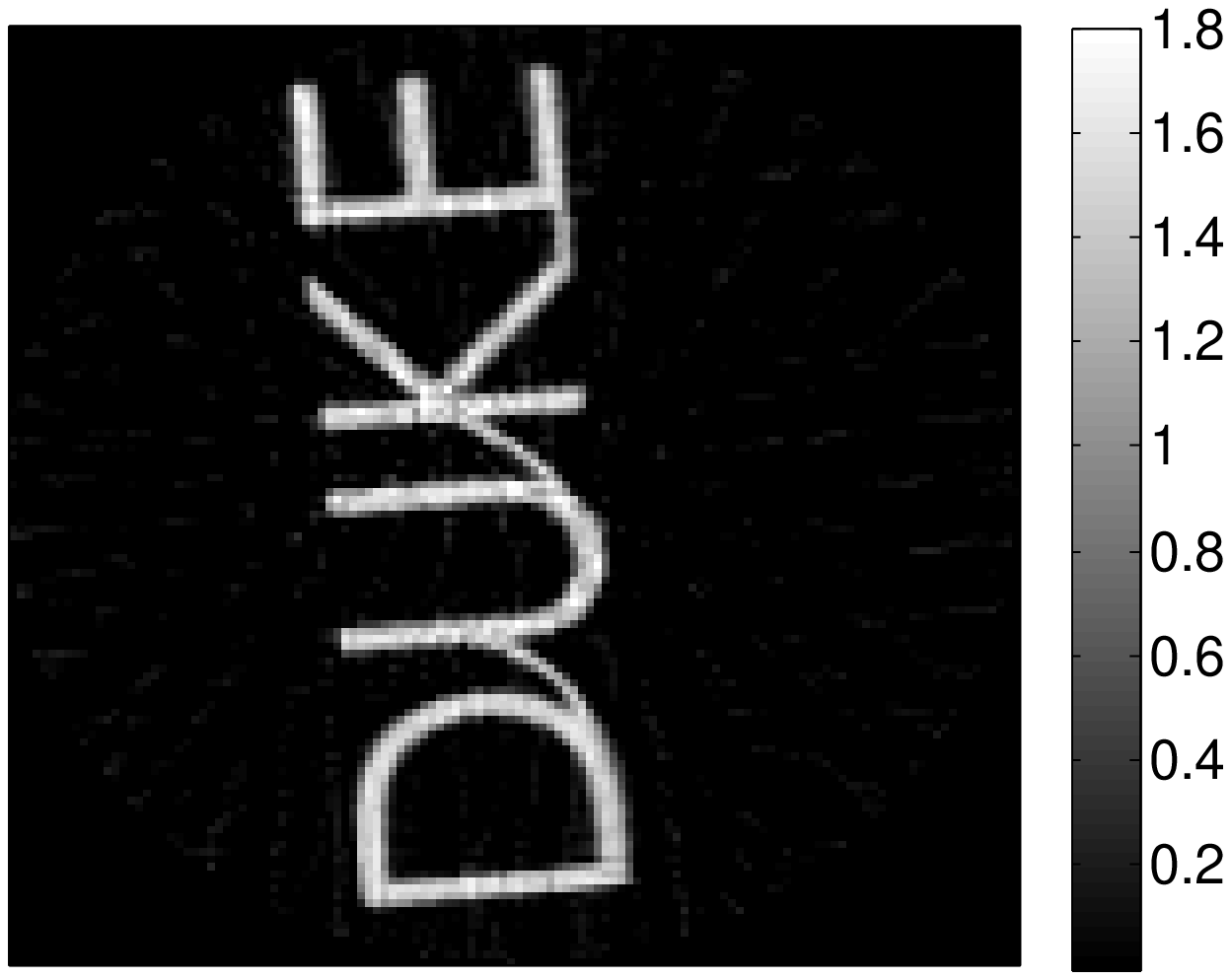} 
\label{fig_Duke_mean_sub16}} %
\\
\subfigure[FBP - view sub. $\times 8$]{%
\includegraphics[width=6cm]{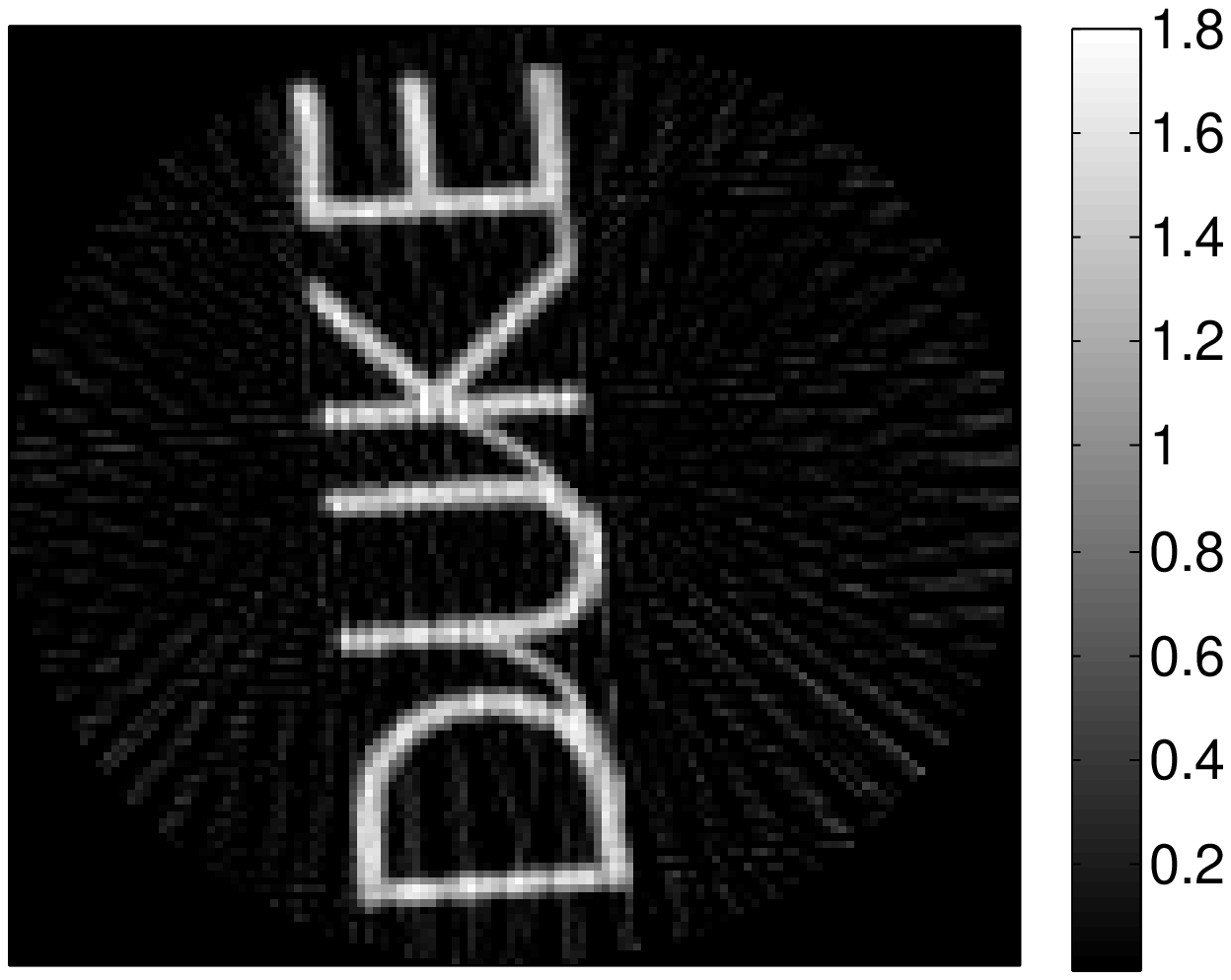}
\label{fig_Duke_FBP_sub8_full}} %
\subfigure[FBP - view sub. $\times 16$]{%
\includegraphics[width=6cm]{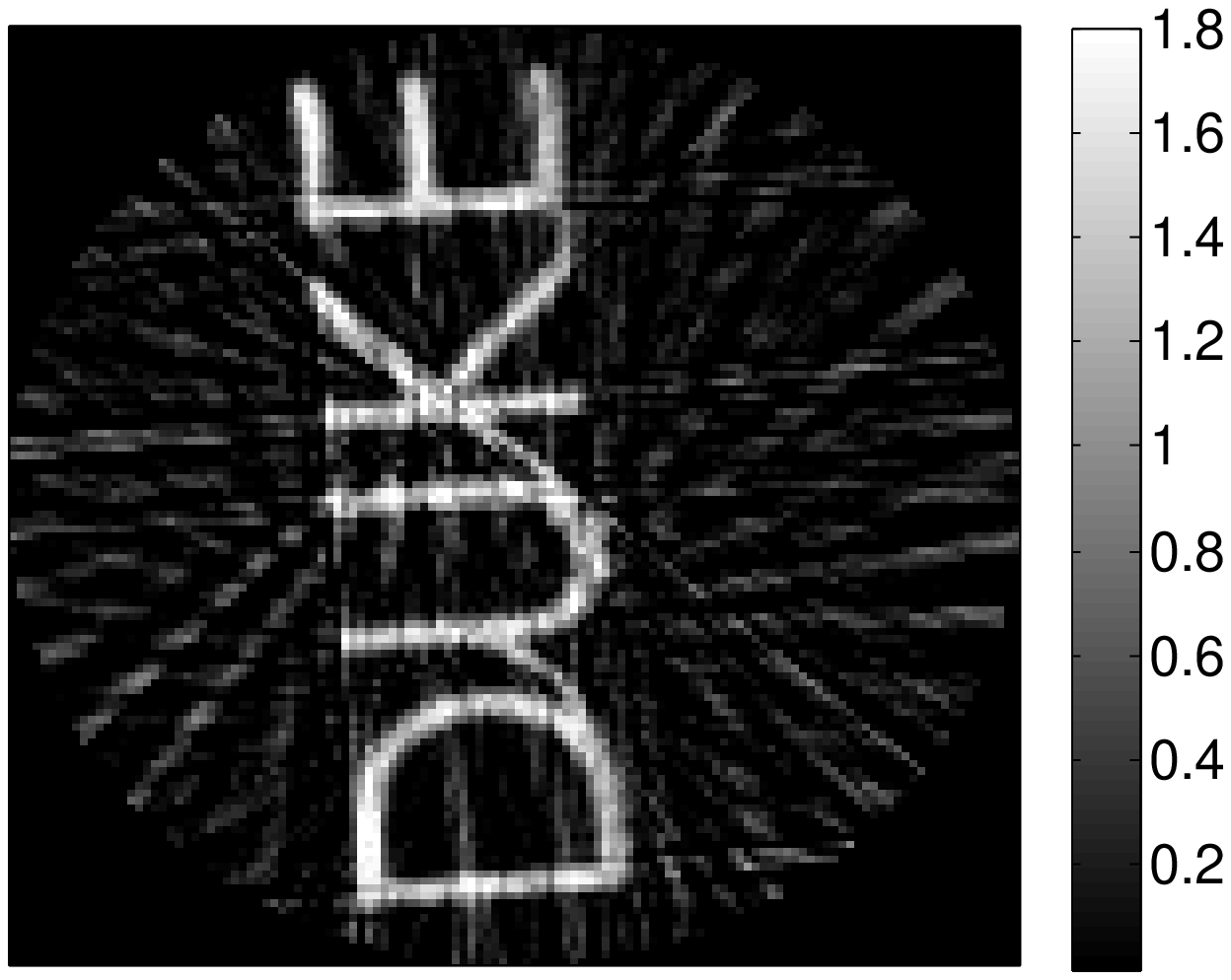}
\label{fig_Duke_FBP_sub16_full}} %
\caption{Reconstructions from real data. (a) VARD with sparsity promoted in the pixel basis and using $1/8$ of the views; (b) same as (a) but using $1/16$ of the views; (c) Filtered back projection using $1/8$ of the views; (d) same as in (c) but using $1/16$ of the views. For FBP, we used cubic interpolation for the backprojection, and a  cropped Ram-Lak (ramp) filter. Image resolution is $128\times 128$. The full dataset (before the view subsampling) was acquired using a full $360^\circ$ scan around the object with view-angle resolution of $1^\circ$ and $1573$ detector pixels per view. $510$ iterations were used for VARD.} 
\label{fig_duke} 
\end{figure}

\subsection{Implementation Details}
In our implementation, the 1D optimizations in lines~11--12 of Algorithm~1, line~9 of Algorithm~2 and Algorithm~3 are all done in parallel using MATLAB's vectorization. Since it is not necessary to solve the 1D surrogate problems exactly, we found that it is more efficient to use Newton's method and perform just one Newton step for each iteration.  However, for MAP this leads to divergence whenever the solution is in the linear regime of the smooth edge preserving penalty. Therefore, we use a 1D trust-region Newton method (see details in Appendix~\ref{apx_C}), which typically costs about $1-5$ Newton steps. For VARD we could use Newton steps for the mean updates, but had to resort to the trust-region method for the variance updates (see details in Appendix~\ref{apx_C}) to ensure decrease of the objective. For reweighted $\ell_2$ the Newton steps worked successfully. 
For the problem sizes considered here, we could store the system matrix and its transpose in memory using MATLAB's compressed sparse representation. The time for the forward/back-projections was negligible relative to the trust-region Newton updates due to MATLAB's highly optimized code for matrix-vector multiplications.  
However, this is certainty not a typical situation to be expected in large scale problems since it will not be possible to store the matrices in memory (even in sparse format) and ``on-the-fly'' implementations of the forward/back-projection operations would be required. Therefore, we do not report actual run time but instead discuss below the computational complexity per iteration for large scale problems. Details regarding the implementation of SBL can be found in Appendix~\ref{apx_B}.

%In our implementation, the run time was dominated by the trust-region updates, so both VARD and a single trial of MAP took the same time to converge, which was 40 minutes for the synthetic dataset (35 min for the acrylic glass phantom).  A single trial of reweighted $\ell_2$ took 20 minutes to converge. 

\subsection{Computational Complexity} \label{sec_comp}
In analyzing the computational complexity per iteration for large scale problems it is common to consider only the forward/back-projections. It is also usually the case that these operations dominate the execution time per iteration in large scale problems.  
The MAP and reweighted $\ell_2$ algorithms both have the same computational complexity, as they both require one forward-projection and one back-projection per iteration.
To compare VARD to the other algorithms, first note that a variance-type operation (corresponding to an elementwise squared system matrix) requires double the number of multiplications relative to a mean-type operation when the projections are performed via ray-tracing and computing line intersections.  If the mean-type and variance-type operations can be done in parallel then VARD requires the same number of forward/backprojections as MAP but it will have double the complexity of MAP due to the $\times 2$ larger number of multiplications in the variance-type operations. 
Also, if the computational resources are limited and the mean-type and variance-type operations can only be performed sequentially, then  VARD will be at least 3 times more expensive per iteration than a single MAP or reweighted $\ell_2$ trial.
However, in cases when the tuning parameters are unknown or better ones are needed, MAP-based methods require multiple trials to determine the values of the tuning parameters, while VARD requires a single run. In these cases, VARD can still offer a lower \emph{total} run-time than the alternative methods and it may be more convenient since it does not rely on human expert evaluation of the different images for different values of tuning parameters. 

Each iteration of SBL involves the solution of several $p\times p$ matrix equations using the conjugate gradient (CG) method \cite{Saad} in order to estimate the posterior variances (see details in Appendix~\ref{apx_B}) with one additional equation for computing the mean. The solution to each matrix equation requires multiple CG sub-iterations, each sub-iteration requiring one forward projection and one back-projection. Denoting $K$ as the number of matrix equations for the variance estimation, and denoting $I_m$, $I_v$ as the number of CG sub-iterations for the mean and variance parts, respectively, each iteration of SBL involves $K\times I_v+I_m$ forward/back-projections. The choice for $K$ and the typical values of $I_m,I_v$ depend on the specific system and object of interest. 
For the example in Sec.~\ref{sec_sim} we used $K=64$, and the chosen residual thresholds (see Appendix~\ref{apx_B}) lead to $I_v\approx 10$ and $I_m \approx 100$, resulting in 2 orders of magnitude more forward and back-projections per iteration than VARD.
  
\begin{figure}[h]
\centering
\subfigure[FBP - view sub. $\times 8$]{%
\includegraphics[width=6.3cm]{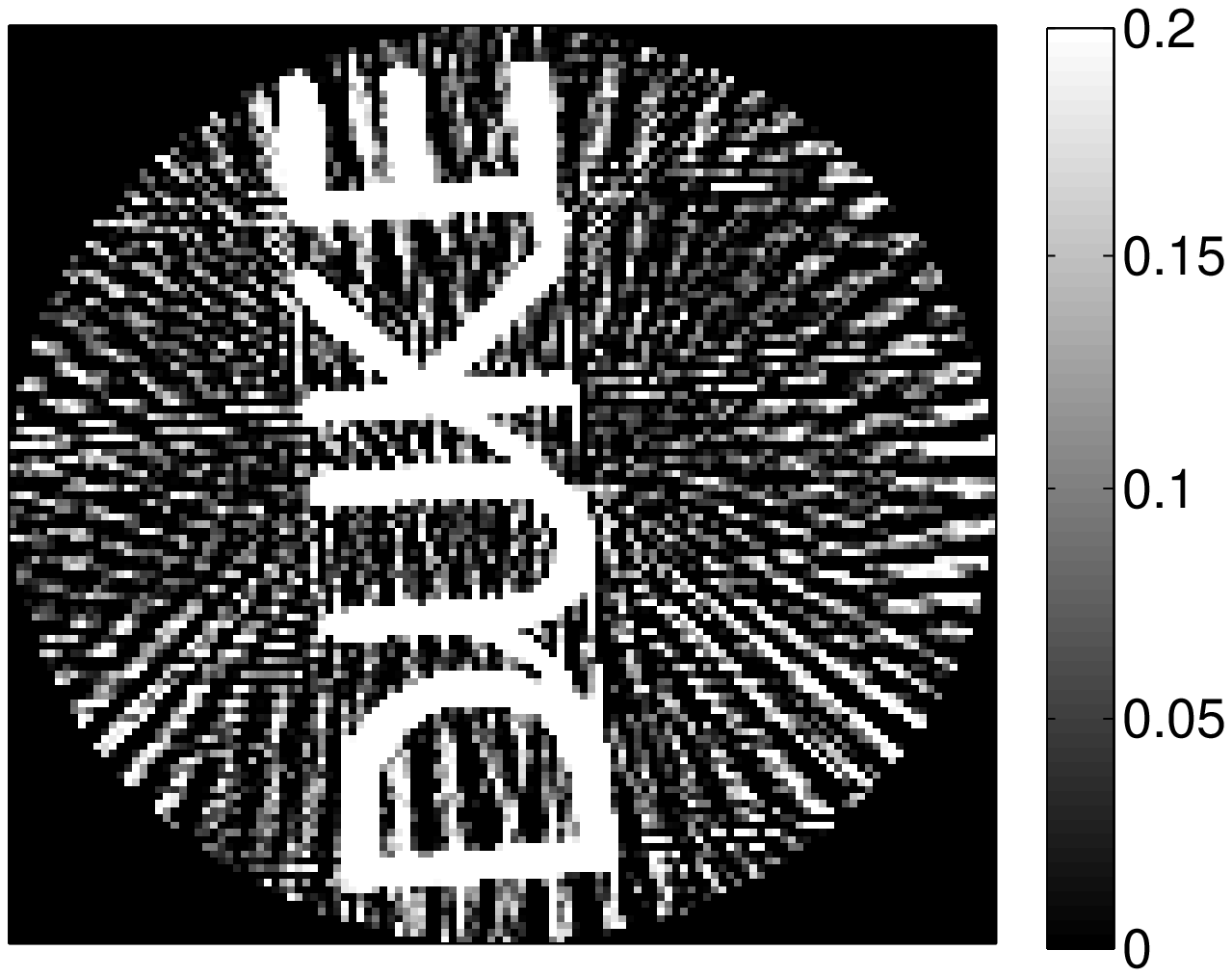}
\label{fig_Duke_FBP_sub8}} %
\subfigure[FBP - view sub. $\times 16$]{%
\includegraphics[width=6.3cm]{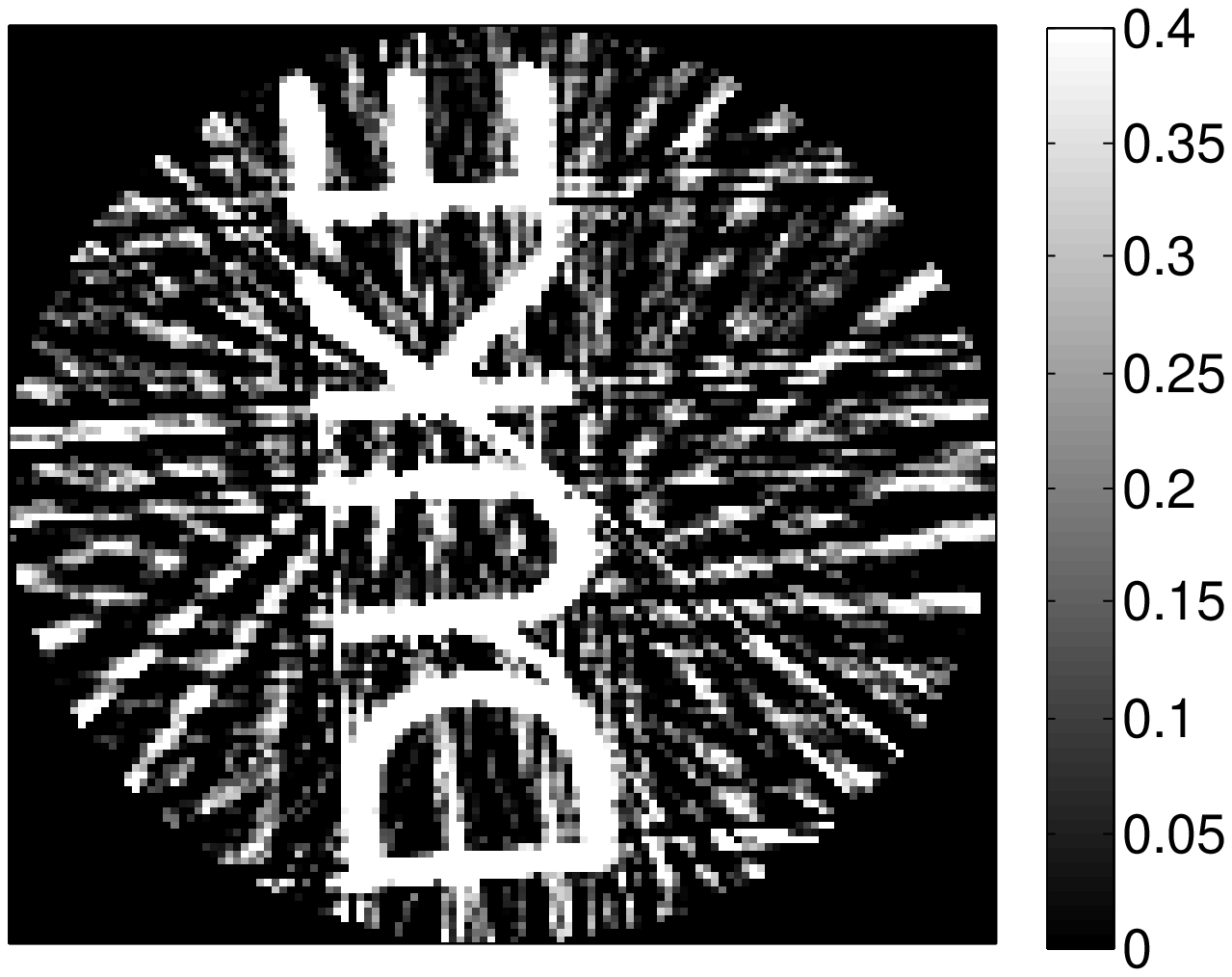}
\label{fig_Duke_FBP_sub16}} %
\\
\subfigure[VARD posterior std - view sub. $\times 8$]{%
\includegraphics[width=6.3cm]{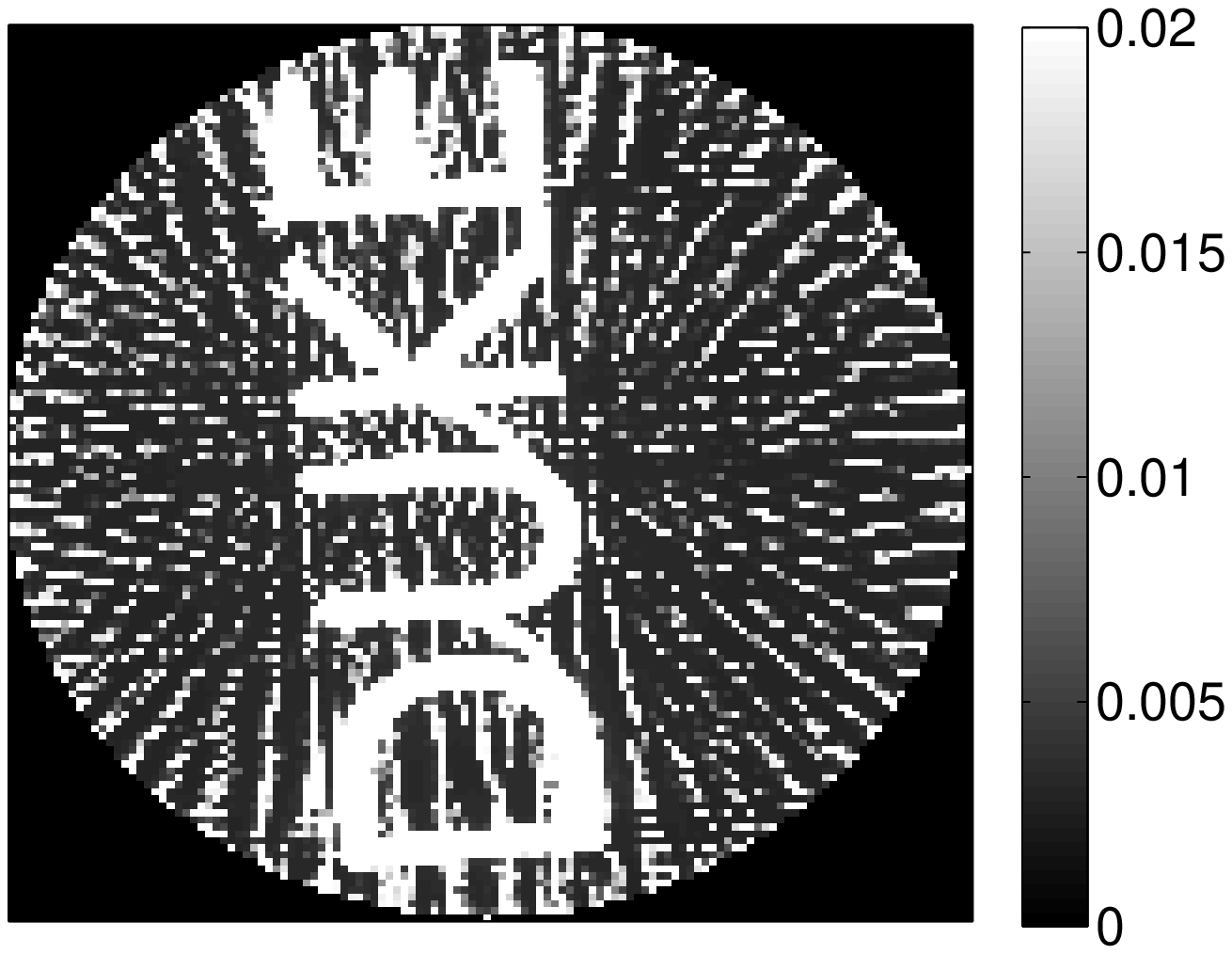} 
\label{fig_Duke_var_sub8}} %
\subfigure[VARD posterior std - view sub. $\times 16$]{%
\includegraphics[width=6.3cm]{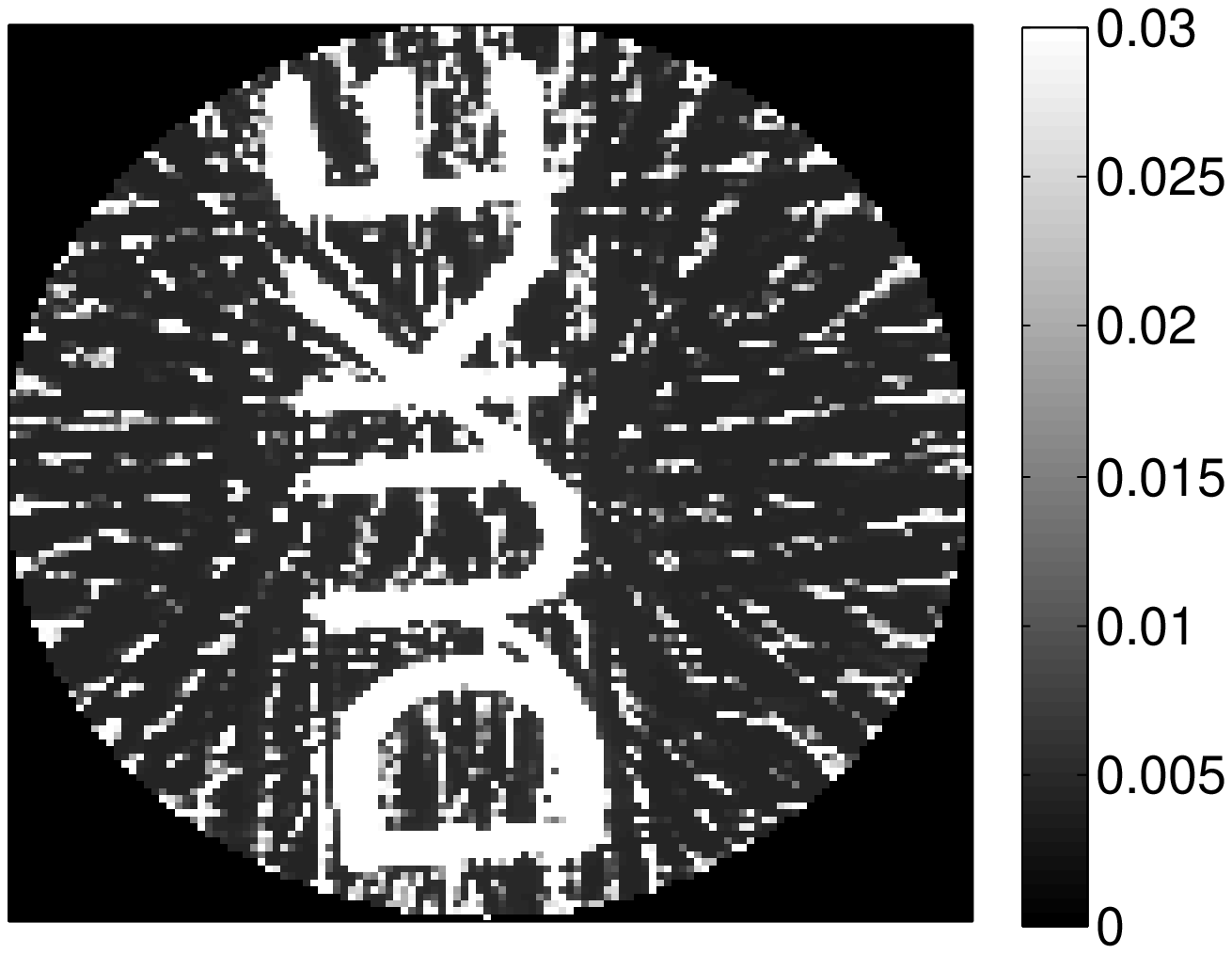} 
\label{fig_Duke_var_sub16}} %
\caption{Comparing artifacts in FBP to the posterior variances obtained via VARD. (a) Zoom-in view of Fig.~\ref{fig_Duke_FBP_sub8_full}; (b) Zoom-in view of Fig.~\ref{fig_Duke_FBP_sub16_full}; (c) VARD posterior std using $1/8$ of the  views; (d) same as (c) but using $1/16$ of the views.  Note the resemblance between the streaks in (a) and (c), and also (b) and (d).} 
\label{fig_duke2} 
\end{figure}

\section{Conclusions} \label{conc}
We presented a new framework for automatic relevance determination (ARD), which we call variational ARD (VARD). 
It provides an extension of previous ARD methods to Poisson noise models for transmission tomography and allows the use of neighborhood penalties that promote sparsity in the pixel/voxel difference domain. We revealed the sparsity-promoting mechanism in VARD, and established key differences compared to previous ARD methods that are applicable only to Gaussian noise models. In addition to computing the estimate for the imaged object, which is given by an approximation to the posterior mean, VARD also provides posterior variances, i.e., Bayesian confidence intervals.  
An important advantage of the VARD framework is that \emph{it avoids the cumbersome process of choosing tuning parameters, while still retaining good image quality}. We have shown that the posterior variances play a key role in learning the correct balance between the datafit and the prior. Straightforward simplifications that avoid using posterior variances, such as reweighted $\ell_2$, do not retain this auto-tuning property.

We presented a parallel AM algorithm for VARD (PAR-VARD) which is applicable to large scale problems. We studied a parallel implementation of VARD using both synthetic and real data. The numerical experiments demonstrate that the images produced by VARD are comparable in quality to the best maximum a posteriori (MAP) and reweighted $\ell_2$ estimates. However, VARD needs to be run only once, whereas MAP and reweighted $\ell_2$ need to be run multiple times whenever the tuning parameters are unknown or better ones are needed.
For MAP-based algorithms, it is possible to speed-up the search for tuning parameters by employing a continuation approach where the algorithm is initialized using the solution from a previous point in parameter space.  A more principled approach to determine the tuning parameters in MAP-based algorithms is cross-validation, which requires multiple runs for different subsets of the data and becomes computationally involved when the number of tuning parameters increases.

The unique features of VARD such as auto-tuning and computation of variances do not come without a price. We have shown that VARD has at least double computational complexity than a MAP-based algorithm for large scale problems. For commercial CT systems which are based on protocols with preset values for the tuning parameters, the auto-tuning feature of VARD might not be needed and would entail a longer reconstruction time.  However, we expect our method to be useful in non-standard situations when the preset values for the tuning parameters are not satisfactory and better values need to be found. 
In addition, auto-tuning can greatly simplify research when investigating non-standard geometries for which the tuning parameters might not be known. 
Exploring ways to reduce the computational burden of VARD so that is compares more favorably to MAP-based algorithms seems like an important direction for future work.

Nevertheless, we have considerably advanced the computational feasibility of ARD for large scale problems in this work. 
In the EM algorithm for ARD  \cite{tipping2001sparse} with a Gaussian noise model, the straightforward computation of the posterior variances scales cubically with the number of pixels/voxels, which is infeasible for large scale problems, so approximations are needed. The scalable ARD algorithm in \cite{Danniel} (SBL) which uses sophisticated approximations to the variances does not have convergence guarantees.  In contrast, the proposed VARD algorithm is feasibly for large-scale problems and also has \emph{guarantees for global convergence}. By comparing VARD to SBL,  we have shown that for low photon flux (high noise levels) VARD leads to better image quality than SBL, when the data is generated from the considered Poisson model. Furthermore, this is also \emph{true for high photon flux} (low noise levels), when the Gaussian approximation to the Poisson distribution is good enough, and seems to be due to the over-complete prior that can be used by VARD but cannot be used by SBL. We have shown that VARD is much more efficient computationally than SBL, requiring considerably less forward/back-projections (the main computational bottleneck), and VARD is also much simpler to implement. As opposed to previous ARD methods \cite{tipping2001sparse,Danniel}, the exact evaluation of the VARD objective is feasible for large-scale problems and can be used to assess convergence in practice and to verify the correctness of the implementation.  

%We provided a detailed study of the convergence properties of the parallel VARD algorithm and established \emph{guarantees for global convergence} whereas the previous scalable ARD method in \cite{Danniel} (SBL) is not guaranteed to converge. 
%Finally, VARD is computationally more efficient than SBL, requiring considerably less forward/back-projections, and it is also much simpler to implement.

We have shown that when the object is undersampled, the posterior variances computed by VARD can indicate regions in the image where sampling is insufficient. The variances can therefore be used to define relative levels of confidence in the reconstruction at particular regions in the image. 
The variances can also be used for adaptive sensing/experimental design \cite{Danniel,seeger2011large}. Our results suggest that the variances can be used for image segmentation or boundary detection. These potential uses are not studied here, and will be the subject of future work. 

Finally, we note that many algorithms for transmission tomography use ordered subsets (OS) to accelerate convergences while sacrificing guarantees for convergence \cite{sonka2000handbook,Erdogan,Keesing,Soysal}.  In this paper we have excluded the use of ordered subsets and focused on convergent algorithms in order to be consistent with the convergence theory presented in the paper. Initial experiments with OS for VARD appear to be promising but will be reported in a future publication.

%We have dexplored the use of  with PAR-VARD,  This led to considerable speed-ups, similar to the ones reported in the literature for other algorithms using OS. To focus on the algorithm presented in this paper we chose not to present OS results here. 

%Although not emphasized in this work, the reweighted  $\ell_2$ algorithm with separable surrogates presented in Sec.~\ref{connections_L2} (Algorithm~3) is also new and will be further studied in the future. 

%\clearpage

\section{Acknowledgments} 
We would like to thank Prof. Martin P. Tornai for providing us access to the Duke Multi-Modality Imaging Laboratory (MMIL) where the experimental data was acquired. We would like to thank Mr. Andrew Holmgren for conducting the experiments. 
We thank Dr. Ikenna Odinaka for suggesting the 1D trust-region  Newton method used in our code. 
We thank Dr. David Carlson for proofreading the manuscript and providing helpful comments.    
This work was supported by the Department of Homeland Security, Science and Technology Directorate, Explosives Division, through contract HSHQDC-11-C-0083.

\newpage
\Appendix

\appendix
\section{Proof of Theorem~\ref{theorem_dec}} \label{apx_A}
\begin{proof}
We split the difference in \eqref{B_diff} into two terms
\begin{align}
&\mathcal{F}(\bd{m}^{(t)},\bd{v}^{(t)},\bd{\gamma}^{(t)})-\mathcal{F}(\bd{m}^{(t+1)},\bd{v}^{(t+1)},\bd{\gamma}^{(t+1)})  =\mathcal{F}(\bd{m}^{(t)},\bd{v}^{(t)},\bd{\gamma}^{(t)})-\mathcal{F}(\bd{m}^{(t+1)},\bd{v}^{(t+1)},\bd{\gamma}^{(t)}) \nonumber \\
&+\mathcal{F}(\bd{m}^{(t+1)},\bd{v}^{(t+1)},\bd{\gamma}^{(t)})-\mathcal{F}(\bd{m}^{(t+1)},\bd{v}^{(t+1)},\bd{\gamma}^{(t+1)}). \label{F_diff}
\end{align}
First we focus on the first difference in \eqref{F_diff} and write it explicitly using \eqref{mu}-\eqref{b}
\begin{align}\label{B_diff2}
&\mathcal{F}(\bd{m}^{(t)},\bd{v}^{(t)},\bd{\gamma}^{(t)})-\mathcal{F}(\bd{m}^{(t+1)},\bd{v}^{(t+1)},\bd{\gamma}^{(t)}) = \sum_j b^y_j(m^{(t)}_j-m^{(t+1)}_j)+\sum_i\mu^{(t)}_i- \nonumber \\
& \sum_i\mu^{(t)}_i\exp{\bigg[-\sum_j \phi_{ij} (m^{(t+1)}_j-m^{(t)}_j) +\sum_j \phi_{ij}^2(v^{(t+1)}_j-v^{(t)}_{j})/2 \bigg]}+\nonumber \\
&\sum_k\frac{1}{2\gamma^{(t)}_k}\big[(\sum_j\psi_{kj}m^{(t)}_j)^2-(\sum_j\psi_{kj}m^{(t+1)}_j)^2\big]+\nonumber \\
& 
\sum_k\frac{1}{2\gamma^{(t)}_k}\big[\sum_j\psi^2_{kj}(v^{(t)}_j-v^{(t+1)}_j)\big]+\sum_j \log(v^{(t+1)}_j/v^{(t)}_j)/2. 
\end{align}

Next, we present several bounds for terms in \eqref{B_diff2}. Using \eqref{sum_qr}--\eqref{qr} and \eqref{b} we obtain
\begin{align}
\sum_i\mu^{(t)}_i \geq \sum_i\mu^{(t)}_i \sum_j\frac{\phi_{ij}+\phi^2_{ij}/2}{Z_1} = \sum_j \frac{b^{(t)}_j}{Z_1}+\frac{\tilde{b}^{(t)}_j}{Z_1}   \geq \sum_j \frac{b^{(t)}_j}{Z^{(t)}_j}+\frac{\tilde{b}_j^{(t)}}{Z_1},  
\end{align}
where the last inequality is based on the definition for $Z^{(t)}_j$ in \eqref{Z_tilde_m}.
We bound the exponential term in \eqref{B_diff2} using the convex decomposition lemma (lemma~\ref{lemma}) 
\begin{align}
&-\sum_i\mu^{(t)}_i\exp{\big(-\sum_j \phi_{ij} (m^{(t+1)}_j-m^{(t)}_j)+\sum_j \phi_{ij}^2(v^{(t+1)}_{j}-v^{(t)}_j)/2 \big)} \geq  \nonumber \\[-2ex] 
&\qquad -\sum_j \frac{b^{(t)}_j}{Z^{(t)}_j} \exp\big(-Z^{(t)}_j(m^{(t+1)}_j-m^{(t)}_j)\big)-\sum_j \frac{\tilde{b}^{(t)}_j}{Z_1} \exp\big(Z_1(v^{(t+1)}_j-v^{(t)}_j)\big), \label{bound_exp} 
\end{align}
where we used  \eqref{qr} with $Z_1$ replaced by $Z^{(t)}_j \geq  Z_1$ in $r_{ij}$ (lemma~\ref{lemma} with the dummy variable extension applied), and also used \eqref{b}. 
Using lemma~\ref{lemma} for the quadratic term in \eqref{B_diff2} we obtain
\begin{align}
&\sum_k\frac{1}{2\gamma^{(t)}_k}\big[(\sum_j\psi_{kj}m^{(t)}_j)^2-(\sum_j\psi_{kj}m^{(t+1)}_j)^2\big] \geq  -\sum_j \big[ f^{(t)}_j\,(m^{(t+1)}_j-m^{(t)}_j)- g^{(t)}_j\,(m^{(t+1)}_j-m^{(t)})^2\big], \label{bound_m} 
\end{align}
where the derivation is similar to the one in \eqref{S2_temp} and we also used \eqref{d_f}--\eqref{g_h}. 
Substituting \eqref{m_update}, \eqref{v_update} into the right hand sides of \eqref{bound_exp} and \eqref{bound_m} and then substituting the result into \eqref{B_diff2}, we obtain
\begin{align}
&\mathcal{F}(\bd{m}^{(t)},\bd{v}^{(t)},\bd{\gamma}^{(t)})-\mathcal{F}(\bd{m}^{(t+1)},\bd{v}^{(t+1)},\bd{\gamma}^{(t)}) \geq \nonumber \\
&   \sum_j \bigg \{ \frac{b^y_j}{Z^{(t)}_j} \log\big (\frac{\beta^{(t)}_j}{b^{(t)}_j} \big)+\frac{b^{(t)}_j}{Z^{(t)}_j}+\frac{\tilde{b}^{(t)}_j}{Z_1}-\frac{\beta^{(t)}_j}{Z^{(t)}_j}-\frac{\tilde{\beta}^{(t)}_j}{Z_1}+\frac{f^{(t)}_j}{Z^{(t)}_j} \log\big [\frac{\beta^{(t)}_j}{b^{(t)}_j} \big]  +\nonumber \\
& \frac{g^{(t)}_j}{Z^{(t)}_j} \log\big [\frac{\beta^{(t)}_j}{b^{(t)}_j} \big](m^{(t+1)}_j-m^{(t)}_j)-\frac{\xi^{(t)}_j}{2Z_1} \log\big [\frac{\tilde{\beta}^{(t)}_j}{\tilde{b}^{(t)}_j} \big]+\log(v^{(t+1)}_j/v^{(t)}_j)/2 \bigg \}. \label{B_diff_temp}
\end{align}
We also use 
\begin{align}
&(m^{(t+1)}_j-m^{(t)}_j)=2(\bar{m}^{(t)}_j-m^{(t)}_j)+(m^{(t+1)}_j-m^{(t)}_j+2(m^{(t)}_j-\bar{m}^{(t)}_j) )= \nonumber \\[-0.5ex]
&2(\bar{m}^{(t)}_j-m^{(t)}_j)+(\frac{2}{Z_1}-\frac{1}{Z^{(t)}_j})\log\big (\frac{\beta^{(t)}_j}{b^{(t)}_j}  \big) 
\geq 2(\bar{m}^{(t)}_j-m^{(t)}_j)+\frac{1}{Z^{(t)}_j}\log\big (\frac{\beta^{(t)}_j}{b^{(t)}_j} \big ), \label{m_diff}
\end{align}
where we have used \eqref{m_KKT} and \eqref{m_update}. Next, we use \eqref{beta_m} in \eqref{m_diff} to obtain
\begin{align}
(m^{(t+1)}_j-m^{(t)}_j) \geq (\beta^{(t)}_j -b^y_j-f^{(t)}_j)/g^{(t)}_j+\frac{1}{Z^{(t)}_j}\log\big (\frac{\beta^{(t)}_j}{b^{(t)}_j} \big )
\end{align}
and substituting into \eqref{B_diff_temp} we have
\begin{align}
&\mathcal{F}(\bd{m}^{(t)},\bd{v}^{(t)},\bd{\gamma}^{(t)})-\mathcal{F}(\bd{m}^{(t+1)},\bd{v}^{(t+1)},\bd{\gamma}^{(t)}) \geq  \frac{b^{(t)}_j}{Z^{(t)}_j}+\frac{\tilde{b}^{(t)}_j}{Z_1}-\frac{\beta^{(t)}_j}{Z^{(t)}_j}-\frac{\tilde{\beta}^{(t)}_j}{Z_1} +\nonumber \\
&g^{(t)}_j\bigg[ \frac{1}{Z^{(t)}_j}\log\big (\frac{\beta^{(t)}_j}{b^{(t)}_j} \big ) \bigg] ^2+ \frac{\beta^{(t)}_j}{Z^{(t)}_j} \log\big (\frac{\beta^{(t)}_j}{b^{(t)}_j} \big)-
\frac{\xi^{(t)}_j}{2Z_1} \log\big (\frac{\tilde{\beta}^{(t)}_j}{\tilde{b}^{(t)}_j} \big) +\frac{1}{2}\log\big (\frac{v^{(t+1)}_j}{v^{(t)}_j}\big) . \label{B_diff_temp2}
\end{align}

Finally, we use $\log(x)\geq 1 -1/x$ applied to the last term in \eqref{B_diff_temp2}, which gives
\begin{align}
\log(v^{(t+1)}_j/v^{(t)}_j) \geq \frac{1}{v^{(t+1)}_j} (v^{(t+1)}_j-v^{(t)}_j) =\frac{1}{v^{(t+1)}_j}\frac{1}{Z_1} \log\big (\frac{\tilde{\beta}^{(t)}_j}{\tilde{b}^{(t)}_j} \big), \label{bound_log_v}
\end{align}
where we used \eqref{v_update}. Substituting \eqref{bound_log_v} into \eqref{B_diff_temp2}, using \eqref{beta_v} and the definition of the I-divergence in \eqref{I_divergence}, we obtain 
\begin{align}
\mathcal{F}(\bd{m}^{(t)},\bd{v}^{(t)},\bd{\gamma}^{(t)})-\mathcal{F}(\bd{m}^{(t+1)},\bd{v}^{(t+1)},\bd{\gamma}^{(t)}) \geq &I\bigg (\frac{\bd{\beta}^{(t)}}{\bd{Z}^{(t)}}\big|\big|\frac{\bd{b}^{(t)}}{\bd{Z}^{(t)}}\bigg)+I\bigg (\frac{\tilde{\bd{\beta}}^{(t)}}{Z_1}\big|\big|\frac{\tilde{\bd{b}}^{(t)}}{Z_1}\bigg). \label{bound1}
\end{align}
Next, we calculate the second difference in \eqref{F_diff}
\begin{align}
&\mathcal{F}(\bd{m}^{(t+1)},\bd{v}^{(t+1)},\bd{\gamma}^{(t)})-\mathcal{F}(\bd{m}^{(t+1)},\bd{v}^{(t+1)},\bd{\gamma}^{(t+1)}) = \nonumber \\
&\frac{1}{2}\sum_k\big(\frac{1}{\gamma^{(t)}_k}-\frac{1}{\gamma^{(t+1)}_k}\big)\bigg [(\sum_j \psi_{kj} m^{(t+1)}_j )^2+\sum_j \psi^2_{kj}v^{(t+1)}_j     \bigg]
+\frac{1}{2}\sum_k\log\big(\frac{\gamma^{(t)}_k}{\gamma^{(t+1)}_k}\big). \label{F2_temp}
\end{align}
Substituting \eqref{sol_forward} into \eqref{F2_temp} we obtain
\begin{align}
&\mathcal{F}(\bd{m}^{(t+1)},\bd{v}^{(t+1)},\bd{\gamma}^{(t)})-\mathcal{F}(\bd{m}^{(t+1)},\bd{v}^{(t+1)},\bd{\gamma}^{(t+1)}) = \nonumber \\
&\frac{1}{2}\sum_k\log\big(\frac{\gamma^{(t)}_k}{\gamma^{(t+1)}_k}\big)+\frac{1}{2}\big(\frac{\gamma^{(t+1)}_k}{\gamma^{(t)}_k}-1\big)  \triangleq \frac{1}2{}D_{IS}(\bd{\gamma}^{(t)}||\bd{\gamma}^{(t+1)}), \label{bound2}
\end{align}
where we used the definition in \eqref{IS_divergence}. Combining the results in \eqref{bound1} and \eqref{bound2} we have
\begin{align}
&\mathcal{F}(\bd{m}^{(t)},\bd{v}^{(t)},\bd{\gamma}^{(t)})-\mathcal{F}(\bd{m}^{(t+1)},\bd{v}^{(t+1)},\bd{\gamma}^{(t+1)})  =\mathcal{F}(\bd{m}^{(t)},\bd{v}^{(t)},\bd{\gamma}^{(t)})-\mathcal{F}(\bd{m}^{(t+1)},\bd{v}^{(t+1)},\bd{\gamma}^{(t)})+ \nonumber \\
&\mathcal{F}(\bd{m}^{(t+1)},\bd{v}^{(t+1)},\bd{\gamma}^{(t)})-\mathcal{F}(\bd{m}^{(t+1)},\bd{v}^{(t+1)},\bd{\gamma}^{(t+1)}) \geq I\bigg (\frac{\bd{\beta}^{(t)}}{\bd{Z}^{(t)}}\big|\big|\frac{\bd{b}^{(t)}}{\bd{Z}^{(t)}}\bigg)+I\bigg (\frac{\tilde{\bd{\beta}}^{(t)}}{Z_1}\big|\big|\frac{\tilde{\bd{b}}^{(t)}}{Z_1}\bigg)+ \nonumber \\
&\sum_j g^{(t)}_j\bigg[ \frac{1}{Z^{(t)}_j}\log\big (\frac{\beta^{(t)}_j}{b^{(t)}_j} \big ) \bigg] ^2+\frac{1}{2}D_{IS}(\bd{\gamma}^{(t)}||\bd{\gamma}^{(t+1)})
\end{align}
which is the result stated in Theorem~\ref{theorem_dec}.
\end{proof}

\section{Implementation Details} \label{apx_C}
Next, we address the implementation of lines 11-12 in algorithm~1 which involve minimization of 1D surrogate functions. Since it is not necessary to find the exact solutions for the surrogate problems, we use Newton's method with a single Newton step when solving for the mean and using thresholding to impose positivity, i.e.,
\begin{align}
&m^{(t+1)}_j=\big [m^{(t)}_j-\big (\partial_{x} \mathcal{S}(x)/\partial^2_{x} \mathcal{S}(x)\big )\big|_{x=m^{(t)}_j}\big ]_+ = \big [ m^{(t)}_j-\big (b_j^y-b_j^{(t)}+f_j^{(t)})/(Z_1b_j^{(t)}+2g_j^{(t)})  \big]_+, \label{newton_m}
\end{align}
where $\partial_x$ denotes the derivative with respect to $x$, $\mathcal{S}$ denotes the surrogate in \eqref{surogate1}, and all parameters are defined in \eqref{b}, \eqref{qr},\eqref{d_f},\eqref{g_h}. While Newton steps as in \eqref{newton_m} are not guaranteed to decrease the objective function, we have found that in all our numerical experiments the objective function decreased at each iteration. 
If the  objective function is not monotonically decreasing, then the step size can be adjusted or a trust-region approach can be used, as described next for the variance updates.  

When solving for the variance, we use a Newton trust region approach \cite{sorensen} where Newton steps for $v_j$ are considered (similarly to \eqref{newton_m}) but the step sizes are adjusted such that the solution will lie within a trusted region. At each iteration, the trust region is adjusted from both left and right by comparing predicted reduction based on a local quadratic approximation and the actual reduction of the 1D surrogate function. If the prediction is poor, the trust region is shrunk. The left boundary is limited to the positive axis. This approach also ensures the decrease of the objective function at each step. 

For MAP with the smooth edge-preserving neighborhood penalties we found that a trust region approach is also required to guarantee convergence in cases where the linear part of the penalty dominates. For reweighted $\ell_2$, taking a single Newton step each iteration was sufficient.
% and the trust region method was not required. 

\section{Details about the SBL Algorithm} \label{apx_B}
We provide a short overview of the SBL algorithm in \cite{Danniel}. For further details please see \cite{tipping2001sparse,Danniel}.
A commonly used alternative to the Poisson noise model in \eqref{poisson} is the post-log Gaussian noise model given by 
\begin{align}
   \tilde{y}_i=\log(\eta_i/y_i), \qquad \qquad  p(\bd{\tilde{y}}|\bd{x})=\prod_{i=1}^n p(\tilde{y}_i|\bd{x})=\prod_{i=1}^n\cN(\tilde{y}_i|\bd{\phi}^T_i\bd{x},y^{-1}_i),   \label{Gauss}
\end{align}
where $\tilde{\bd{y}}$ denotes the post-log calibrated measurements. Note that the variance is inversely proportional to the data $y_i$.  
The Gaussian model in \eqref{Gauss} can approximate the Poisson model in \eqref{poisson} quite well for moderate to high photon flux.
As described in Sec.~\ref{ARD}, the solution to \eqref{evidence_max} under the Gaussian model in \eqref{Gauss} can be found using an expectation maximization (EM) algorithm \cite{tipping2001sparse}. In the expectation (E) step, one uses the conjugate gradient method \cite{Saad} to solve the following matrix equation for the posterior mean $\bd{m}$ \cite{Danniel}
\begin{align} 
\bd{P}\bd{m}=\bd{\Phi}^T\bd{B}\tilde{\bd{y}}, \label{eq_mean}
\end{align} 
where $\bd{P}$ is a $p\times p$ matrix, defined as
\begin{align}
\bd{P}=\bd{\Phi}^T\bd{B}\bd{\Phi}+\bd{\Psi}^T\bd{\Gamma}^{-1}\bd{\Psi}, \label{P_prob}
\end{align}
with $\bd{B}=\text{Diag}(\bd{y})$ and $\bd{\Gamma}=\text{Diag}(\bd{\gamma})$ ($\bd{\gamma}$ and $\bd{\Psi}$ are defined in \eqref{prior}). Another operation required for the E-step is the computation of the diagonal of the covariance matrix \cite{Danniel}
\begin{align}
\bd{\Sigma}=\bd{\Psi}^T\bd{P}^{-1}\bd{\Psi}, \label{Sigma_prob}
\end{align}
which direct computation has $O(p^3)$ complexity ($p$=number of pixels/voxels) due to the matrix inverse, which is not practical. To resolve this issue, Jeon et al. \cite{Danniel} used an estimator for the diagonal of the covariance matrix based on a method proposed by Tang and Saad \cite{Tang}. The general idea is to use the conjugate gradient (CG) method \cite{Saad} to solve the following matrix equations for $\bd{x}_k$
\begin{align}
\bd{P}\bd{x}_k=\bd{\Psi}^T \bd{q}_k, \qquad k=1,2,.....,K \label{probe}
\end{align}
where $\bd{P}$ is given in \eqref{P_prob} and $\bd{q}_k$ are user-specified ``probing vectors'', and then use all the solutions $\bd{x}_k$ $(k=1,2,...,K)$ to construct an estimator for the diagonal of $\bd{\Sigma}$ in \eqref{Sigma_prob} in a manner described in \cite{Danniel}.  
Since the CG method only requires the multiplication of the matrix $\bd{P}$ with some vector, the computational complexity is reduced from $O(p^3)$ to $O(n p^{1/D})$ ($D=2,3$ for 2D/3D images, respectively). 
Here we used the probing vectors that are based on the graph coloring scheme in \cite{Danniel}. Note that the structure and number of probing vectors required for some given accuracy depend on the properties of the matrix $\bd{P}$, so they are system and object dependent. 
%The most expensive part of the E-step is the solution of the equations in \eqref{probe}. 
In the M-step, $\bd{\gamma}$ is updated as
\begin{align}
\bd{\gamma}=[(\bd{\Psi}\bd{m})\odot (\bd{\Psi}\bd{m}) + \text{diag}(\bd{\Sigma})]^{-1},
\end{align}
where the inverse is performed elementwise. The E and M steps are repeated till convergence. 

We define the residuals for the matrix equations in \eqref{eq_mean} and \eqref{probe} as $R_m=\|\bd{P}\bd{m}-\bd{\Phi}^T\bd{B}\tilde{\bd{y}} \|_2$ and $R_v=\|\bd{P}\bd{x}_k-\bd{\Psi}^T \bd{q}_k\|_2$, respectively, for a given $\bd{m}$ and $\bd{x}_k$ ($k=1,2.,...,K$).
The CG iterations for \eqref{eq_mean} and \eqref{probe} are stopped  when $R_m\leq \varepsilon_m$ and $R_v\leq \varepsilon_v$, where $\varepsilon_m$ and $\varepsilon_v$ are thresholds for the residuals.
In the example of Sec.~\ref{sec_sim}, we performed several trials with different choices for $K$ and for the thresholds $\varepsilon_m$ and $\varepsilon_v$. Based on the outcomes of these trials, we chose $K=64$, $\varepsilon_m=10^{-8}$, and $\varepsilon_v=10^{-2}$. Note that decreasing $\varepsilon_v$ had negligible effect on the accuracy of the mean $\bd{m}$. Increasing $\varepsilon_m$ led to significantly larger error in $\bd{m}$ in later iterations.

%  in the example presented in Sec.~\ref{sec_sim} we performed several trails with different choices of $K$ and $I$; finally we used $K=64$ and 
%residual of $10^{-2}$ during the variance estimation and a residual of $10^{-8}$ for the mean (see Appendix~\ref{apx_B} for the definitions of the residuals and details regarding their choice)
%

\section{Details for the Experimental Lab Setup} \label{apx_D}
The real data has been acquired at the Duke Multi-Modality Imaging Lab (MMIL) \cite{crotty2007}. The x-ray source specifications are $60$kVp, $50$mA, and $25$ms.  A $0.55$mm Cerium filter was used to strongly filter the Bremsstrahlung (continuous spectrum x-rays) and create a pseudo monoenergetic source spectrum. The detector has a pixel pitch of $127 \mu m$  and a fill-factor of $0.75$ (pixel pitch divided by pixel size). 
%Detector measurements were software binned into $2\times 2$ pixels to increase the number of photon counts per pixel. 
By integrating the flux-energy curve computed using the XSPECT software \cite{dodge2008rapid} and accounting for detector fill-factor, we estimated a total of  $\eta=2\times 10^{3}$ photons per detector pixel. The view angle resolution is $1^\circ$.

\end{document}